\documentclass[10pt]{amsart}
\usepackage{amssymb}
\usepackage{bm}
\usepackage{graphicx}
\usepackage[centertags]{amsmath}
\usepackage{amsfonts}
\usepackage{amsthm}
\usepackage{amsbsy}
\usepackage{mathtools}
\usepackage{mathrsfs}
\usepackage{cases}
\usepackage[all]{xy}
\usepackage{hyperref}
\linespread{1.18}

%-----------------------THEOREM Environment-------------------------%

\newtheorem{thm}{Theorem}[section]
\newtheorem{cor}[thm]{Corollary}
\newtheorem{lem}[thm]{Lemma}

\newtheorem{prop}[thm]{Proposition}
\newtheorem{claim}[thm]{Claim}

\newtheorem{defn}[thm]{Definition}
\theoremstyle{definition}

%-----
\newcommand{\rr}{\mathbb{R}}
\newcommand{\nn}{\mathbb{N}}
\newcommand{\ee}{\varepsilon}

\newcommand{\meg}{\geqslant}
\newcommand{\mik}{\leqslant}
\newcommand{\ave}{\mathbb{E}}

\newcommand{\calb}{\mathcal{B}}

\newcommand{\calg}{\mathcal{G}}
\newcommand{\calh}{\mathcal{H}}
\newcommand{\cals}{\mathcal{S}}

\newcommand{\bbx}{\boldsymbol{X}}
\newcommand{\bmu}{\boldsymbol{\mu}}
\newcommand{\bbs}{\boldsymbol{\Sigma}}
\newcommand{\bnu}{\boldsymbol{\nu}}
\newcommand{\bpsi}{\boldsymbol{\psi}}

\newcommand{\bx}{\mathbf{x}}

\newcommand{\bfu}{\mathbf{f}}

%-----

\begin{document}

\title{$L_p$ regular sparse hypergraphs: box norms}

\author{Pandelis Dodos, Vassilis Kanellopoulos and Thodoris Karageorgos}

\address{Department of Mathematics, University of Athens, Panepistimiopolis 157 84, Athens, Greece}
\email{pdodos@math.uoa.gr}

\address{National Technical University of Athens, Faculty of Applied Sciences,
Department of Mathematics, Zografou Campus, 157 80, Athens, Greece}
\email{bkanel@math.ntua.gr}

\address{Department of Mathematics, University of Athens, Panepistimiopolis 157 84, Athens, Greece}
\email{tkarageo@math.uoa.gr}

\thanks{2010 \textit{Mathematics Subject Classification}: 05C35, 05C65,  46B20, 46B25.}
\thanks{\textit{Key words}: sparse hypergraphs, Gowers box norms.}

%------------------------Abstract-------------------------------%

\begin{abstract}
We consider some variants of the Gowers box norms, introduced by Hatami, and show their relevance in the context of sparse hypergraphs.
Our main results are the following. Firstly, we prove a generalized von Neumann theorem for $L_p$ graphons. Secondly, we give natural examples
of pseudorandom families, that is, sparse weighted uniform hypergraphs which satisfy relative versions of the counting and removal lemmas.
\end{abstract}

\maketitle

%--------------------------Introduction-----------------------%

\section{Introduction}

\numberwithin{equation}{section}

\subsection{Overview}

Let $\langle (X_i,\Sigma_i,\mu_i):i\in e\rangle$ be a nonempty finite family of probability spaces and let
$(\bbx_e,\bbs_e,\bmu_e)$ denote their product. Recall that the \textit{box norm} of a random variable $f\colon \bbx_e\to\rr$ is the quantity
\begin{equation} \label{e1.1}
\|f\|_{\square(\bbx_e)}\coloneqq \ave\Big[ \prod_{\omega\in\{0,1\}^e} f(\bx^{(\omega)}_e)\, \Big|\, \bx^{(0)}_e,\bx^{(1)}_e\in\bbx_e\Big]^{1/2^{|e|}}.
\end{equation}
(For unexplained notation see Subsection 1.2 below.) These norms were introduced by Gowers \cite{Go1,Go3} and are important tools
in arithmetic and extremal combinatorics. There are some slight variants of the box norms which first appeared\footnote{Actually,
the framework in \cite{Ha1,Ha2} is more general and includes several other variants of \eqref{e1.1}.} in \cite{Ha1,Ha2}: for every even
integer $\ell\meg 2$ we define the \textit{$\ell$-box norm} of $f\colon \bbx_e\to\rr$ by the rule
\begin{equation} \label{e1.2}
\|f\|_{\square_\ell(\bbx_e)}\coloneqq \ave\Big[ \prod_{\omega\in\{0,\dots,\ell-1\}^e} f(\bx^{(\omega)}_e)\, \Big|\,
\bx^{(0)}_e,\dots,\bx^{(\ell-1)}_e\in\bbx_e\Big]^{1/\ell^{|e|}}.
\end{equation}
Clearly, the $\square_2(\bbx_e)$-norm coincides with the $\square(\bbx_e)$-norm. As the parameter $\ell$ increases, the
quantity $\|f\|_{\square_\ell(\bbx_e)}$ also increases and depends on the integrability properties of $f$. In particular,
for bounded functions all these norms are essentially equivalent (see \cite[Proposition A.1]{DK}), but for unbounded functions
they behave quite differently.

The starting point of this paper is the observation that the $\ell$-box norms can serve as the proper higher-complexity\footnote{Note,
here, that if $s, r$ are positive integers with $s>r$, then there is no analogue of the Gowers $U^s$-norm for $r$-uniform hypergraphs.}
analogues of the box norms in the context of sparse hypergraphs and related structures. A strong indication which supports this point
of view is that the Gowers--Cauchy--Schwarz inequality also holds for the $\square_\ell(\bbx_e)$-norms. This fact together with several
elementary properties are discussed in Section 2.

The rest of this paper is devoted to the proof of our main results which use the $\ell$-box norms in an essentially way
(further examples showing the relevance of these norms are given in \cite{DK}). In Section~3 we present a version of the
\textit{generalized von Neumann theorem} for $L_p$ graphons (Theorem \ref{t3.1} in the main text); as we shall discuss in more
detail in Section 3, the main point in this result is that it can be applied to $L_p$ graphons for any $p>1$. The second part of
this paper deals with \emph{pseudorandom families} \cite{DKK}, a class of sparse weighted uniform hypergraphs whose most important
feature is that they satisfy relative versions of the counting and removal lemmas. Their definition is recalled in Section 4,
but for a more complete discussion of their properties we refer the reader to \cite{DKK}. We present two different types of examples
of pseudorandom families (see Theorems~\ref{t4.2}~and~\ref{t4.3} in the main text). They both can be seen as deviations (in an $L_p$-sense)
of hypergraphs which satisfy the linear forms condition, a well-known pseudorandomness condition originating from \cite{GT1}.

\subsection{Background material}

Our general notation and terminology is standard. By $\nn=\{0,1,\dots\}$ we denote the set of all natural numbers. As usual,
for every positive integer $n$ we set $[n]\coloneqq \{1,\dots,n\}$. If $f$ is an integrable real-valued random variable defined on
a probability space $(X,\Sigma,\mu)$, then by $\ave[f(x)\, |\, x\in X]$ we shall denote the expected value of $f$; if the sample space
$X$ is understood from the context, then the expected value of $f$ will be denoted simply by $\ave[f]$. All necessary background from
probability theory needed in this paper can be found, e.g., in \cite{Bi}.

As we have noted, the box norms and their variants are associated with finite products of probability spaces. It is more convenient,
however, to work with the following more general structures.
\begin{defn}[\cite{Tao1}] \label{d1.1}
A \emph{hypergraph system} is a triple
\begin{equation} \label{e1.3}
\mathscr{H}=(n,\langle(X_i,\Sigma_i,\mu_i):i\in [n]\rangle,\calh)
\end{equation}
where $n$ is a positive integer, $\langle(X_i,\Sigma_i,\mu_i): i\in [n]\rangle$ is a finite sequence of probability spaces and $\calh$
is a hypergraph on $[n]$. If\, $\calh$ is $r$-uniform, then $\mathscr{H}$ will be called an \emph{$r$-uniform hypergraph system}.
\end{defn}
For every hypergraph system $\mathscr{H}=(n,\langle(X_i,\Sigma_i,\mu_i):i\in [n]\rangle,\calh)$ by $(\bbx,\bbs,\bmu)$ we shall denote
the product of the spaces $\langle (X_i,\Sigma_i,\mu_i): i\in [n]\rangle$. More generally, let $e\subseteq [n]$ be nonempty and let
$(\bbx_e,\bbs_e,\bmu_e)$ denote the product of the spaces $\langle (X_i,\Sigma_i,\mu_i): i\in e\rangle$. (By convention, we set
$\bbx_{\emptyset}$ to be the empty set.) The $\sigma$-algebra $\bbs_e$ is not comparable with $\bbs$, but it can be ``lifted"
to $\bbx$ by setting
\begin{equation} \label{e1.4}
\calb_e=\big\{ \pi^{-1}_e(\mathbf{A}): \mathbf{A}\in\bbs_e\big\}
\end{equation}
where $\pi_e\colon \bbx\to\bbx_e$ is the natural projection. Observe that if $f\in L_1(\bbx,\calb_e,\bmu)$, then
there exists a unique random variable $\bfu\in L_1(\bbx_e,\bbs_e,\bmu_e)$ such that
\begin{equation} \label{e1.5}
f=\bfu\circ \pi_e
\end{equation}
and note that the map $L_1(\bbx,\calb_e,\bmu)\ni f\mapsto \bfu\in L_1(\bbx_e,\bbs_e,\bmu_e)$ is a linear isometry.
We will also deal with products of the space $(\bbx_e,\bbs_e,\bmu_e)$. Specifically, let $\ell\in\nn$ with $\ell\meg 2$.
For every $\bx_e^{(0)}=(x_i^{(0)})_{i\in e},\dots,\bx_e^{(\ell-1)}=(x_i^{(\ell-1 )})_{i\in e}$ in $\bbx_e$ and every
$\omega=(\omega_i)_{i\in e}\in\{0,\ldots,\ell-1\}^e$ we set
\begin{equation} \label{e1.6}
\bx_e^{(\omega)}=(x_i^{(\omega_i)})_{i \in e}\in\bbx_e.
\end{equation}
Notice that if $\omega=m^e$ for some $m\in\{0,\dots,\ell-1\}$ (that is, $\omega=(\omega_i)_{i\in e}$ with $\omega_i=m$ for every $i\in e$),
then $\bx_e^{(\omega)}=\bx_e^{(m)}$.

%-------------------------$\ell$-box norms--------------------%

\section{$\ell$-box norms}

\numberwithin{equation}{section}

In this section we will present several elementary properties of the $\ell$-box norms. We will follow the exposition in \cite[~Appendix~B]{GT2}
quite closely. In what follows, let $\mathscr{H}=(n,\langle(X_i,\Sigma_i,\mu_i):i\in [n]\rangle,\calh)$ denote a hypergraph system.

\subsection{Basic properties}

Let $e\subseteq [n]$ be nonempty and let $\ell\meg 2$ be an even integer. Also let $f\in L_1(\bbx_e,\boldsymbol{\Sigma}_e,\bmu_e)$.
We first observe that the $\ell$-box norm of $f$ can be recursively defined as follows. If $e=\{j\}$ is a singleton, then by \eqref{e1.2} we have
\begin{equation} \label{e2.1}
\| f\|_{\square_\ell(\bbx_e)}=\ave\Big[\prod_{\omega=0}^{\ell-1}f(x_j^{(\omega)})\,\Big|\, x^{(0)}_j,\dots, x^{(\ell-1)}_j\in X_j\Big]^{1/\ell^{|e|}}
\!\!\!\!= \big(\ave[f]^\ell\big)^{1/\ell}\!\!=|\ave[f]|.
\end{equation}
On the other hand, if $|e|\meg 2$, then for every $j\in e$ we have
\begin{equation} \label{e2.2}
\|f\|_{\square_\ell(\bbx_e)}= \ave\Big[\big\|\prod_{\omega=0}^{\ell-1}f(\,\cdot\, ,x^{(\omega)}_j)\big\|^{\ell^{|e|-1}}_{\square_\ell(\bbx_{e\setminus\{j\}})}
\,\Big|\, x_j^{(0)},\dots, x_j^{(\ell-1)}\in X_j\Big]^{1/\ell^{|e|}}.
\end{equation}
We have the following proposition.
\begin{prop} \label{p2.1}
Let $e\subseteq [n]$ be nonempty and let $\ell\meg 2$ be an even integer.
\begin{enumerate}
\item[(a)] \emph{$($Gowers--Cauchy--Schwarz inequality$)$} For every $\omega\in\{0,\dots,\ell-1\}^e$
let $f_\omega~\in~L_1(\bbx_e,\boldsymbol{\Sigma}_e,\bmu_e)$. Then we have
\begin{equation} \label{e2.3}
\Big| \ave\Big[\prod_{\omega\in\{0,\dots,\ell-1\}^e} f_\omega(\bx_e^{(\omega)})\,\Big|\, \bx_e^{(0)},\dots,\bx_e^{(\ell-1)}\in\bbx_e\Big]\Big|
\mik \!\! \prod_{\omega\in \{0,\dots,\ell-1\}^e}\!\!\!\|f_\omega\|_{\square_\ell(\bbx_e)}.
\end{equation}
\item[(b)] Let $f\in L_1(\bbx_e,\boldsymbol{\Sigma}_e,\bmu_e)$. Then we have $|\ave[f]|\mik\|f\|_{\square_\ell(\bbx_e)}$. Moreover, if\,
$\ell_1\mik \ell_2$ are even positive integers, then $\|f\|_{\square_{\ell_1}(\bbx_e)}\mik \|f\|_{\square_{\ell_2}(\bbx_e)}$.
\item[(c)] If\, $|e|\meg 2$, then $\|\cdot\|_{\square_\ell(\bbx_e)}$ is a norm on the vector subspace of $L_1(\bbx_e,\boldsymbol{\Sigma}_e,\bmu_e)$
consisting of all $f\in L_1(\bbx_e,\boldsymbol{\Sigma}_e,\bmu_e)$ with $\|f\|_{\square_{\ell}(\bbx_e)}<\infty$.
\item[(d)] Let $1<p\mik\infty$ and let $q$ denote the conjugate exponent of $p$. Assume that $\ell\meg q$ and that $e=\{i,j\}$ is a doubleton.
Then for every $f\in L_1(\bbx_e, \boldsymbol{\Sigma}_e,\bmu_e)$, every $u\in L_p(X_i,\Sigma_i,\mu_i)$ and every $v\in L_p(X_j,\Sigma_j, \mu_j)$ we have
\begin{equation} \label{e2.4}
|\ave[f(x_i,x_j) u(x_i) v(x_j)\, | \, x_i \in X_i, x_j \in X_j]| \mik \|f\|_{\square_{\ell}(\bbx_e)}\, \|u\|_{L_p} \|v\|_{L_p}.
\end{equation}
\end{enumerate}
\end{prop}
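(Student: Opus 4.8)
The plan is to prove parts (a)--(d) in order, using the recursive description \eqref{e2.1}--\eqref{e2.2} of the $\ell$-box norm and bootstrapping later parts from earlier ones. For part (a), I would argue by induction on $|e|$, exactly as in the classical Gowers--Cauchy--Schwarz argument but with the index set $\{0,\dots,\ell-1\}$ in place of $\{0,1\}$. The base case $|e|=1$ is a direct application of the (generalized) H\"older inequality: writing the left-hand side as $|\ave[\prod_{\omega=0}^{\ell-1} f_\omega(x_j^{(\omega)})]|=\prod_{\omega=0}^{\ell-1}|\ave[f_\omega]|=\prod_{\omega}\|f_\omega\|_{\square_\ell(\bbx_e)}$ by \eqref{e2.1}. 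For the inductive step, fix $j\in e$ and split each $\omega\in\{0,\dots,\ell-1\}^e$ according to its $j$-th coordinate $\omega_j=m$; integrating first over the coordinates other than $j$ and applying the inductive hypothesis on $e\setminus\{j\}$ to the functions $\prod_{m=0}^{\ell-1} f_{\omega',m}(\,\cdot\,,x_j^{(m)})$ (one factor for each $\omega'\in\{0,\dots,\ell-1\}^{e\setminus\{j\}}$), followed by H\"older's inequality in the $\ell$ variables $x_j^{(0)},\dots,x_j^{(\ell-1)}$, yields \eqref{e2.3}; a standard change-of-variables/telescoping bookkeeping shows the resulting product is $\prod_\omega\|f_\omega\|_{\square_\ell(\bbx_e)}$. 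The one point requiring care is that, since $\ell$ is even, all the $\ell$-th power integrals appearing are integrals of nonnegative quantities, so no absolute-value subtleties arise; this is precisely where evenness of $\ell$ is used.

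For part (b), the inequality $|\ave[f]|\le\|f\|_{\square_\ell(\bbx_e)}$ follows by specializing (a) to $f_\omega=f$ for all $\omega$. The monotonicity $\|f\|_{\square_{\ell_1}(\bbx_e)}\le\|f\|_{\square_{\ell_2}(\bbx_e)}$ for even $\ell_1\le\ell_2$ I would also obtain from (a): the $\ell_2$-box norm, raised to the power $\ell_1^{|e|}$, can be written as an average of products of $\ell_1^{|e|}$ translates of $f$ over a sub-family of points, to which (a) with parameter $\ell_1$ applies — alternatively, and perhaps more cleanly, one uses the recursion \eqref{e2.2} together with the fact that for a nonnegative random variable $g$ the map $\ell\mapsto\ave[g^\ell]^{1/\ell}$ is nondecreasing (an instance of Jensen/power-mean), inducting on $|e|$. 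For part (c), homogeneity and nonnegativity are clear; the triangle inequality follows from (a) by the usual expansion trick: expand $\|f+g\|_{\square_\ell}^{\ell^{|e|}}$ as a sum over the $2^{\ell^{|e|}}$ ways of choosing $f$ or $g$ in each factor, bound each term by (a) as a product of box norms, and recognize the total as $(\|f\|_{\square_\ell}+\|g\|_{\square_\ell})^{\ell^{|e|}}$; finally $\|f\|_{\square_\ell(\bbx_e)}=0$ forces $\ave[f\cdot(\text{test functions})]=0$ for enough test functions (using $|e|\ge 2$ and a Fubini/product-algebra argument) to conclude $f=0$ a.e.

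The substantive step is part (d), and I expect this to be the main obstacle. Here $e=\{i,j\}$ and we must control $\ave[f(x_i,x_j)u(x_i)v(x_j)]$ by $\|f\|_{\square_\ell(\bbx_e)}\|u\|_{L_p}\|v\|_{L_p}$ when $\ell\ge q$, the conjugate exponent of $p$. The natural route is to apply part (a) with the doubleton $e$ and a carefully chosen family $\{f_\omega:\omega\in\{0,\dots,\ell-1\}^{\{i,j\}}\}$: put $f_{(0,0)}=f\cdot(u\otimes v)$ (or $f$ times a suitable product of powers of $u$ and $v$) and let the other $f_\omega$ be either $f$ or products of coordinate functions built from $u,v$, arranged so that (i) when we form $\ave[\prod_\omega f_\omega(\bx_e^{(\omega)})]$ the extra factors telescope/cancel to leave exactly a positive power of $\ave[f(x_i,x_j)u(x_i)v(x_j)]$ on the left, and (ii) each $\|f_\omega\|_{\square_\ell(\bbx_e)}$ is either $\|f\|_{\square_\ell(\bbx_e)}$ or, for the factors involving only $u$ or only $v$, a one-dimensional quantity of the form $|\ave[u^{a}]|$ or $|\ave[v^{b}]|$ that is bounded via H\"older (using \eqref{e2.1}) by $\|u\|_{L_p}$ or $\|v\|_{L_p}$ — and this is exactly where the hypothesis $\ell\ge q$ is needed, to ensure the relevant powers of $u,v$ are integrable given only $u,v\in L_p$. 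Concretely I anticipate using that $\square_\ell(\bbx_{\{i\}})$-norm of a function depending only on $x_i$ raised to an appropriate power is an $L^{a}$-type norm with $a\le p$ precisely when $\ell\ge q$. Balancing the exponents so that both the left-hand cancellation and the right-hand H\"older bounds work simultaneously is the delicate combinatorial accounting; once the correct family $\{f_\omega\}$ is identified, \eqref{e2.4} falls out by taking an appropriate root of the inequality furnished by part (a).
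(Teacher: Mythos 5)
Your plans for parts (a) and (c) match the paper's proof: (a) by induction on $|e|$ via the same Gowers--Cauchy--Schwarz argument, and (c) via the expansion trick for the triangle inequality and testing against rectangle functions for definiteness.

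In part (b) there is a small but real slip: specializing (a) to $f_\omega=f$ \emph{for all} $\omega$ gives only the tautology $\|f\|_{\square_\ell}^{\ell^{|e|}}\mik\|f\|_{\square_\ell}^{\ell^{|e|}}$. To obtain $|\ave[f]|\mik\|f\|_{\square_\ell(\bbx_e)}$ you must take $f_{0^e}=f$ and $f_\omega=\mathbf{1}$ for all $\omega\neq 0^e$; similarly, for the monotonicity in $\ell$ one takes $f_\omega=f$ for $\omega\in\{0,\dots,\ell_1-1\}^e$ and $f_\omega=\mathbf 1$ otherwise, and applies (a) with parameter $\ell_2$. The ``recursion plus power-mean'' alternative you float is not as clean as you suggest, since increasing $\ell$ changes both the number of inner factors $\prod_{\omega=0}^{\ell-1}f(\cdot,x_j^{(\omega)})$ and the outer exponent simultaneously; you would still need the padding-with-$\mathbf 1$'s device.

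Part (d) is the genuine gap. You propose to deduce \eqref{e2.4} from (a) by a clever choice of $\{f_\omega\}$, but no such choice can make $\ave\big[\prod_{\omega\in\{0,\dots,\ell-1\}^{\{i,j\}}}f_\omega(\bx_e^{(\omega)})\big]$ collapse to a power of the trilinear form $\ave[f(x_i,x_j)u(x_i)v(x_j)]$. In the GCS expression the function at slot $\omega=(a,b)$ is evaluated at the off-diagonal corner $(x_i^{(a)},x_j^{(b)})$, so any slot in which you place $f$ contributes a factor $f(x_i^{(a)},x_j^{(b)})$; there is no telescoping that turns this lattice of corner evaluations into repeated copies of the single-point product $f(x_i,x_j)u(x_i)v(x_j)$. (If instead you put $u$ or $v$ alone in some slots, the resulting expectation factors into $\ave[f\cdots]\cdot\ave[u]^{\#}\cdot\ave[v]^{\#}$, which is again not a power of $\ave[fuv]$.) The paper proves (d) without ever invoking (a): it performs two successive applications of H\"older's inequality with the exponent pair $(\ell,\ell')$, where $\ell'$ is the conjugate of $\ell$. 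First, H\"older in $x_i$ gives $|I|\mik I_1^{1/\ell}\|u\|_{L_{\ell'}}$ with $I_1=\ave\big[\prod_{\omega=0}^{\ell-1}f(x_i,x_j^{(\omega)})v(x_j^{(\omega)})\big]$; then H\"older in $(x_j^{(0)},\dots,x_j^{(\ell-1)})$ gives $I_1\mik\|f\|_{\square_\ell(\bbx_e)}^\ell\|v\|_{L_{\ell'}}^\ell$, using the recursion \eqref{e2.2} to recognize the box norm. The hypothesis $\ell\meg q$ enters solely to guarantee $\ell'\mik p$, so that $\|u\|_{L_{\ell'}}\mik\|u\|_{L_p}$ and $\|v\|_{L_{\ell'}}\mik\|v\|_{L_p}$ on a probability space; it is not an integrability issue for powers of $u,v$ as you conjectured. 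In short, (d) is a direct iterated-H\"older argument tailored to the trilinear form, not a corollary of the Gowers--Cauchy--Schwarz inequality.
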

\begin{proof}[Proof of Proposition \emph{\ref{p2.1}}]
(a) We follow the proof from \cite[~Lemma B.2]{GT2} which proceeds by induction on the cardinality of $e$.
The case ``$|e|=1$" is straightforward, and so let $r\meg 2$ and assume that the result has been proved for every
$e'\subseteq [n]$ with $1\mik |e'|\mik r-1$. Let $e\subseteq [n]$ with $|e|=r$ be arbitrary. Fix $j\in e$, set $e'=e\setminus\{j\}$
and for every $\omega\in\{0,\dots,\ell-1\}^e$ let $f_\omega~\in~L_1(\bbx_e,\boldsymbol{\Sigma}_e,\bmu_e)$. Moreover, for every
$\omega_j\in \{0,\dots,\ell-1\}$ define $G_{\omega_j}\colon\bbx_{e'}^\ell\to \rr$ by
\begin{equation} \label{e2.5}
G_{\omega_j}(\bx_{e'}^{(0)},\dots,\bx_{e'}^{(\ell-1)})=
\ave\Big[\prod_{\omega_{e'}\in\{0,\dots,\ell-1\}^{e'}}\!\!\! f_{(\omega_{e'}, \omega_j)}(\bx^{(\omega_{e'})}_{e'}, x_j)\,\Big|\, x_j\in X_j\Big]
\end{equation}
where $(\omega_{e'}, \omega_j)$ is the unique element $\omega$ of $\{0,\dots,\ell-1\}^e$ such that $\omega(j)=\omega_j$ and
$\omega(i)=\omega_{e'}(i)$ for every $i\in e'$. Observe that
\[ \Big| \ave\Big[\prod_{\omega\in\{0,\dots,\ell-1\}^e} \!\!\! f_\omega(\bx_e^{(\omega)})\,\Big|\, \bx_e^{(0)},\dots,\bx_e^{(\ell-1)}\in\bbx_e\Big]\Big|
= \Big|\ave\Big[ \prod_{\omega_j=0}^{\ell-1} G_{\omega_j}\Big]\Big| \]
and, by H\"{o}lder's inequality\footnote{Here, and in the rest of the proof, we use the following form of H\"{o}lder's inequality:
if $(X,\Sigma,\mu)$ is a probability space, then for every integer $k\meg 2$, every $p_1,\dots,p_k>1$ with $\sum_{i=1}^k 1/p_i=1$, and every
$f_1,\dots,f_k\colon X\to \rr$ with $f_i\in L_{p_i}(X,\Sigma,\mu)$ for all $i\in [k]$, we have
\[ \big| \ave \big[ \prod_{i=1}^{k} f_i \big] \big| \mik \prod_{i=1}^{k} \|f_i\|_{L_{p_i}}.\] },
$|\ave\big[\prod_{\omega_j=0}^{\ell-1}G_{\omega_j}\big]|\mik\prod_{\omega_j=0}^{\ell-1}\ave[G_{\omega_j}^\ell]^{1/\ell}$.
Therefore, it is enough to show that for every $\omega_j\in\{0,\dots,\ell-1\}$ we have
\begin{equation} \label{e2.6}
\ave[G_{\omega_j}^\ell]\mik \prod_{\omega_{e'}\in\{0,\dots,\ell-1\}^{e'}}\|f_{(\omega_{e'},\omega_j)}\|_{\square_{\ell}(\bbx_e)}^\ell.
\end{equation}
Indeed, fix $\omega_j\in \{0,\dots,\ell-1\}$ and notice that, by \eqref{e2.5},
\begin{equation} \label{e2.7}
G_{\omega_j}^\ell\!(\bx_{e'}^{(0)},\dots,\bx_{e'}^{(\ell-1)})= \ave\Big[\prod_{\omega_{e'}\in\{0,\dots,\ell-1\}^{e'}}\prod_{\omega=0}^{\ell-1}
f_{(\omega_{e'}, \omega_j)}(\bx^{(\omega_{e'})}_{e'}, x^{(\omega)}_j)\Big]
\end{equation}
where the expectation is over all $x^{(0)}_j,\dots,x^{(\ell-1)}_j\in X_j$. By \eqref{e2.7} and Fubini's theorem, we see that
\[ \ave[G_{\omega_j}^\ell]=\ave\Big[\ave\big[\prod_{\omega_{e'}\in\{0,\dots,\ell-1\}^{e'}}\prod_{\omega=0}^{\ell-1}
f_{(\omega_{e'}, \omega_j)}(\bx_{e'}^{(\omega_{e'})}, x^{(\omega)}_j)\,\big|\, \bx_{e'}^{(0)},\dots,\bx_{e'}^{(\ell-1)}\in\bbx_{e'}\big]\Big] \]
where the outer expectation is over all $x^{(0)}_j,\dots,x^{(\ell-1)}_j\in X_j$. Thus, applying the induction hypothesis and H\"{o}lder's
inequality, we obtain that
\begin{eqnarray} \label{e2.8}
\ave[G_{\omega_j}^\ell] & \mik &
\ave\Big[\prod_{\omega_{e'}\in\{0,\dots,\ell-1\}^{e'}}\big\|\prod_{\omega=0}^{\ell-1}
f_{(\omega_{e'}, \omega_j)}(\,\cdot\,, x^{(\omega)}_j)\big\|_{\square_\ell(\bbx_{e'})}\Big] \\
&\mik & \prod_{\omega_{e'}\in\{0,\dots,\ell-1\}^{e'}} \ave\Big[\big\|\prod_{\omega=0}^{\ell-1}
f_{(\omega_{e'}, \omega_j)}(\,\cdot\,, x^{(\omega)}_j)\big\|^{\ell^{|e'|}}_{\square_\ell(\bbx_{e'})}\Big]^{1/\ell^{|e'|}}. \nonumber
\end{eqnarray}
By \eqref{e2.2} and \eqref{e2.8}, we conclude that \eqref{e2.6} is satisfied.
\medskip

\noindent (b) It is a consequence of the Gowers--Cauchy--Schwarz inequality. Specifically, for every $\omega\in\{0,\dots,\ell-1\}^e$
let $f_\omega=f$ if $\omega=0^e$ and $f_\omega=1$ otherwise. By \eqref{e2.3}, we see that $|\ave[f]|\mik\|f\|_{\square_\ell(\bbx_e)}$.
Next, let $\ell_1\mik\ell_2$ be even positive integers. As before, for every $\omega\in\{0,\dots,\ell_2-1\}^e$ let $f_\omega=f$ if
$\omega\in\{0,\dots,\ell_1-1\}^e$; otherwise, let $f_{\omega}=1$. Then we have
\begin{eqnarray*}
\|f\|^{\ell_1^{|e|}}_{\square_{\ell_1}(\bbx_e)}\!\!\!\! & = &
\ave\Big[\prod_{\omega\in\{0,\dots,\ell_1-1\}^e} \!\!\!f(\bx_e^{(\omega)})\,\Big|\, \bx_e^{(0)},\dots,\bx_e^{(\ell_1-1)}\in\bbx_e\Big] \\
& = & \ave\Big[\prod_{\omega\in\{0,\dots,\ell_2-1\}^e} \!\!\! f_\omega(\bx_e^{(\omega)})\,\Big|\, \bx_e^{(0)},\dots,\bx_e^{(\ell_2-1)}\in\bbx_e\Big]
\stackrel{\eqref{e2.3}}{\mik} \|f\|^{\ell_1^{|e|}}_{\square_{ \ell_2}(\bbx_e)}
\end{eqnarray*}
which implies that $\|f\|_{\square_{\ell_1}(\bbx_e)}\mik \|f\|_{\square_{\ell_2}(\bbx_e)}$.
\medskip

\noindent (c) Absolute homogeneity is straightforward. The triangle inequality
\[ \|f+g\|_{\square_\ell(\bbx_e)}\mik \|f\|_{\square_\ell(\bbx_e)}+\|g\|_{\square_\ell(\bbx_e)} \]
follows by raising both sides to the power $\ell^{|e|}$ and then applying \eqref{e2.3}. Finally, let $f\in L_1(\bbx_e,\bbs_e,\bmu_e)$
with $\|f\|_{\square_\ell(\bbx_e)}=0$ and observe that it suffices to show that $f=0$ $\bmu_e$-almost everywhere. First we note that using \eqref{e2.3}
and arguing precisely as in \cite[~Corollary B.3]{GT2} we have that $\ave[f\cdot \mathbf{1}_R]=0$ for every measurable rectangle $R$ of $\bbx_e$
(that is, every set $R$ of the form $\prod_{i\in e} A_i$ where $A_i\in \Sigma_i$ for every $i\in e$). We claim that this implies that
$\ave[f\cdot \mathbf{1}_A]=0$ for every $A\in\bbs_e$; this is enough to complete the proof. Indeed, fix $A\in\bbs_e$ and let $\ee>0$ be arbitrary.
Since $f$ is integrable, there exists $\delta>0$ such that $\ave[\,|f|\cdot \boldsymbol{1}_C]<\ee$ for every $C\in\bbs_e$ with
$\bmu_e(C)<\delta$. Moreover, by Caratheodory's extension theorem, there exists a finite family $R_1,\dots, R_m$ of pairwise disjoint measurable
rectangles of $\bbx_e$ such that, setting $B=\bigcup_{k=1}^m R_k$, we have $\bmu_e(A\,\triangle\, B)<\delta$ (see, e.g., \cite[Theorem 11.4]{Bi}).
Hence, $\ave[f\cdot\mathbf{1}_B]=0$ and so
\[ |\ave[f\cdot\mathbf{1}_A]|=|\ave[f\cdot\mathbf{1}_A]-\ave[f\cdot\mathbf{1}_B]|\mik \ave[\,|f|\cdot\mathbf{1}_{A\triangle B}]<\ee. \]
Since $\ee$ was arbitrary, we conclude that $\ave[f\cdot\mathbf{1}_A]=0$.
\medskip

\noindent (d) Set $I=\ave[f(x_i,x_j) u(x_i) v(x_j)\,|\,x_i \in X_i, x_j \in X_j]$ and let $\ell'$ denote the conjugate exponent of $\ell$.
Notice that $1<\ell'\mik p$. By H\"{o}lder's inequality, we have
\begin{eqnarray} \label{e2.9}
\ \ \ \ |I|\!\!\!\! & = & \big|\ave\big[ \ave[f(x_i,x_j)\, v(x_j)\,|\, x_j \in X_j]\, u(x_i) \,\big|\,x_i \in X_i\big]\big| \\
& \mik & \ave\big[ \ave[f(x_i,x_j)\, v(x_j)\,|\, x_j \in X_j]^{\ell}\,\big|\,x_i\in X_i\big]^{1/\ell}\cdot \|u\|_{L_{\ell'}}
\mik I_1^{1/\ell}\cdot \|u\|_{L_p} \nonumber
\end{eqnarray}
where $I_1=\ave\big[\prod_{\omega=0}^{\ell-1} f(x_i,x_j^{(\omega)})\, v(x_j^{(\omega)})\,\big| \, x_i \in X_i,x_j^{(0)},\dots,x_j^{(\ell-1)} \in X_j\big]$.
Moreover,
\begin{eqnarray} \label{e2.10}
\ \ \ \ \ \ \ I_1\!\!\!\!\!\!\! & = & \ave\Big[ \ave\big[\prod_{\omega=0}^{\ell-1}f(x_i,x_j^{(\omega)})\,\big|\, x_i \in X_i\big] \cdot
\prod_{\omega=0}^{\ell-1}v(x_j^{(\omega)}) \,\Big|\, x_j^{(0)},\dots,x_j^{(\ell-1)} \in X_j\Big] \\
& \mik & \ave\Big[ \ave\big[\prod_{\omega=0}^{\ell-1} f(x_i,x_j^{(\omega)})\,\big|\,x_i \in X_i\big]^{\ell}
\,\Big|\, x_j^{(0)},\dots,x_j^{(\ell-1)} \in X_j\Big]^{1/\ell}\cdot \|v\|_{L_{\ell'}}^{\ell} \nonumber \\
& \stackrel{\eqref{e2.2}}{=} & \|f\|_{\square_\ell(\bbx_e)}^\ell\cdot \|v\|_{L_{\ell'}}^{\ell}\mik
\|f\|_{\square_\ell(\bbx_e)}^\ell\cdot \|v\|_{L_p}^{\ell}. \nonumber
\end{eqnarray}
By \eqref{e2.9} and \eqref{e2.10}, the result follows.
\end{proof}

\subsection{The $(\ell,p)$-box norms}

We will need the following $L_p$ versions of the $\ell\text{-box}$ norms. We remark that closely related norms
appear\footnote{Precisely, in \cite{C}, for every finite abelian group $Z$, every integer $s\geqslant 2$
and every $f\colon Z\to \mathbb{R}$ the quantity $\big\|\, |f|^2 \big\|^{1/2}_{U^s(Z)}$ was considered. (Here,
$\|\cdot\|_{U^s(Z)}$ stands for the $s$-th Gowers uniformity norm for the group $Z$.) It is noted in \cite{C} that this quantity
is indeed a norm. The $(\ell,p)$-box norms defined above are the analogues, in the hypergraph setting, of these
norms.} in \cite{C}. Recall that by $\mathscr{H}~=~(n,\langle(X_i,\Sigma_i,\mu_i):i\in [n]\rangle,\calh)$
we denote a hypergraph system.
\begin{defn} \label{d2.2}
Let $e\subseteq [n]$ be nonempty and let $\ell\meg 2$ be an even integer. Also let $1\mik p<\infty$ and $f\in L_p(\bbx_e,\bbs_e,\bmu_e)$.
The \emph{$(\ell,p)$-box norm} of $f$ is defined by
\begin{equation} \label{e2.11}
\|f\|_{\square_{\ell,p}(\bbx_e)}\coloneqq \big\| |f|^p \big\|^{1/p}_{\square_{\ell}(\bbx_e)}.
\end{equation}
Moreover, for every $f\in L_\infty(\bbx_e,\bbs_e,\bmu_e)$ we set
\begin{equation} \label{e2.12}
\|f\|_{\square_{\ell,\infty}(\bbx_e)} \coloneqq \|f\|_{L_\infty}.
\end{equation}
\end{defn}
We have the following analogue of Proposition \ref{p2.1}.
\begin{prop} \label{p2.3}
Let $e\subseteq [n]$ be nonempty and let $\ell\meg 2$ be an even integer.
\begin{enumerate}
\item[(a)] Let  $1\mik p<\infty$. If $f_\omega\in L_p(\bbx_e,\bbs_e,\bmu_e)$ for every $\omega\in\{0,\dots,\ell-1\}^e$, then
\begin{equation} \label{e2.13}
\ave\Big[ \prod_{\omega\in\{0,\dots,\ell-1\}^e}\!\!\!\! |f_\omega|^p(\bx_e^{(\omega)})\,\Big|\, \bx_e^{(0)},\dots,\bx_e^{(\ell-1)}\in\bbx_e\Big]
\mik\!\!\! \prod_{\omega\in \{0,\dots,\ell-1\}^e}\!\!\!\!\!\! \|f_\omega\|^p_{\square_{\ell,p}(\bbx_e)}.
\end{equation}
\item[(b)] Let $1< p,q < \infty$ be conjugate exponents, that is, $1/p+1/q=1$. Then for every $f\in L_p(\bbx_e,\bbs_e,\bmu_e)$ and every
$g\in L_q(\bbx_e,\bbs_e,\bmu_e)$ we have
\begin{equation} \label{e2.14}
\|fg\|_{\square_{\ell}(\bbx_e)} \mik \|f\|_{\square_{\ell, p}(\bbx_e)}\!\cdot\, \|g\|_{\square_{\ell,q}(\bbx_e)}.
\end{equation}
\item[(c)] Assume that $|e|\meg 2$ and let $1\mik p<\infty$. Then $\|\cdot \|_{\square_{\ell, p}(\bbx_e)}$ is a norm on the vector subspace of\,
$L_p(\bbx_e,\bbs_e,\bmu_e)$ consisting of all $f\in L_p(\bbx_e,\bbs_e,\bmu_e)$ with $\|f\|_{\square_{\ell,p}(\bbx_e)}<\infty$.
Moreover, the following hold.
\begin{enumerate}
\item[(i)] For every $f\in L_p(\bbx_e,\bbs_e,\bmu_e)$ we have $\|f\|_{L_p}\mik \|f\|_{\square_{\ell,p}(\bbx_e)}$.
\item[(ii)] For every $1\mik p_1\mik p_2<\infty$ and every $f\in L_{p_2}(\bbx_e,\bbs_e,\bmu_e)$ we have $\|f\|_{\square_{\ell, p_1}(\bbx_e)}
\mik \|f\|_{\square_{\ell,p_2}(\bbx_e)}$.
\item[(iii)] For every $f\in L_\infty(\bbx_e,\bbs_e,\bmu_e)$ we have $\lim_{p\to\infty}\|f\|_{\square_{\ell, p}(\bbx_e)}=\|f\|_{L_\infty}$.
\end{enumerate}
\end{enumerate}
\end{prop}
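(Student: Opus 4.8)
The plan is to deduce everything from Proposition~\ref{p2.1}, from H\"{o}lder's and Jensen's inequalities on the $\ell$-fold product probability space $\bbx_e^\ell$ over which the box form lives, and from the elementary observation that $\|\cdot\|_{\square_\ell(\bbx_e)}$ is \emph{monotone} on nonnegative functions: if $0\mik g_1\mik g_2$ in $L_1(\bbx_e,\bbs_e,\bmu_e)$, then $\prod_{\omega\in\{0,\dots,\ell-1\}^e}g_1(\bx_e^{(\omega)})\mik\prod_{\omega\in\{0,\dots,\ell-1\}^e}g_2(\bx_e^{(\omega)})$ pointwise on $\bbx_e^\ell$, hence $\|g_1\|_{\square_\ell(\bbx_e)}\mik\|g_2\|_{\square_\ell(\bbx_e)}$; the same computation shows $\|cg\|_{\square_\ell(\bbx_e)}=c\,\|g\|_{\square_\ell(\bbx_e)}$ for every $c\meg 0$ and that $\|\mathbf 1\|_{\square_\ell(\bbx_e)}=1$. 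Granting this, part~(a) is immediate by applying the Gowers--Cauchy--Schwarz inequality \eqref{e2.3} to the functions $|f_\omega|^p\in L_1(\bbx_e,\bbs_e,\bmu_e)$: the integrand on the left of \eqref{e2.3} is then nonnegative, so the absolute value is superfluous, and $\||f_\omega|^p\|_{\square_\ell(\bbx_e)}=\|f_\omega\|^p_{\square_{\ell,p}(\bbx_e)}$ by \eqref{e2.11}. For part~(b), note that $\prod_\omega (fg)(\bx_e^{(\omega)})=\big(\prod_\omega f(\bx_e^{(\omega)})\big)\big(\prod_\omega g(\bx_e^{(\omega)})\big)$, so H\"{o}lder's inequality on $\bbx_e^\ell$ with exponents $p,q$ gives
\[ \|fg\|^{\ell^{|e|}}_{\square_\ell(\bbx_e)}\mik\ave\Big[\prod_\omega |fg|(\bx_e^{(\omega)})\Big]\mik\ave\Big[\prod_\omega |f|^p(\bx_e^{(\omega)})\Big]^{1/p}\ave\Big[\prod_\omega |g|^q(\bx_e^{(\omega)})\Big]^{1/q}=\|f\|^{\ell^{|e|}}_{\square_{\ell,p}(\bbx_e)}\,\|g\|^{\ell^{|e|}}_{\square_{\ell,q}(\bbx_e)}, \]
and taking $\ell^{|e|}$-th roots finishes the proof.

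I now turn to part~(c), whose substance is the triangle inequality; this is the step I expect to be the main obstacle. Absolute homogeneity follows from the scalar identity above, and positive definiteness from Proposition~\ref{p2.1}(c) applied to $|f|^p$ (this is where the hypothesis $|e|\meg 2$ enters). For the triangle inequality, the case $p=1$ is direct: $\|h\|_{\square_{\ell,1}(\bbx_e)}=\||h|\|_{\square_\ell(\bbx_e)}$, and $\||f+g|\|_{\square_\ell(\bbx_e)}\mik\||f|+|g|\|_{\square_\ell(\bbx_e)}\mik\||f|\|_{\square_\ell(\bbx_e)}+\||g|\|_{\square_\ell(\bbx_e)}$ by monotonicity and Proposition~\ref{p2.1}(c). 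For $1<p<\infty$ I would first reduce, using $|f+g|^p\mik(|f|+|g|)^p$ and monotonicity, to the case $f,g\meg 0$; and then, truncating $f$ and $g$ at height $K$ and invoking the monotone convergence theorem --- note that $(f\wedge K)+(g\wedge K)$ increases to $f+g$, and that $(f\wedge K)^p\mik f^p$ takes care of the right-hand side --- reduce further to \emph{bounded} nonnegative $f,g$.

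In that case, set $q=p/(p-1)$ and $u=(f+g)^{p-1}$, which is bounded; then $(f+g)^p=uf+ug$, so by the triangle inequality for $\|\cdot\|_{\square_\ell(\bbx_e)}$ (Proposition~\ref{p2.1}(c)) and by \eqref{e2.14},
\[ \|(f+g)^p\|_{\square_\ell(\bbx_e)}\mik\|uf\|_{\square_\ell(\bbx_e)}+\|ug\|_{\square_\ell(\bbx_e)}\mik\|u\|_{\square_{\ell,q}(\bbx_e)}\big(\|f\|_{\square_{\ell,p}(\bbx_e)}+\|g\|_{\square_{\ell,p}(\bbx_e)}\big). \]
Since $(p-1)q=p$ we have $\|u\|^q_{\square_{\ell,q}(\bbx_e)}=\||u|^q\|_{\square_\ell(\bbx_e)}=\|(f+g)^p\|_{\square_\ell(\bbx_e)}$, and this is finite because $f,g$ are bounded; if it vanishes the desired inequality is trivial, and otherwise we divide by $\|(f+g)^p\|^{1/q}_{\square_\ell(\bbx_e)}$ and use $1-1/q=1/p$ to obtain $\|f+g\|_{\square_{\ell,p}(\bbx_e)}=\|(f+g)^p\|^{1/p}_{\square_\ell(\bbx_e)}\mik\|f\|_{\square_{\ell,p}(\bbx_e)}+\|g\|_{\square_{\ell,p}(\bbx_e)}$. (The same inequality shows that the set of $f$ with $\|f\|_{\square_{\ell,p}(\bbx_e)}<\infty$ is closed under addition, hence a subspace.)

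Finally, (i) follows from Proposition~\ref{p2.1}(b) applied to $|f|^p$, since $\|f\|^p_{L_p}=|\ave[|f|^p]|\mik\||f|^p\|_{\square_\ell(\bbx_e)}=\|f\|^p_{\square_{\ell,p}(\bbx_e)}$. For (ii), with $1\mik p_1\mik p_2<\infty$, put $g=|f|^{p_1}\meg 0$ and apply Jensen's inequality on the probability space $\bbx_e^\ell$, to the convex function $t\mapsto t^{p_2/p_1}$ and the random variable $Y=\prod_\omega g(\bx_e^{(\omega)})\meg 0$: this yields $\ave[Y]^{p_2/p_1}\mik\ave[Y^{p_2/p_1}]$, which after taking $\ell^{|e|}$-th roots reads $\|g\|^{p_2/p_1}_{\square_\ell(\bbx_e)}\mik\|g^{p_2/p_1}\|_{\square_\ell(\bbx_e)}=\||f|^{p_2}\|_{\square_\ell(\bbx_e)}$; raising both sides to the power $1/p_2$ gives $\|f\|_{\square_{\ell,p_1}(\bbx_e)}\mik\|f\|_{\square_{\ell,p_2}(\bbx_e)}$. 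For (iii), monotonicity together with $\|\mathbf 1\|_{\square_\ell(\bbx_e)}=1$ gives $\||f|^p\|_{\square_\ell(\bbx_e)}\mik\|f\|^p_{L_\infty}$, i.e.\ $\|f\|_{\square_{\ell,p}(\bbx_e)}\mik\|f\|_{L_\infty}$ for every $p$, while (i) gives $\|f\|_{L_p}\mik\|f\|_{\square_{\ell,p}(\bbx_e)}$; since $\|f\|_{L_p}\to\|f\|_{L_\infty}$ as $p\to\infty$, the claim follows.
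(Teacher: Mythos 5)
Your proof is correct and follows essentially the same route as the paper's: parts (a), (b), and (c.i) coincide with the paper's reasoning, and for the triangle inequality in (c) you carry out in full the Minkowski-type argument, with $u=(f+g)^{p-1}$ and \eqref{e2.14}, that the paper merely alludes to (your truncation step is a valid safety net, although the cruder bound $(f+g)^p\mik 2^{p-1}(f^p+g^p)$ plus the triangle inequality for $\|\cdot\|_{\square_\ell(\bbx_e)}$ would already give the finiteness needed to divide). For (c.ii) you apply Jensen's inequality with the convex map $t\mapsto t^{p_2/p_1}$ on $\bbx_e^\ell$, whereas the paper derives the same power-mean fact by invoking \eqref{e2.14} with $|f|^{p_1}$ against the constant function $\mathbf{1}$; the two are equivalent.
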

\begin{proof}
Part (a) follows immediately by \eqref{e2.3}. For part (b) fix a pair $1<p,q<\infty$ of conjugate exponents, and let $f\in L_p(\bbx_e,\bbs_e,\bmu_e)$
and $g\in L_q(\bbx_e,\bbs_e,\bmu_e)$ be arbitrary. We define $F,G\colon\bbx_e^{\ell}\to \rr$ by
$F(\bx_e^{(0)},\dots,\bx_e^{(\ell-1)})=\prod_{\omega\in\{0,\dots,\ell-1\}^e}f(\bx_e^{(\omega)})$ and
$G(\bx_e^{(0)},\dots,\bx_e^{(\ell-1)})=\prod_{\omega\in\{0,\dots,\ell-1\}^e}g(\bx_e^{(\omega)})$. By H\"{o}lder's inequality, we have
\[ \|fg\|^{\ell^{|e|}}_{\square_{\ell}(\bbx_e)}\mik\ave[\,|F\cdot G|\,] \mik \ave[\,|F|^p]^{1/p} \cdot \ave[\,|G|^q]^{1/q}. \]
Noticing that $\ave[\,|F|^p]^{1/p}=\|f\|^{\ell^{|e|}}_{\square_{\ell,p}(\bbx_e)}$ and $\ave[\,|G|^q]^{1/q}=\|g\|^{\ell^{|e|}}_{\square_{\ell, q}(\bbx_e)}$
we conclude that \eqref{e2.14} is satisfied.

We proceed to show part (c). Arguing as in the proof of the classical Minkowski inequality we see that the $(\ell,p)$-box norm satisfies the
triangle inequality. Absolute homogeneity is clear and so, by Proposition \ref{p2.1}, we conclude that $\|\cdot\|_{\square_{\ell,p}(\bbx_e)}$
is indeed a norm. Next, observe that part (c.i) follows by \eqref{e2.13} applied for $f_\omega=f$ if $\omega=\{0\}^e$ and $f_\omega=1$ otherwise.
For part (c.ii) set $p=p_2/p_1$ and notice that
\[ \|f\|_{{\square}_{\ell,p_1}(\bbx_e)}^{p_1} =\big\| |f|^{p_1}\big\|_{\square_\ell(\bbx_e)}
\stackrel{\eqref{e2.14}}{\mik} \big\| |f|^{p_1}\big\|_{\square_{\ell,p}(\bbx_e)} =\|f\|^{p_1}_{\square_{\ell,p_2}(\bbx_e)}. \]
Finally, let $f\in L_\infty(\bbx_e,\bbs_e,\bmu_e)$. By part (c.i), we have
\[ \|f\|_{L_p}\mik \|f\|_{\square_{\ell,p}(\bbx_e)}\mik \|f\|_{L_\infty}.  \]
Since $\lim_{p\to\infty}\|f\|_{L_p}=\|f\|_{L_\infty}$, we obtain that
$\lim_{p\to\infty} \|f\|_{\square_{\ell, p}(\bbx_e)}=\|f\|_{L_\infty}$ and the proof is completed.
\end{proof}

%----A generalized von Neumann theorem for $L_p$ graphons-----%

\section{A generalized von Neumann theorem for $L_p$ graphons}

\numberwithin{equation}{section}

Let $(X,\Sigma,\mu)$ be a probability space and recall that a \emph{graphon}\footnote{We remark that in several
places in the literature, graphons are required to be $[0,1]$-valued, and the term \textit{kernel} is used for (not necessarily bounded)
integrable, symmetric random variables.} is an integrable random variable $W\colon X\times X\to\rr$ which is symmetric, that is, $W(x,y)=W(y,x)$
for every $x,y\in X$. If, in addition, $W$ belongs to $L_p$ for some $p>1$, then $W$ is said to be an \textit{$L_p$ graphon} (see \cite{BCCZ}).

Now let $n$ be a positive integer and let $\calg$ be a nonempty graph on $[n]$. Recall that the \emph{maximum degree} of $\calg$ is
the number $\Delta(\calg)\coloneqq \max\big\{ |\{e\in\calg:i\in e\}|:i\in [n]\big\}$. Given two $L_p$ graphons $W$ and $U$, a natural problem
(which is of importance in the context of graph limits---see, e.g., \cite{L}) is to estimate the quantity
\[ \Big|\ave\Big[ \prod_{\{i,j\}\in \calg} W(x_i,x_j)\, \Big| \, x_1,\dots,x_n\in X\Big] -
\ave\Big[ \prod_{\{i,j\}\in \calg} U(x_i,x_j)\, \Big| \, x_1,\dots,x_n\in X\Big] \Big|. \]
Note that this problem essentially boils down to that of analysing the boundedness of the multilinear operator
\[ \Lambda_{\calg}\big( (f_e)_{e\in\calg}\big) \coloneqq \ave\Big[ \prod_{e=\{i,j\}\in\calg} f_e(x_i,x_j)\, \Big| \, x_1,\dots,x_n\in X\Big] \]
where the functions $(f_e)_{e\in\calg}$ belong to $L_p$. Not surprisingly, the behavior of this operator depends heavily on the range
of $p$ one is working with. Undoubtedly, the simplest case is when $p=\infty$; indeed, using Fubini's theorem, it is not hard to see
that for bounded functions the operator $\Lambda_{\calg}$ is controlled by the cut norm\footnote{We recall the definition of the cut
norm in \eqref{e4.2}. We note, however, that we will not use the cut norm in this section.}. The next critical range for the behavior
of $\Lambda_{\calg}$ is when $\Delta(\calg)\mik p<\infty$. In this case, H\"{o}lder's inequality yields that $\Lambda_{\calg}$ is
bounded in $L_p$. This information was used in \cite[~Theorem 2.20]{BCCZ} to show that $\Lambda_{\calg}$ is also controlled by the
cut norm when~$p>\Delta(\calg)$.

Unfortunately, in the regime $1<p<\Delta(\calg)$ the operator $\Lambda_{\calg}$ is not bounded but merely densely defined in $L_p$.
Nevertheless, experience from arithmetic combinatorics (see, e.g., \cite{Go1,TV}) and harmonic analysis (see, e.g., \cite{K})
indicates that one can still obtain nontrivial information provided that one replaces the $L_p$-norm with a suitable box norm.
It turns out that this intuition is correct as is shown in the following theorem.
\begin{thm}[Generalized von Neumann theorem for $L_p$ graphons] \label{t3.1}
Let $\Delta$ be a positive integer, $C\meg 1$ and $1<p\mik\infty$. If\, $p=\infty$ or $\Delta=1$, then we set $\ell=2$; otherwise, let
\begin{equation} \label{e3.1}
\ell=\min\big\{2n:n\in\nn \text{ and }  2n \meg p^{(\Delta-1)^{-1}}(p^{(\Delta-1)^{-1}}-1)^{-1}\big\}.
\end{equation}
Also let $\mathscr{G}=(n,\langle (X_i,\Sigma_i,\mu_i):i\in [n]\rangle,\calg)$ be a $2$-uniform hypergraph system with $\Delta(\calg)=\Delta$.
For every $e \in \calg$ let $f_e\in L_p(\bbx,\calb_e,\bmu)$ such that
\begin{equation} \label{e3.2}
\|\mathbf{f}_e\|_{\square_{\ell, p}(\bbx_e)}\mik 1
\end{equation} where $\mathbf{f}_e$
is as in \eqref{e1.5}. Assume that for every $($possibly empty$)$ $\calg'\subseteq \calg$ we have
\begin{equation}\label{e3.3}
\big\| \prod_{e \in \calg'} f_e \big\|_{L_p} \mik C.
\end{equation}
$($Here, we follow the convention that the product of an empty family of functions is equal to the constant function 1.$)$ Then we have
\begin{equation}\label{e3.4}
\Big|\ave\Big[\prod_{e \in \calg}f_e\Big] \Big|\mik C\cdot \min_{e\in \calg} \|\mathbf{f}_e\|_{\square_\ell(\bbx_e)}.
\end{equation}
\end{thm}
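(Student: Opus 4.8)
The plan is to follow the classical Cauchy--Schwarz strategy for generalized von Neumann theorems, as in \cite{GT1,GT2,Tao1}, but carefully tracking the integrability exponents so that H\"older's inequality can always be applied. Fix $e_0\in\calg$ achieving the minimum in \eqref{e3.4}; we will show $|\ave[\prod_{e\in\calg}f_e]|\mik C\cdot\|\mathbf{f}_{e_0}\|_{\square_\ell(\bbx_{e_0})}$. Write $e_0=\{i_0,j_0\}$. The idea is to successively ``duplicate'' the coordinates of $\bbx$ indexed by $i_0$ and then $j_0$, at each step applying a H\"older/Cauchy--Schwarz step to eliminate the dependence of the other factors $f_e$ on the duplicated coordinate, until only copies of $f_{e_0}$ remain and one recognizes a box-norm-type average.

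First I would set up the notation for the duplication. For $e\in\calg$ with $e=\{i,j\}$ and $i=i_0$ (resp.\ $j=j_0$), think of $f_e$ as a function of the $i_0$-coordinate (resp.\ $j_0$-coordinate) with the other coordinate as a parameter. In the first stage, condition on all coordinates except the $i_0$-coordinate, introduce $\ell$ independent copies $x_{i_0}^{(0)},\dots,x_{i_0}^{(\ell-1)}$ of the $i_0$-coordinate, and expand
\[
\Big|\ave\Big[\prod_{e\in\calg}f_e\Big]\Big|^{\ell}
= \Big|\ave\Big[\,\ave\big[\textstyle\prod_{e\ni i_0}f_e \,\big|\, x_{i_0}\big]\cdot \prod_{e\not\ni i_0}f_e\Big]\Big|^{\ell},
\]
and apply H\"older's inequality in the $x_{i_0}$-average: the inner factor $\ave[\prod_{e\ni i_0}f_e\mid x_{i_0}]$ is raised to the power $\ell$, while the remaining product $\prod_{e\not\ni i_0}f_e$ is raised to the power $\ell' = \ell/(\ell-1)$. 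The crucial point is the choice of $\ell$ in \eqref{e3.1}: there are at most $\Delta-1$ factors among $\{f_e : e\ni i_0,\ e\neq e_0\}$ besides $f_{e_0}$, and similarly at most $\Delta-1$ at the $j_0$ stage; the exponent arithmetic (two nested H\"older steps, so $f_e$ with $e$ disjoint from $e_0$ ends up raised to a power comparable to $\ell'^{\,2}$, and $f_e$ sharing one vertex to a power comparable to $\ell'$) is arranged so that after both duplications every surviving $f_e$ with $e\neq e_0$ lives in $L_{\ell'^2}$ or better, hence $\ell'^2\mik p$ guarantees all H\"older applications are legitimate; this is precisely where the definition $2n\meg p^{(\Delta-1)^{-1}}\big(p^{(\Delta-1)^{-1}}-1\big)^{-1}$ comes from (it forces $\ell'\mik p^{1/(\Delta-1)}$, i.e.\ $\ell'^{\,\Delta-1}\mik p$, with room to spare for the bookkeeping). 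The hypothesis \eqref{e3.3} for arbitrary $\calg'\subseteq\calg$ is exactly what is needed to bound the resulting product norms $\|\prod_{e\in\calg'}f_e\|_{L_p}$ by $C$, and \eqref{e3.2} controls the box norms of the duplicated $f_{e_0}$-copies that accumulate.

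After the first duplication one is left, inside an average over $x_{i_0}^{(0)},\dots,x_{i_0}^{(\ell-1)}$, with a product over $\omega_{i_0}\in\{0,\dots,\ell-1\}$ of expressions in which $f_{e_0}$ appears evaluated at $x_{i_0}^{(\omega_{i_0})}$; I then repeat the same maneuver in the $j_0$-coordinate, conditioning on everything except the $j_0$-coordinate and introducing $\ell$ copies $x_{j_0}^{(0)},\dots,x_{j_0}^{(\ell-1)}$. Collecting terms, the copies of $f_{e_0}$ assemble into exactly the average
\[
\ave\Big[\prod_{\omega\in\{0,\dots,\ell-1\}^{e_0}} f_{e_0}\big(x_{i_0}^{(\omega_{i_0})},x_{j_0}^{(\omega_{j_0})}\big)\,\Big|\, \text{all copies}\Big]
= \|\mathbf{f}_{e_0}\|_{\square_\ell(\bbx_{e_0})}^{\ell^2},
\]
by \eqref{e1.2}, while all other factors contribute, after the H\"older steps and \eqref{e3.3}, a bounded quantity with the right power of $C$; taking the appropriate root yields \eqref{e3.4}. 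The degenerate cases are easy: if $\Delta=1$ then $\calg$ is a perfect matching, $\ell=2$, and \eqref{e3.4} is immediate from the definition of the box norm and independence; if $p=\infty$ then $\ell=2$, no integrability obstruction arises, and the standard two-step Cauchy--Schwarz argument (as in \cite[Appendix~B]{GT2}) applies verbatim with part~(d) of Proposition~\ref{p2.1} doing the bookkeeping.

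The main obstacle I anticipate is the exponent bookkeeping: one must verify that at \emph{every} application of H\"older --- and there are several, nested --- the exponents attached to each $f_e$ (which depend on how $e$ intersects $e_0$, and may involve also the constant functions padding out empty slots) are conjugate and lie in the allowed range $[1,p]$, and that the resulting product over $\calg'$-subsets is genuinely of the form covered by \eqref{e3.3}. A clean way to organize this is to do the two duplications in one combined step over $\{0,\dots,\ell-1\}^{e_0}$, so that each $f_e$ with $|e\cap e_0|=1$ is raised to the power $\ell^{|e_0\setminus e|}=\ell$ in the "wrong" direction and $\ell'$ in the box-norm direction --- matching precisely the structure of \eqref{e2.4} --- and each $f_e$ with $e\cap e_0=\emptyset$ is handled by $|\ave[\cdots]|\mik \ave[|\cdots|^{\ell^2}]^{1/\ell^2}$ paired against $\ave[(\prod_{e\cap e_0=\emptyset}|f_e|)^{\ell'^2/\cdots}]$; one then checks $\ell'^{\,2}$ (in fact $\ell'^{\,\Delta-1}$, absorbing the worst vertex) is $\mik p$ using \eqref{e3.1}. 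I would isolate this arithmetic as a short preliminary computation before starting the duplication, so the rest of the argument reads as a routine iterated Cauchy--Schwarz.
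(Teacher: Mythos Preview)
Your two-step Cauchy--Schwarz strategy is exactly the paper's approach, and you correctly single out the arithmetic fact $(\ell')^{\Delta-1}\mik p$ following from \eqref{e3.1}. But your accounting of which hypothesis controls which factor is off, and this hides the one nontrivial sub-argument in the proof.

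After the H\"older step in $x_{i_0}$, the edges \emph{not} containing $i_0$ --- this includes both the edges through $j_0$ and the edges disjoint from $e_0$ --- are bounded in a single stroke by \eqref{e3.3} with $\calg'=\{e'\in\calg:i_0\notin e'\}$; this contributes the factor $C$, and it is the \emph{only} place \eqref{e3.3} is invoked (so the edges disjoint from $e_0$ are raised to $\ell'$, not to $\ell'^{\,2}$). After the second H\"older step in $x_{j_0}$, the $\ell$ copies of $\mathbf{f}_{e_0}$ assemble directly into $\|\mathbf{f}_{e_0}\|_{\square_\ell(\bbx_{e_0})}^\ell$, without any appeal to \eqref{e3.2}. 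What remains is
\[
I_{\calg(i_0)}=\ave\Big[\prod_{e'\in\calg(i_0)}\,\prod_{\omega=0}^{\ell-1}|\mathbf{f}_{e'}|^{\ell'}\big(x_{i_0}^{(\omega)},x_{e'\setminus\{i_0\}}\big)\Big],
\qquad \calg(i_0)=\{e'\neq e_0:i_0\in e'\},
\]
and this is \emph{not} of the form $\|\prod_{e'\in\calg'}f_{e'}\|_{L_p}$: it carries $\ell$ duplicated $x_{i_0}$-copies and an $\ell'$-th power on each factor, so \eqref{e3.3} does not apply. The paper bounds $I_{\calg(i_0)}\mik 1$ by a further iterated H\"older argument (its Claim~3.4), peeling the $m=|\calg(i_0)|\mik\Delta-1$ edges of $\calg(i_0)$ off one at a time; each peel multiplies the surviving exponent by $\ell'$, and the last edge is controlled via the $\square_{\ell,p}$-hypothesis \eqref{e3.2}. \emph{This} is where $(\ell')^{\Delta-1}\mik p$ actually enters --- for the neighbours of $i_0$, not for the edges disjoint from $e_0$ --- and it is \eqref{e3.2} for those neighbouring edges, not for $e_0$ itself, that closes the estimate. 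Your proposed ``combined duplication over $\{0,\dots,\ell-1\}^{e_0}$'' via \eqref{e2.4} does not sidestep this: applying \eqref{e2.4} pointwise in the remaining coordinates leaves an integral of $\|u\|_{L_p(X_{i_0})}\|v\|_{L_p(X_{j_0})}$ against the disjoint-edge product, and that is again not directly controlled by \eqref{e3.3}.
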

Observe that \eqref{e3.2} is an integrability condition; as we have already noted, this condition is necessary if $p<\Delta$.
On the other hand, condition \eqref{e3.3} is the analogue of the ``linear forms condition" appearing in several versions
of the generalized von Neumann theorem (see, e.g., \cite[Proposition 5.3]{GT1} and \cite[Theorem 3.8]{Tao2}).
\begin{proof}[Proof of Theorem \emph{\ref{t3.1}}]
Let $e\in\calg$ be arbitrary, and set
\[ I\coloneqq \ave\Big[ f_e\!\!\prod_{e' \in \calg\setminus\{e\}}\!\! f_{e'}\Big] \]
Clearly, it suffices to show that $|I| \mik  C\cdot \|\mathbf{f}_{e}\|_{\square_{\ell}(\bbx_e)}$.

To this end, we first observe that if $\Delta=1$, then the result is straightforward. Indeed, in this case we have $\ell=2$ and the edges
of $\calg$ are pairwise disjoint. Hence, by part~(b) of Proposition~\ref{p2.1} and part (c.ii) of Proposition \ref{p2.3}, we see that
\[ |I|= |\ave[\mathbf{f}_e]|\,\cdot\!\!\!\! \prod_{e'\in\calg\setminus\{e\}}\!\! |\ave[\mathbf{f}_{e'}]| \mik
\|\mathbf{f}_e\|_{\square_2(\bbx_e)}\, \cdot\!\!\!\! \prod_{e' \in \calg\setminus\{e\}}\!\! \|\mathbf{f}_{e'}\|_{\square_{2,p}(\bbx_{e'})}
\stackrel{\eqref{e3.2}}{\mik} C \cdot \|\mathbf{f}_e\|_{\square_2(\bbx_e)}. \]
Therefore, in what follows we will assume that $\Delta\meg 2$. To simplify the exposition we will also assume that $p\neq \infty$.
(The proof for the case $p=\infty$ is similar.) Write $e=\{i,j\}$, and set $\calg(i)=\{e'\in\calg\setminus\{e\}: i\in e'\}$ and
$\calg^*(i)=\{e'\in\calg\setminus\{e\}: i\notin e'\}$; notice that $\calg\setminus\{e\}=\calg(i)\cup \calg^*(i)$. Let $\ell'$
be the conjugate exponent of $\ell$  and observe that, by~\eqref{e3.1}, we have $\ell\meg q'$ where $q'$ is the conjugate
exponent of $p^{(\Delta-1)^{-1}}$. Hence,
\begin{equation} \label{e3.5}
1<\ell'\mik p^{(\Delta-1)^{-1}}\mik p.
\end{equation}
We set
\begin{equation} \label{e3.6}
I_{e,\calg(i)}= \ave\Big[\prod_{\omega=0}^{\ell-1}\mathbf{f}_{e}(x_i^{(\omega)},x_j)\!\! \prod_{e'\in \calg(i)}
\mathbf{f}_{e'}(x_i^{(\omega)}, x_{e'\setminus\{i\}})\Big]
\end{equation}
and
\begin{equation}\label{e3.7}
I_{\calg(i)}= \ave\Big[\prod_{e'\in\calg(i)}\prod_{\omega=0}^{\ell-1} |\mathbf{f}_{e'}|^{\ell'}(x_i^{(\omega)},x_{e'\setminus\{i\}})\Big]
\end{equation}
where both expectations are over all $x_i^{(0)},\dots, x_i^{(\ell-1)}\in X_i$ and $\bx_{[n]\setminus \{i\}}\in \bbx_{[n]\setminus \{i\}}$.
\begin{claim} \label{c3.2}
We have  $|I|\mik C\cdot I_{e,\calg(i)}^{1/\ell} $.
\end{claim}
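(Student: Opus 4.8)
The plan is to isolate the variable $x_i$, apply H\"{o}lder's inequality in the remaining variables, estimate the factor carried by the edges that avoid $i$ by the constant $C$, and recognize the other factor as $I_{e,\calg(i)}^{1/\ell}$ after expanding an $\ell$-th power into $\ell$ independent copies of $x_i$.

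Write $e=\{i,j\}$ and set $g\coloneqq \mathbf{f}_e(x_i,x_j)\prod_{e'\in\calg(i)}\mathbf{f}_{e'}(x_i,x_{e'\setminus\{i\}})$, regarded as a function of $x_i$ and of $\bx_{[n]\setminus\{i\}}$, and set $B\coloneqq \prod_{e'\in\calg^*(i)}f_{e'}$, which depends only on $\bx_{[n]\setminus\{i\}}$. Since every product $\prod_{e''\in\calg'}f_{e''}$ with $\calg'\subseteq\calg$ has finite $L_p$-norm by \eqref{e3.3} and we work on a probability space, all these products lie in $L_1(\bbx)$; in particular $g$ and $gB=\prod_{e''\in\calg}f_{e''}$ are integrable. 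Hence, by Fubini's theorem,
\[ I=\ave\big[A\cdot B\big],\qquad\text{where}\qquad A\coloneqq \ave\big[g\,\big|\,x_i\in X_i\big], \]
the (outer) expectation being over $\bx_{[n]\setminus\{i\}}\in\bbx_{[n]\setminus\{i\}}$ and $A$ being finite almost everywhere because $g\in L_1(\bbx)$.

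Next I would apply H\"{o}lder's inequality with the conjugate exponents $\ell,\ell'$ to obtain $|I|\mik \|A\|_{L_\ell}\cdot\|B\|_{L_{\ell'}}$. Since $1<\ell'\mik p$ by \eqref{e3.5}, the second factor satisfies $\|B\|_{L_{\ell'}}\mik\|B\|_{L_p}=\big\|\prod_{e'\in\calg^*(i)}f_{e'}\big\|_{L_p}\mik C$ by \eqref{e3.3}. For the first factor, $\ell$ is even, so $\|A\|_{L_\ell}^\ell=\ave[A^\ell]$, and for a.e.\ fixed $\bx_{[n]\setminus\{i\}}$, using that $x_i^{(0)},\dots,x_i^{(\ell-1)}$ are independent copies of $x_i$,
\[ A^\ell=\ave\big[g\,\big|\,x_i\in X_i\big]^\ell=\ave\Big[\prod_{\omega=0}^{\ell-1}g(x_i^{(\omega)})\,\Big|\,x_i^{(0)},\dots,x_i^{(\ell-1)}\in X_i\Big]. \]
Integrating over $\bx_{[n]\setminus\{i\}}$ and unfolding the definition of $g$ identifies $\ave[A^\ell]$ with the right-hand side of \eqref{e3.6}, that is, $\|A\|_{L_\ell}^\ell=I_{e,\calg(i)}$ (in particular $I_{e,\calg(i)}\meg 0$, and if it is infinite the claim is trivial since $|I|\mik\|\prod_{e''\in\calg}f_{e''}\|_{L_p}\mik C$). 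Combining the two estimates yields $|I|\mik C\cdot I_{e,\calg(i)}^{1/\ell}$, as desired.

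The argument is routine; the points that need a little attention are the interchanges of expectation — legitimate because \eqref{e3.3} puts all the relevant products of the $f_{e'}$'s in $L_1(\bbx)$, so that the quantity in \eqref{e3.6} is indeed the iterated expectation $\ave_{\bx_{[n]\setminus\{i\}}}\big[\ave[g\mid x_i\in X_i]^\ell\big]$ — and the inequality $\ell'\mik p$ coming from \eqref{e3.5}, which is the one place in the proof of the claim where the specific choice of $\ell$ in \eqref{e3.1} is used.
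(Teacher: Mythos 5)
Your proof is correct and follows essentially the same route as the paper: isolate $x_i$, apply H\"older with exponents $\ell,\ell'$, bound the $\calg^*(i)$ factor via $\ell'\mik p$ and \eqref{e3.3}, and identify $\ave[A^\ell]$ with $I_{e,\calg(i)}$ by expanding the $\ell$-th power into $\ell$ independent copies of $x_i$. The extra remarks on integrability and the trivial infinite case are sensible but not points of divergence.
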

\begin{proof}[Proof of Claim \emph{\ref{c3.2}}]
Since $i\notin e'$ for every $e'\in \calg^*(i)$, we have
\[ I=\ave\Big[\ave\big[\mathbf{f}_e(x_i, x_j)\!\!\prod_{e'\in\calg(i)}\mathbf{f}_{e'}(x_i, x_{e'\setminus\{i\}})\,\big|\, x_i \in X_i\big]
\,\cdot\!\!\! \prod_{e'\in \calg^*(i)}\!\!\mathbf{f}_{e'}(\bx_{e'})\Big]. \]
By H\"older's inequality, \eqref{e3.3}, \eqref{e3.5} and \eqref{e3.6}, we obtain that
\begin{eqnarray*}
|I| & \mik & \ave\Big[\ave\big[\mathbf{f}_e(x_i, x_j)\!\!\prod_{e'\in \calg(i)}\mathbf{f}_{e'}(x_i, x_{e'\setminus\{i\}})\,\big|\,
x_i\in X_i\big]^\ell\Big]^{1/\ell}\cdot \big\|\!\prod_{e'\in \calg^*(i)} f_{e'}\big\|_{L_{\ell'}} \\
& \mik & I_{e,\calg(i)}^{1/\ell} \cdot \big\|\! \prod_{e'\in \calg^*(i)} f_{e'}\big\|_{L_{p}} \mik
C\cdot I_{e,\calg(i)}^{1/\ell}
\end{eqnarray*}
as desired.
\end{proof}
We proceed with the following claim.
\begin{claim} \label{c3.3}
We have $I_{e,\calg(i)}\mik\|\mathbf{f}_{e}\|_{\square_{\ell}(\bbx_e)}^{\ell}\cdot I_{\calg(i)}^{1/\ell'}$.
\end{claim}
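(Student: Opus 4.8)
The plan is to deduce the inequality from one application of H\"older's inequality, taken with respect to the expectation over the $\ell$ copies $x_i^{(0)},\dots,x_i^{(\ell-1)}$ of the variable $x_i$ \emph{only}, after first integrating out all the remaining coordinates. Applying H\"older directly to the whole integral \eqref{e3.6} is not an option here: the functions $\mathbf{f}_{e'}$ with $e'\in\calg(i)$ do not involve the variable $x_j$, so separating $\prod_\omega\mathbf{f}_e$ from the rest at that level would destroy the $x_j$-average of $\mathbf{f}_e$ and produce an estimate in the wrong direction.

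The first step is to rewrite $I_{e,\calg(i)}$ in a ``separated'' form. Write $e=\{i,j\}$; for every $e'\in\calg(i)$ we have $i\in e'$ and $e'\ne e$, hence $j\notin e'$. Therefore, once $x_i^{(0)},\dots,x_i^{(\ell-1)}$ are fixed, the factor $\prod_{\omega=0}^{\ell-1}\mathbf{f}_e(x_i^{(\omega)},x_j)$ in \eqref{e3.6} depends only on $x_j$, whereas $\prod_{\omega=0}^{\ell-1}\prod_{e'\in\calg(i)}\mathbf{f}_{e'}(x_i^{(\omega)},x_{e'\setminus\{i\}})$ depends only on the coordinates $(x_k)_{k\in[n]\setminus\{i,j\}}$. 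Setting, for $\bar x=(x_i^{(0)},\dots,x_i^{(\ell-1)})\in X_i^\ell$,
\[ \Phi(\bar x)=\ave\Big[\prod_{\omega=0}^{\ell-1}\mathbf{f}_e(x_i^{(\omega)},x_j)\,\Big|\,x_j\in X_j\Big],\qquad
\Psi(\bar x)=\ave\Big[\prod_{\omega=0}^{\ell-1}\prod_{e'\in\calg(i)}\mathbf{f}_{e'}(x_i^{(\omega)},x_{e'\setminus\{i\}})\,\Big|\,\bx_{[n]\setminus\{i,j\}}\in\bbx_{[n]\setminus\{i,j\}}\Big], \]
Fubini's theorem gives $I_{e,\calg(i)}=\ave\big[\Phi(\bar x)\,\Psi(\bar x)\,\big|\,\bar x\in X_i^\ell\big]$.

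I would then apply H\"older's inequality with the conjugate exponents $\ell$ and $\ell'$ to this expectation, which gives
\[ I_{e,\calg(i)}\mik\big|I_{e,\calg(i)}\big|\mik\ave\big[|\Phi|^\ell\,\big|\,\bar x\in X_i^\ell\big]^{1/\ell}\cdot\ave\big[|\Psi|^{\ell'}\,\big|\,\bar x\in X_i^\ell\big]^{1/\ell'}. \]
The first factor equals $\|\mathbf{f}_e\|_{\square_\ell(\bbx_e)}^\ell$: unwinding the recursive description of the $\ell$-box norm with the vertex $i$ peeled off, i.e.\ using \eqref{e2.2} followed by the singleton case \eqref{e2.1}, one obtains $\ave[|\Phi|^\ell\,|\,\bar x\in X_i^\ell]=\|\mathbf{f}_e\|_{\square_\ell(\bbx_e)}^{\ell^2}$. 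For the second factor, fix $\bar x$ and apply Jensen's inequality to the convex function $t\mapsto|t|^{\ell'}$ (here $\ell'>1$):
\[ |\Psi(\bar x)|^{\ell'}\mik\ave\Big[\prod_{\omega=0}^{\ell-1}\prod_{e'\in\calg(i)}|\mathbf{f}_{e'}|^{\ell'}(x_i^{(\omega)},x_{e'\setminus\{i\}})\,\Big|\,\bx_{[n]\setminus\{i,j\}}\in\bbx_{[n]\setminus\{i,j\}}\Big]. \]
Averaging this over $\bar x\in X_i^\ell$ and noting that the right-hand side does not involve $x_j$ — so its inner expectation may equally be taken over $\bbx_{[n]\setminus\{i\}}$ — we recognize $I_{\calg(i)}$ as defined in \eqref{e3.7}; hence $\ave[|\Psi|^{\ell'}\,|\,\bar x\in X_i^\ell]\mik I_{\calg(i)}$. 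Substituting the two estimates into the H\"older bound yields $I_{e,\calg(i)}\mik\|\mathbf{f}_e\|_{\square_\ell(\bbx_e)}^\ell\cdot I_{\calg(i)}^{1/\ell'}$, which is the content of Claim \ref{c3.3}.

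The one point that calls for genuine care is the initial bookkeeping: one must observe that every edge of $\calg(i)$ avoids the vertex $j$ — this is exactly what makes the inner integral split as the product $\Phi\cdot\Psi$ — and one must run H\"older over the copies of $x_i$ rather than over the full integral. Once the integral has been organized this way, the rest is routine: the identification of $\ave[|\Phi|^\ell]$ with a power of $\|\mathbf{f}_e\|_{\square_\ell(\bbx_e)}$ through \eqref{e2.1}--\eqref{e2.2}, and a single application of Jensen's inequality to $\Psi$.
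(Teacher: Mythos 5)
Your proof is correct and follows essentially the same route as the paper: the key observation in both is that $j\notin e'$ for every $e'\in\calg(i)$, so the $x_j$-average can be pulled entirely onto the $\prod_\omega\mathbf{f}_e$ factor, after which H\"older with conjugate exponents $\ell,\ell'$ and the recursion \eqref{e2.1}--\eqref{e2.2} give the claim. The only cosmetic difference is that you integrate out $\bx_{[n]\setminus\{i,j\}}$ first and apply H\"older over $\bar x\in X_i^\ell$ alone, which then forces an extra Jensen step for $\Psi$; the paper instead applies H\"older jointly over all remaining variables, so that $\ave[|\Psi'|^{\ell'}]=I_{\calg(i)}$ appears directly as an identity rather than an inequality.
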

\begin{proof}[Proof of Claim \emph{\ref{c3.3}}]
Note that $j\notin e'$ for every $e'\in \calg(i)$, and so
\[ I_{e,\calg(i)} = \ave\Big[\ave\big[\prod_{\omega=0}^{\ell-1}\mathbf{f}_{e}(x_i^{(\omega)},x_j)\,\big|\, x_j \in X_j\big]
\,\cdot\!\! \prod_{e'\in \calg(i)} \prod_{\omega=0}^{\ell-1} \mathbf{f}_{e'}(x_i^{(\omega)}, x_{e'\setminus\{i\}})\Big]. \]
Using this observation the claim follows by H\"older's inequality and arguing precisely as in the proof of Claim \ref{c3.2}.
\end{proof}
The following claim is the last step of the proof.
\begin{claim} \label{c3.4}
We have $I_{\calg(i)}\mik 1$.
\end{claim}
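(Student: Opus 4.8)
I want to show that the quantity
\[ I_{\calg(i)}= \ave\Big[\prod_{e'\in\calg(i)}\prod_{\omega=0}^{\ell-1} |\mathbf{f}_{e'}|^{\ell'}(x_i^{(\omega)},x_{e'\setminus\{i\}})\Big] \]
is at most $1$, where the expectation is over $x_i^{(0)},\dots,x_i^{(\ell-1)}\in X_i$ and over $\bx_{[n]\setminus\{i\}}\in\bbx_{[n]\setminus\{i\}}$. The key structural observation is that, for $e'\in\calg(i)$, the function $|\mathbf{f}_{e'}|^{\ell'}$ depends only on the coordinate $x_i$ and on the single other coordinate $x_k$, where $\{k\}=e'\setminus\{i\}$; and, crucially, since $\Delta(\calg)=\Delta$, each vertex $k\neq i$ lies in at most $\Delta$ edges of $\calg$, but more to the point we will be able to keep the different factors ``independent over the non-$i$ coordinates'' after integrating out $x_i^{(0)},\dots,x_i^{(\ell-1)}$. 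The plan is therefore: first, use Fubini to pull the expectation over $x_i^{(0)},\dots,x_i^{(\ell-1)}$ inside, so that the $\bx_{[n]\setminus\{i\}}$-expectation sees, for each $e'\in\calg(i)$, the quantity
\[ \ave\Big[\prod_{\omega=0}^{\ell-1}|\mathbf{f}_{e'}|^{\ell'}(x_i^{(\omega)},x_k)\,\Big|\,x_i^{(0)},\dots,x_i^{(\ell-1)}\in X_i\Big], \]
which by the recursive formula \eqref{e2.2} (applied with $f$ replaced by $|\mathbf{f}_{e'}|^{\ell'}$ on the doubleton $e'$, integrating out the $i$-coordinate) equals $\big\||\mathbf{f}_{e'}|^{\ell'}(\,\cdot\,,x_k)\big\|^{\ell}_{\square_\ell(X_k)}$; but on a singleton the $\square_\ell$-norm is just $|\ave|$ by \eqref{e2.1}, so this is $\ave\big[|\mathbf{f}_{e'}|^{\ell'}(x_i,x_k)\,\big|\,x_i\in X_i\big]^{\ell}$.

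**Completing the estimate.** After this step $I_{\calg(i)}$ has become an expectation over the coordinates $(x_k)_{k\in N}$, where $N=\{e'\setminus\{i\}:e'\in\calg(i)\}$, of a product of factors, one per $e'\in\calg(i)$, each depending on exactly one $x_k$. Since these factors involve \emph{distinct} single coordinates is \emph{not} guaranteed — two edges $e',e''\in\calg(i)$ both contain $i$ but might also share their other endpoint only if $e'=e''$, so in fact the other endpoints $e'\setminus\{i\}$ are pairwise distinct as $e'$ ranges over $\calg(i)$. Hence the expectation factorizes as a product over $e'\in\calg(i)$ of
\[ \ave\Big[\ave\big[|\mathbf{f}_{e'}|^{\ell'}(x_i,x_k)\,\big|\,x_i\in X_i\big]^{\ell}\,\Big|\,x_k\in X_k\Big] = \ave\Big[\prod_{\omega=0}^{\ell-1}|\mathbf{f}_{e'}|^{\ell'}(x_i^{(\omega)},x_k)\Big] = \big\||\mathbf{f}_{e'}|^{\ell'}\big\|^{\ell}_{\square_\ell(\bbx_{e'})}, \]
the last equality again by \eqref{e2.2} on the doubleton $e'$. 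By definition \eqref{e2.11} of the $(\ell,\ell')$-box norm, $\big\||\mathbf{f}_{e'}|^{\ell'}\big\|_{\square_\ell(\bbx_{e'})}=\|\mathbf{f}_{e'}\|^{\ell'}_{\square_{\ell,\ell'}(\bbx_{e'})}$, so each factor is $\|\mathbf{f}_{e'}\|^{\ell\ell'}_{\square_{\ell,\ell'}(\bbx_{e'})}$. Finally, since $\ell'\mik p^{(\Delta-1)^{-1}}\mik p$ by \eqref{e3.5}, part~(c.ii) of Proposition~\ref{p2.3} gives $\|\mathbf{f}_{e'}\|_{\square_{\ell,\ell'}(\bbx_{e'})}\mik\|\mathbf{f}_{e'}\|_{\square_{\ell,p}(\bbx_{e'})}\mik 1$ by hypothesis \eqref{e3.2}. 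Multiplying the (nonnegative) factors over $e'\in\calg(i)$ yields $I_{\calg(i)}\mik 1$.

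**The main obstacle.** The genuinely delicate point is the bookkeeping in the factorization step: one must be sure that after integrating out $x_i^{(0)},\dots,x_i^{(\ell-1)}$ the residual integrand over $\bx_{[n]\setminus\{i\}}$ really is a product of functions living on pairwise disjoint single coordinates, so that the expectation splits as a product — this is where the fact that distinct edges through $i$ have distinct ``far endpoints'' is used, and it is the only place the structure of $\calg(i)$ matters. Everything else is a mechanical chain: Fubini, two applications of the recursive identity \eqref{e2.2}, the definition \eqref{e2.11}, the monotonicity \eqref{e3.5} together with Proposition~\ref{p2.3}(c.ii), and the normalization \eqref{e3.2}. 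I should also be slightly careful that all the quantities in sight are nonnegative (they are, being integrals of products of absolute values), which is what makes the factorization and the termwise bound $\mik 1$ legitimate without any extra conditions.
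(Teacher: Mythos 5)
Your proof has a fatal gap at the very first step. The quantity $I_{\calg(i)}$ is an expectation over $x_i^{(0)},\dots,x_i^{(\ell-1)}$ (and over the remaining coordinates) of a product over $e'\in\calg(i)$, where \emph{every} factor of that product depends on \emph{all} of the shared variables $x_i^{(0)},\dots,x_i^{(\ell-1)}$. Your first step claims that the $x_i$-expectation can be pulled inside the product over $e'$, i.e.\ that
\[
\ave\Big[\prod_{e'\in\calg(i)}\prod_{\omega=0}^{\ell-1}|\mathbf{f}_{e'}|^{\ell'}(x_i^{(\omega)},x_{k_{e'}})\,\Big|\,x_i^{(0)},\dots,x_i^{(\ell-1)}\Big]
\ =\ \prod_{e'\in\calg(i)}\ave\Big[\prod_{\omega=0}^{\ell-1}|\mathbf{f}_{e'}|^{\ell'}(x_i^{(\omega)},x_{k_{e'}})\,\Big|\,x_i^{(0)},\dots,x_i^{(\ell-1)}\Big],
\]
which is an expectation-of-a-product versus product-of-expectations confusion and is false as soon as $|\calg(i)|\meg 2$. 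Concretely, with $X_i=X_{k_1}=X_{k_2}=\{0,1\}$ uniform, $\mathbf{f}_{e'}(x_i,x_k)=2\cdot\mathbf{1}[x_i=x_k]$, $\calg(i)=\{\{i,k_1\},\{i,k_2\}\}$ and $\ell=\ell'=2$, one computes $I_{\calg(i)}=32$ while your factorized expression equals $16$; the error even goes in the \emph{wrong} direction for the upper bound you are after. What is true is that the $x_i$-expectation factorizes over $\omega$ (not over $e'$), so integrating out $x_i^{(0)},\dots,x_i^{(\ell-1)}$ leaves $\ave_{(x_k)}[h^\ell]$ with $h=\ave_{x_i}\big[\prod_{e'}|\mathbf{f}_{e'}|^{\ell'}(x_i,x_{k_{e'}})\big]$, which does not split over the edges. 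Handling this coupling is exactly what the paper's proof does via the quantities $Q_d$ with escalating exponents $(\ell')^d$ and the telescoping chain $Q_d\mik Q_{d+1}^{1/\ell'}$ obtained by iterated H\"older; it is also why the hypothesis in \eqref{e3.5} forces $\ell'\mik p^{(\Delta-1)^{-1}}$ rather than just $\ell'\mik p$, since up to $\Delta-1$ H\"older steps each multiply the exponent by $\ell'$ and one needs $(\ell')^{\Delta-1}\mik p$ at the end. As a secondary point, the equality $\ave_{x_k}\ave_{x_i}[|\mathbf{f}_{e'}|^{\ell'}]^\ell=\||\mathbf{f}_{e'}|^{\ell'}\|^\ell_{\square_\ell(\bbx_{e'})}$ in your ``Completing the estimate'' step is also not an equality (the left side has a single replica of $x_k$ while the box norm replicates $x_k$ as well); it does hold as an inequality by Gowers--Cauchy--Schwarz, so that alone would not be fatal, but the spurious factorization over $e'$ is.
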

\begin{proof}[Proof of Claim \emph{\ref{c3.4}}]
We may assume, of course, that $\calg(i)$ is nonempty. We set $m=|\calg(i)|$ and we observe that $1\mik m\mik \Delta-1$.
Therefore, by \eqref{e3.5}, we see that
\begin{equation} \label{e3.8}
1<(\ell')^r\mik (\ell')^{\Delta-1}\mik p
\end{equation}
for every $r\in [m]$. Write $\calg(i)=\{e'_1,\dots,e'_m\}$ and for every $r\in [m]$ let $j_r\in [n]$ such that
$e'_r=\{i,j_r\}$. For every $d\in [m]$ set
\begin{equation} \label{e3.9}
Q_d=\ave\Big[\prod_{r=d}^m\, \prod_{\omega=0}^{\ell-1} |\mathbf{f}_{e'_r}|^{(\ell')^d}(x_i^{(\omega)},x_{j_r}) \Big]
\end{equation}
and note that
\begin{equation} \label{e3.10}
Q_1=I_{\calg(i)} \ \text{ and } \ Q_m=\ave\Big[\prod_{\omega=0}^{\ell-1}|\mathbf{f}_{e'_m}|^{(\ell')^m}(x_i^{(\omega)},x_{j_m})\Big].
\end{equation}
(Here, the expectation is over all $x_i^{(0)},\dots, x_i^{(\ell-1)}\in X_i$ and $\bx_{[n]\setminus \{i\}}\in \bbx_{[n]\setminus \{i\}}$.)
Now observe that it is enough to show that for every $d\in [m-1]$ we have
\begin{equation} \label{e3.11}
Q_d \mik Q_{d+1}^{1/\ell'}.
\end{equation}
Indeed, by \eqref{e3.11}, we see that $Q_1\mik Q_m^{1/(\ell')^{m-1}}$. Hence, by \eqref{e3.10}, the monotonicity of the $L_p$ norms
and part (a) of Proposition \ref{p2.3}, we obtain that
\begin{eqnarray} \label{e3.12}
I_{\calg(i)} & \mik & \ave\Big[\prod_{\omega=0}^{\ell-1}|\mathbf{f}_{e'_m}|^{(\ell')^m}(x_i^{(\omega)},x_{j_m})\Big]^{\ell'/(\ell')^m} \\
& \stackrel{ \eqref{e3.8}}{\mik} & \ave\Big[\prod_{\omega=0}^{\ell-1}|\mathbf{f}_{e'_m}|^{p}(x_i^{(\omega)},x_{j_m})\Big]^{\ell'/p} \mik
\|\mathbf{f}_{e'_m}\|_{\square_{\ell,p}(\bbx_{e'_m})}^{\ell\ell'} \stackrel{\eqref{e3.2}}{\mik} 1. \nonumber
\end{eqnarray}
It remains to show \eqref{e3.11}. Fix $d\in [m-1]$ and notice that $j_d\notin e'_r$ for every $r\in \{d+1,\dots,m\}$. Thus,
\[ Q_d= \ave\Big[ \ave\big[\prod_{\omega=0}^{\ell-1}|\mathbf{f}_{e'_d}|^{(\ell')^d}(x_i^{(\omega)},x_{j_d})\,\big| \, x_{j_d} \in X_{j_d}\big]
\,\cdot\!\!\prod_{r=d+1}^m \prod_{\omega=0}^{\ell-1} |\mathbf{f}_{e'_r}|^{(\ell')^d}(x_i^{(\omega)},x_{j_r}) \Big]. \]
By H\"older's inequality and arguing as in the proof of \eqref{e3.12}, we see that
\[ Q_d \mik \ave\Big[\prod_{\omega\in\{0,\dots,\ell-1\}^{e'_{\!d}}}\!\!\! |\mathbf{f}_{e'_d}|^{(\ell')^d}(\bx_{e'_d}^{(\omega)})\Big]^{1/\ell}
\cdot Q_{d+1}^{1/\ell'} \mik \|\mathbf{f}_{e'_d}\|_{\square_{\ell,p}(\bbx_{e'_{\!d}})}^{\ell(\ell')^d} \cdot Q_{d+1}^{1/\ell'} \]
as desired.
\end{proof}
By Claims \ref{c3.2}, \ref{c3.3} and \ref{c3.4}, we conclude that $|I| \mik  C\cdot \|\mathbf{f}_{e}\|_{\square_{\ell}(\bbx_e)}$,
and so the entire proof of Theorem \ref{t3.1} is completed.
\end{proof}
We close this section with the following counting lemma for $L_p$~graphons. It follows readily by Theorem \ref{t3.1} and a telescopic argument.
\begin{cor} \label{c3.5}
Let $\Delta, C, p, \ell$ and $\mathscr{G}$ be as in Theorem \emph{\ref{t3.1}}. For every $e \in \calg$ let $f_e, g_e \in L_p(\bbx,\calb_e,\bmu)$ such that
$\|\mathbf{f}_e\|_{\square_{\ell, p}(\bbx_e)}\mik 1$ and $\|\mathbf{g}_e\|_{\square_{\ell, p}(\bbx_e)}\mik 1$ where $\mathbf{f}_e$ and $\mathbf{g}_e$ are
as in \eqref{e1.5} for $f_e$ and $g_e$ respectively. Assume that for every $\calg_1,\calg_2 \subseteq \calg$ with $\calg_1 \cap \calg_2=\emptyset$ we have
$\| \prod_{e \in \calg_1} f_e \prod_{e \in \calg_2}g_e\|_{L_p} \mik C$. Then we have
\begin{equation}\label{e3.13}
\Big|\ave\Big[\prod_{e \in \calg}f_e\Big]-\ave\Big[\prod_{e \in \calg}g_e\Big]\Big|\mik
C\cdot \sum_{e\in\calg}\|\mathbf{f}_e - \mathbf{g}_e\|_{\square_\ell(\bbx_e)}.
\end{equation}
\end{cor}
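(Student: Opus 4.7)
The plan is a standard telescoping argument that reduces the claim to $m := |\calg|$ applications of Theorem~\ref{t3.1}. Enumerate $\calg = \{e_1, \dots, e_m\}$ and use the identity
\begin{equation*}
\prod_{e \in \calg} f_e \,-\, \prod_{e \in \calg} g_e \;=\; \sum_{k=1}^{m} \Big(\prod_{i<k} g_{e_i}\Big)(f_{e_k} - g_{e_k})\Big(\prod_{i>k} f_{e_i}\Big).
\end{equation*}
By the triangle inequality for the expectation it suffices to prove, for each $k$, that the absolute value of the $k$-th summand is bounded by $C\|\mathbf{f}_{e_k}-\mathbf{g}_{e_k}\|_{\square_\ell(\bbx_{e_k})}$.

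Fix $k\in[m]$ and define a hybrid family $(h^{(k)}_e)_{e \in \calg}$ on $\bbx$ by $h^{(k)}_{e_i}=g_{e_i}$ if $i<k$, $h^{(k)}_{e_i}=f_{e_i}$ if $i>k$, and $h^{(k)}_{e_k} = \tfrac12(f_{e_k}-g_{e_k})$. Then the $k$-th summand equals $2\,\ave\bigl[\prod_{e\in\calg} h^{(k)}_e\bigr]$, so the task becomes verifying the two hypotheses of Theorem~\ref{t3.1} for $(h^{(k)}_e)_{e\in\calg}$ with the same constant $C$. For \eqref{e3.2}: when $e\neq e_k$, $\mathbf{h}^{(k)}_e$ is either $\mathbf{f}_e$ or $\mathbf{g}_e$, so its $(\ell,p)$-box norm is at most $1$; when $e=e_k$, the triangle inequality for $\|\cdot\|_{\square_{\ell,p}(\bbx_{e_k})}$ (which is a norm by Proposition~\ref{p2.3}(c), since $\calg$ is $2$-uniform) gives $\|\mathbf{h}^{(k)}_{e_k}\|_{\square_{\ell,p}(\bbx_{e_k})} \mik \tfrac12(1+1) = 1$. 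For \eqref{e3.3}, fix $\calg' \subseteq \calg$: if $e_k\notin\calg'$, then $\prod_{e\in\calg'}h^{(k)}_e$ is already of the form $\prod_{e\in\calg_1}f_e\prod_{e\in\calg_2}g_e$ with disjoint $\calg_1,\calg_2$, and the hypothesis of the corollary yields $L_p$-norm at most $C$; if $e_k\in\calg'$, I distribute $f_{e_k}-g_{e_k}$ and apply the triangle inequality in $L_p$ to obtain two disjoint-form products each bounded by $C$, giving the total bound $\tfrac12(C+C)=C$.

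Theorem~\ref{t3.1} applied to $(h^{(k)}_e)_{e\in\calg}$ now yields
\begin{equation*}
\Big|\ave\Big[\prod_{e\in\calg} h^{(k)}_e\Big]\Big| \,\mik\, C \cdot \min_{e \in \calg} \|\mathbf{h}^{(k)}_e\|_{\square_\ell(\bbx_e)} \,\mik\, C \cdot \|\mathbf{h}^{(k)}_{e_k}\|_{\square_\ell(\bbx_{e_k})} \,=\, \tfrac{C}{2}\|\mathbf{f}_{e_k}-\mathbf{g}_{e_k}\|_{\square_\ell(\bbx_{e_k})}.
\end{equation*}
Multiplying by the factor $2$ absorbed into $h^{(k)}_{e_k}$ and summing over $k\in[m]$ produces \eqref{e3.13}. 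The argument presents no real obstacle; the only delicate bookkeeping is the rescaling by $\tfrac12$ of the $k$-th difference, needed to preserve the unit-ball condition \eqref{e3.2} while still recovering \eqref{e3.3} from the disjoint-product hypothesis of the corollary via splitting.
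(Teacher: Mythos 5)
Your proof is correct and is precisely the telescoping argument the paper invokes (the paper only remarks that the corollary ``follows readily by Theorem \ref{t3.1} and a telescopic argument'' without giving details). The rescaling of the $k$-th difference by $\tfrac12$ is the right bookkeeping device to keep both hypotheses \eqref{e3.2} and \eqref{e3.3} intact with the same constant $C$, and your verification of \eqref{e3.3} via splitting the difference into two disjoint-form products is exactly what the disjointness hypothesis of the corollary is designed for.
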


%---------------------Pseudorandom families-------------------%

\section{Pseudorandom families}

\numberwithin{equation}{section}

\subsection{ \ }

We begin by introducing some pieces of notation. Let $n,r$ be two positive integers with $n\meg r\meg 2$ and let
$\mathscr{H}=(n,\langle(X_i,\Sigma_i,\mu_i):i\in [n]\rangle,\calh)$ be an $r$-uniform hypergraph system.
Given $e\in\calh$ let $\partial e=\{e'\subseteq e: |e'|=|e|-1\}$ and set
\begin{equation} \label{e4.1}
\cals_{\partial e}\coloneqq \Big\{ \bigcap_{e'\in\partial e} A_{e'}: A_{e'}\in\calb_{e'} \text{ for every } e'\in\partial e\Big\}
\subseteq \calb_e.
\end{equation}
Also recall that for every $f\in L_1(\bbx,\calb_e,\bmu)$ the \emph{cut norm} of $f$ is defined by
\begin{equation} \label{e4.2}
\|f\|_{\cals_{\partial e}} =\sup\Big\{ \big| \int_A f\, d\bmu \big|: A\in\cals_{\partial e}\Big\}.
\end{equation}
The cut norm is a standard tool in extremal combinatorics (see, e.g., \cite{FK,L,Tao1}). It is weaker than the box norm,
but for bounded functions these two norms are essentially equivalent (see \cite[Theorem 4.1]{Go2}).

The following class of sparse weighted uniform hypergraphs was introduced in \cite[Definition 6.1]{DKK}.
\begin{defn} \label{d4.1}
Let $n,r\in\nn$ with $n\meg r\meg 2$, and let $C\meg 1$ and $0<\eta<1$. Also let $1<p\mik\infty$ and let $q$ denote the conjugate
exponent of $p$. Finally, let $\mathscr{H}=(n,\langle(X_i,\Sigma_i,\mu_i):i\in [n]\rangle,\calh)$ be an $r$-uniform hypergraph
system. For every $e\in\calh$ let $\nu_e\in L_1(\bbx,\calb_e,\bmu)$ be a nonnegative random variable. We say that the family
$\langle \nu_e:e\in\calh\rangle$ is \emph{$(C,\eta,p)$-pseudorandom} if the following hold.
\begin{enumerate}
\item[(C1)] For every nonempty $\calg\subseteq\calh$ we have $\ave\big[\prod_{e\in\calg} \nu_e\big]\meg 1-\eta$.
\item[(C2)] For every $e\in\calh$ there exists $\psi_e\in L_p(\bbx,\calb_e,\bmu)$ with $\|\psi_e\|_{L_p} \mik C$
and satisfying the following properties.
\begin{enumerate}
\item[(a)] We have $\|\nu_e-\psi_e\|_{\cals_{\partial e}}\mik \eta$.
\item[(b)] For every $e'\in\calh\setminus\{e\}$ and every $\omega\in\{0,1\}$ let $g_{e'}^{(\omega)}\in L_1(\bbx,\calb_{e'},\bmu)$
such that either $0 \mik g_{e'}^{(\omega)} \mik \nu_{e'}$ or\, $0 \mik g_{e'}^{(\omega)}\mik 1$. Let $\boldsymbol{\nu}_e$ and
$\boldsymbol{\psi}_e$ be as in \eqref{e1.5} for $\nu_e$ and $\psi_e$ respectively. Then we have
\[ \Big|\ave\Big[ (\boldsymbol{\nu}_e-\boldsymbol{\psi}_e)(\bx_e)\!\! \prod_{\omega\in\{0,1\}}
\ave\big[\!\!\!\!\!\!\prod_{e'\in\calh\setminus\{e\}}\!\!\!\!\!\! g_{e'}^{(\omega)}(\bx_e,\bx_{[n]\setminus e})\,\big|\,
\bx_{[n]\setminus e}\in \boldsymbol{X}_{[n]\setminus e}\big] \Big|\, \bx_e\in\boldsymbol{X}_e\Big]\Big|\mik \eta. \]
\end{enumerate}
\item[(C3)] Let $e\in\calh$ and let $\calg\subseteq\calh\setminus \{e\}$ be nonempty, and define
$\boldsymbol{\nu}_{e,\calg}\colon\bbx_e \to \rr$ by $\boldsymbol{\nu}_{e,\calg}(\bx_e)=\ave\big[ \prod_{e'\in\calg}
\nu_{e'}(\bx_e,\bx_{[n]\setminus e}) \, \big|\, \bx_{[n]\setminus e}\in \boldsymbol{X}_{[n]\setminus e}\big]$. Then, setting
\begin{equation} \label{e4.3}
\ell \coloneqq \min\Big\{2n :n\in\nn \text{ and } 2n \meg 2q + \Big(1 -\frac{1}{C }\Big) + \frac{1}{p}\Big\}
\end{equation}
$($where $1/p=0$ if $p=\infty$$)$, we have\, $\ave[\boldsymbol{\nu}_{e,\calg}^{\ell}]\mik C+\eta$.
\end{enumerate}
\end{defn}
We note that closely related definitions were introduced in \cite{CFZ1,Tao2} and we refer the reader to \cite[Section 6]{DKK} for a detailed
discussion on conditions (C1)--(C3) and their relation with the notions of pseudorandomness appearing in \cite{CFZ1,Tao2}. As we have mentioned
in the introduction, the most important property of pseudorandom families is that they satisfy relative versions of the counting and removal lemmas;
see, in particular, \cite[Theorems 2.2~and 7.1]{DKK}.

\subsection{Motivation}

In the second part of this paper our goal is to give examples of pseudorandom families.
We have already pointed out that these examples can be seen as deviations (in an $L_p$-sense) of weighted hypergraphs which satisfy
the linear forms condition. This fact is not accidental. Indeed, by \cite[Theorem 2.1]{DKK}, under quite general hypotheses one can
decompose a nonnegative random variable $\nu_e$ as $s_e+ u_e$ where $s_e$ belongs to $L_p$ and $u_e$ has negligible cut norm. Unfortunately,
this information is not strong enough to yield that the weighted hypergraph $\langle \nu_e:e\in\calh\rangle$ satisfies relative versions
of the counting and removal lemmas. However, as we shall see, this problem can be bypassed by imposing slightly stronger integrability
conditions on each $s_e$ and assuming that the random variables $\langle u_e:e\in\calh\rangle$ are very mildly correlated.

\subsection{The first main result}

The following theorem is our first result in this section. Its proof is given in Section 5.
\begin{thm} \label{t4.2}
Let $n\in\nn$ with $n\meg 3$, $C\meg 1$ and $1<p\mik\infty$, and let $\ell$ be as~in~\eqref{e4.3}. Also let $0<\eta\mik (4C)^{-n\ell^n}$
and let $\mathscr{H}=(n,\langle(X_i,\Sigma_i,\mu_i):i\in [n]\rangle,\calh)$ be a hypergraph system with $\calh=K^{(n-1)}_n={[n]\choose n-1}$.
$($In particular, we have that $\mathscr{H}$ is $(n-1)$-uniform.$)$ For every $e \in \calh$ let $\lambda_e\in L_1(\bbx,\calb_e,\bmu)$ and
$\varphi_e\in L_p(\bbx,\calb_e,\bmu)$ be nonnegative random variables, and let $\boldsymbol{\lambda}_e$ and $\boldsymbol{\varphi}_e$
be as in \eqref{e1.5} for $\lambda_e$ and $\varphi_e$ respectively. Assume that the following conditions are satisfied.
\begin{enumerate}
\item[(I)] We have
\begin{equation} \label{e4.4}
1-\eta \mik \ave\Big[ \prod_{e \in \calh} \ \prod_{\omega \in \{0,\ldots,\ell -1 \}^e }\!\! \boldsymbol{\lambda}_e ^{n_{e,\omega}}
(\bx_e^{(\omega)})\, \Big|\, \bx^{(0)},\dots,\bx^{(\ell-1)}\in \boldsymbol{X}\Big]\mik 1 + \eta
\end{equation}
for any choice of $n_{e,\omega}\in \{0,1\}$.
\item[(II)] For every $e\in\calh$ we have $\|\boldsymbol{\varphi}_e\|_{\square_{\ell,p}(\bbx_e)}\mik C$.
\end{enumerate}
Then the family $\langle \lambda_e+\varphi_e:e\in \calh\rangle$ is $(C',\eta',p)$-pseudorandom where $C'=(4C)^{n\ell}$ and
$\eta'=(4C)^{n\ell}\,\eta^{1/\ell^{n-1}}$.
\end{thm}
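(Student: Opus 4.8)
The plan is to verify conditions (C1), (C2) and (C3) of Definition~\ref{d4.1} for the family $\langle \nu_e : e\in\calh\rangle$ where $\nu_e\coloneqq \lambda_e+\varphi_e$, with $\psi_e\coloneqq \varphi_e$ playing the role of the $L_p$-part in (C2) and $\lambda_e$ playing the role of the ``error of negligible cut norm''. The first preparatory observation is a set of crude bounds extracted from hypotheses (I) and (II): applying \eqref{e4.4} with all $n_{e,\omega}$ equal to $1$ for a single edge gives $\|\boldsymbol{\lambda}_e\|_{\square_\ell(\bbx_e)}^{\ell^{|e|}}\mik 1+\eta$; taking instead only $n_{e,\omega}=1$ for $\omega=0^e$ gives $\ave[\lambda_e]\mik (1+\eta)^{1/\ell^{|e|}}$, and more generally \eqref{e4.4} controls all the mixed moments $\ave[\prod_{e}\lambda_e^{a_e}]$ for $a_e\in\{0,1\}$. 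Since $\eta\mik(4C)^{-n\ell^n}$ is extremely small, each such quantity is at most, say, $2$, and in fact within $\eta$ of $1$ for the ``all exponents present'' products. Combining with (II) and the triangle inequality for $\|\cdot\|_{\square_{\ell,p}}$ (Proposition~\ref{p2.3}(c)) one gets $\|\boldsymbol{\nu}_e\|_{\square_{\ell,p}(\bbx_e)}\mik C+1\mik 2C$ and $\|\boldsymbol{\varphi}_e\|_{L_p}\mik\|\boldsymbol{\varphi}_e\|_{\square_{\ell,p}(\bbx_e)}\mik C$ by Proposition~\ref{p2.3}(c.i). These are the building blocks; the constants $C'=(4C)^{n\ell}$ and $\eta'=(4C)^{n\ell}\eta^{1/\ell^{n-1}}$ are chosen with enough slack to absorb everything.

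Next I would check the three conditions in turn. Condition (C1) asks that $\ave[\prod_{e\in\calg}\nu_e]\meg 1-\eta'$ for nonempty $\calg\subseteq\calh$: expanding $\prod_{e\in\calg}(\lambda_e+\varphi_e)$ and using nonnegativity of all $\lambda_e,\varphi_e$, the sum is bounded below by the single term $\ave[\prod_{e\in\calg}\lambda_e]$, which by \eqref{e4.4} (the choice $n_{e,\omega}=\mathbf 1_{[\omega=0^e,\,e\in\calg]}$, noting $\bx_e^{(0^e)}=\bx_e^{(0)}$) is at least $1-\eta\meg 1-\eta'$. Condition (C2)(a), $\|\nu_e-\psi_e\|_{\cals_{\partial e}}=\|\lambda_e\|_{\cals_{\partial e}}\mik\eta'$, is the crux and I address it below. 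For (C2)(b) one must bound $|\ave[(\boldsymbol{\nu}_e-\boldsymbol{\psi}_e)\prod_{\omega\in\{0,1\}}\ave[\prod_{e'\neq e}g_{e'}^{(\omega)}\mid\cdot]]|=|\ave[\boldsymbol{\lambda}_e\cdot\prod_{\omega}G_\omega]|$ where $0\mik G_\omega\mik 1$ pointwise after the conditional expectation (since each $g_{e'}^{(\omega)}$ is bounded by $1$ or by $\nu_{e'}$, and the product over $e'\neq e$ of terms bounded by $\nu_{e'}$ is still... no: here one should note that because at least one factor is bounded by $1$ when all are, but in the mixed case the product can be large; the right move is to observe that $G_\omega$ is a conditional expectation of a product of $\calb_{e'}$-measurable functions and to apply a ``box-norm controls cut-type averages'' argument, or more simply to bound $\prod_\omega G_\omega$ by a single indicator-like object and reduce to the cut norm of $\lambda_e$ over a set in $\cals_{\partial e}$). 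Since $\lambda_e$ is $\calb_e$-measurable and each $G_\omega$ is (after conditioning) a function only of $\bx_e$ built from $\partial e$-measurable pieces, the quantity is $\le \|\lambda_e\|_{\cals_{\partial e}}$ up to a bounded multiplicative constant, hence $\mik\eta'$ once (C2)(a) is in hand. Finally (C3) requires $\ave[\boldsymbol{\nu}_{e,\calg}^{\ell}]\mik C'+\eta'$ where $\boldsymbol{\nu}_{e,\calg}(\bx_e)=\ave[\prod_{e'\in\calg}(\boldsymbol{\lambda}_{e'}+\boldsymbol{\varphi}_{e'})\mid\cdot]$; expanding the product into $2^{|\calg|}$ terms, raising to the $\ell$-th power and distributing via the multinomial / generalized Hölder inequality, every resulting expectation is a mixed moment of the form $\ave[\prod_{e'}\boldsymbol{\lambda}_{e'}^{a_{e',\omega}}\boldsymbol{\varphi}_{e'}^{b_{e',\omega}}]$ over the $\ell$ copies $\bx^{(0)},\dots,\bx^{(\ell-1)}$, and each such moment is controlled: the $\boldsymbol{\lambda}$-factors by \eqref{e4.4}, the $\boldsymbol{\varphi}$-factors by Hölder together with (II) and Proposition~\ref{p2.3}(a) (the defining property of $\square_{\ell,p}$), and the mixed ones by Hölder splitting $\boldsymbol{\varphi}$ off into $L_{\ell'}$-type factors exactly as in the choice of $\ell$ in \eqref{e4.3}. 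The bookkeeping of exponents is why \eqref{e4.3} takes that particular form.

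For (C2)(a), the essential point: we are told nothing directly about $\|\lambda_e\|_{\cals_{\partial e}}$, only the joint moment bound \eqref{e4.4}. The route is to show that \eqref{e4.4} forces $\boldsymbol{\lambda}_e$ to be close to $1$ in the $\square_\ell$-sense on $\bbx_e$, and then use that the $\square_\ell$-norm (indeed already the $\square_2$-box norm) dominates the cut norm for such an object. Concretely, from \eqref{e4.4} with all $n_{e',\omega}$ for $e'\neq e$ set to $0$ and $n_{e,\omega}\in\{0,1\}$ arbitrary, one obtains $|\ave[\prod_{\omega\in S}\boldsymbol{\lambda}_e(\bx_e^{(\omega)})]-1|\mik\eta$ for every $S\subseteq\{0,\dots,\ell-1\}^e$; applying this with $S$ the full cube and with $S$ ranging over the relevant sub-cubes, and using the recursive/Gowers--Cauchy--Schwarz machinery of Proposition~\ref{p2.1}, one deduces $\|\boldsymbol{\lambda}_e-1\|_{\square_\ell(\bbx_e)}\mik (\text{const})\cdot\eta^{1/\ell^{n-1}}$ (the exponent $1/\ell^{n-1}$ is exactly where the $|e|=n-1$ enters, via the $2^{|e|}$-th or $\ell^{|e|}$-th roots in the norm definition). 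Since $\|\cdot\|_{\cals_{\partial e}}\mik\|\cdot\|_{\square_\ell(\bbx_e)}$ (the box norm is stronger than the cut norm, as recalled after \eqref{e4.2}, and by Proposition~\ref{p2.1}(b) $\square_2\mik\square_\ell$), we get $\|\lambda_e-1\|_{\cals_{\partial e}}$ small; but (C2)(a) wants $\|\lambda_e\|_{\cals_{\partial e}}$ — so actually the correct reading is that $\psi_e$ should absorb the ``constant $1$ part''. I would therefore set $\psi_e\coloneqq \varphi_e+1$ (still in $L_p$ with $\|\psi_e\|_{L_p}\mik C+1\mik C'$) so that $\nu_e-\psi_e=\lambda_e-1$, and then (C2)(a) reads $\|\lambda_e-1\|_{\cals_{\partial e}}\mik\eta'$, which is exactly what the previous computation delivers with room to spare given the size of $\eta'$. \textbf{The main obstacle} is precisely this step: converting the joint multiplicative moment bound \eqref{e4.4} into a genuine cut-norm (or box-norm) estimate on $\boldsymbol{\lambda}_e-1$, keeping careful track of how the many copies $\bx^{(0)},\dots,\bx^{(\ell-1)}$ and the other edges $e'\neq e$ are handled (they must be integrated out or bounded using the $n_{e',\omega}=0$ choices), and getting the quantitative loss $\eta^{1/\ell^{n-1}}$ right. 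Everything else is Hölder-and-bookkeeping, controlled by the deliberately generous constants $C'=(4C)^{n\ell}$, $\eta'=(4C)^{n\ell}\eta^{1/\ell^{n-1}}$, which is why the theorem imposes $\eta\mik(4C)^{-n\ell^n}$.
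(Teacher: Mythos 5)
Your overall architecture matches the paper's: you set $\psi_e=\varphi_e+1$ so that $\nu_e-\psi_e=\lambda_e-1$, and you verify (C1)--(C3). Your (C1) argument (drop the $\varphi$-terms by nonnegativity and apply \eqref{e4.4}) is exactly what the paper does. Your route to (C2)(a) is valid and in fact more direct than the paper's: expanding $\|\boldsymbol{\lambda}_e-1\|_{\square_\ell(\bbx_e)}^{\ell^{n-1}}$ over subsets $S\subseteq\{0,\dots,\ell-1\}^e$, using \eqref{e4.4} (with $n_{e',\omega}=0$ for $e'\ne e$) to put each mixed moment within $\eta$ of $1$, and noting that the signed sum of the constants cancels gives $\|\boldsymbol{\lambda}_e-1\|_{\square_\ell(\bbx_e)}\mik 2\eta^{1/\ell^{n-1}}$; combined with the fact that the cut norm is dominated by the box norm, this yields (C2)(a). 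The paper instead derives (C2)(a) as a special case of its Lemma 5.1.

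The genuine gap is in (C2)(b) and (C3), which you identify as "Hölder and bookkeeping" but which are actually the hard core of the proof. For (C2)(b), your assertion that the left-hand side is "$\le\|\lambda_e\|_{\cals_{\partial e}}$ up to a bounded multiplicative constant" is false: the conditional expectations $G_\omega(\bx_e)=\ave[\prod_{e'\ne e}g_{e'}^{(\omega)}\mid\bx_e]$ are genuine functions of the full vector $\bx_e$, not a product of $\partial e$-measurable factors, so the cut norm over $\cals_{\partial e}$ does not see them; and they are unbounded, since the $g_{e'}^{(\omega)}$ may be bounded only by $\nu_{e'}\notin L_\infty$, so no box-norm-controls-cut-norm fact for bounded weights applies. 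The paper's Lemma 5.1 is exactly the missing ingredient: it first splits each $g_{e'}^{(\omega)}$ into a piece $\mik\lambda_{e'}$ and a piece $\mik\varphi_{e'}$ (this is \eqref{e5.5}), then runs an inductive Cauchy--Schwarz over subsets $d\subseteq e$, at each step doubling up in one coordinate $j\in e\setminus d$, bounding the off-diagonal factor $\ave[\overline{\mathbf G}_{j,d,\lambda}\,\overline{\mathbf G}_{j,d,\varphi}^{\ell'}]$ by the linear forms condition for the $\lambda$-part and by $\|\boldsymbol\varphi\|_{\square_{\ell,p}}$ (via Proposition 2.3(a) and (c.ii), using $\ell'<p$) for the $\varphi$-part, and terminating at $Q_e$, which \eqref{e4.4} controls directly; the $\eta^{1/\ell^{n-1}}$ is the accumulated root. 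The same induction (the $R_d$ branch) is what makes (C3) work: since $\lambda_{e'}$ is not assumed to be in $L_p$, the "expand and apply Hölder" plan cannot close the mixed moments $\ave[\prod_\omega B_{\calg_\omega}]$; one again needs the alternating Cauchy--Schwarz. In short, the proposal correctly arranges the pieces and handles (C1) and (C2)(a), but is missing the inductive lemma that actually powers (C2)(b) and (C3).
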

We remark that condition (I) in Theorem \ref{t4.2} is a modification of the \textit{linear forms condition} introduced in \cite[Definition 2.8]{Tao2};
it expresses the fact that the weighted hypergraph $\langle\lambda_e:e\in\calh\rangle$ contains roughly the expected number of copies of the
$\ell$-blow-up of $\calh$ and its sub-hypergraphs. On the other hand, note that condition (II) is an integrability condition; in particular,
using H\"{o}lder's inequality, it is easy to see that $\|\boldsymbol{\varphi}_e\|_{\square_{\ell,p}(\bbx_e)}\mik C$ provided that
$\|\boldsymbol{\varphi}_e\|_{L_q}\mik C$ for some $q$ sufficiently large. Thus we see that the family $\langle \nu_e+\varphi_e:e\in\calh\rangle$
is a perturbation of a system of measures which appears in \cite{Tao2}, the main point being that only integrability conditions are imposed
on each ``noise" $\varphi_e$.

We proceed to give some concrete examples of weighted graphs and hypergraphs which are obtained using Theorem \ref{t4.2}.
They are the simplest type of examples for which the results obtained in \cite{DKK} can be applied, yet they are out of the scope
of the counting lemmas developed by Tao \cite{Tao2}, and by Conlon, Fox and Zhao \cite{CFZ1}.

\subsubsection{Example: weighted graphs}

Let $V_1,V_2,V_3$ be three pairwise disjoint nonempty sets; we view $V_1,V_2$ and $V_3$ as discrete probability spaces equipped with their
uniform probability measures. Also let $\calh$ denote the graph $K_3=\big\{\{1,2\},\{2,3\},\{1,3\}\big\}$ (that is, $\calh$ is the complete
graph on three vertices). For every $e=\{i<j\}\in\calh$ let $\boldsymbol{\varphi}_e\colon V_i\times V_j\to \rr^+$ be \textit{any} function
satisfying\footnote{We do not know whether the estimate \eqref{e4.5} is optimal; in fact, it is likely that the exponent $64$
can be improved. We point out, however, that an integrability condition like \eqref{e4.5} is necessary in order to have a sparse
version of the counting lemma.}
\begin{equation} \label{e4.5}
\|\boldsymbol{\varphi}_e\|_{L_{64}} = \ave \big[\boldsymbol{\varphi}_e(x,y)^{64} \, \big|\, (x,y)\in V_i \times V_j\big]^{1/64}\mik 1
\end{equation}
and define
\begin{equation} \label{e4.6}
\boldsymbol{\lambda}_e=1 \ \text{ and } \ \boldsymbol{\nu}_e=1+\boldsymbol{\varphi}_e.
\end{equation}

\begin{figure}[htb] \label{figure1}
\centering \includegraphics[width=.55\textwidth]{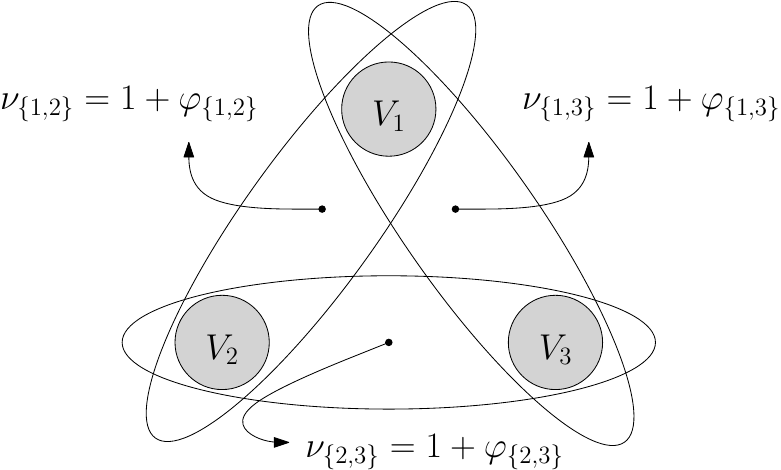}
\end{figure}

We will apply Theorem \ref{t4.2} for $n=3$, $C=1$ and $p=4$ in order to show that the weighted graph
$\langle \nu_e:e\in\calh\rangle$ is a $(4^{12},\eta,4)$-pseudorandom family for every $\eta>0$.
(Here, $\nu_e$ is as in \eqref{e1.5} for $\boldsymbol{\nu}_e$.) To this end notice first that,
by \eqref{e4.3}, we have $\ell=4$. On the other hand, it is clear that \eqref{e4.4}---that is,
condition (I) in Theorem \ref{t4.2}---is satisfied for every $\eta>0$. Finally, for condition (II)
observe that, by H\"{o}lder's inequality\footnote{Note that here, as in the proof of part (a) of Proposition \ref{p2.1},
we use the generalized form of H\"{o}lder's inequality.}, for every $e=\{i<j\}\in\calh$ we have
\[ \|\boldsymbol{\varphi}_e\|_{\square_{4,4}(V_i\times V_j)} \mik
\|\boldsymbol{\varphi}_e^4\|_{L_{16}} = \|\boldsymbol{\varphi}_e\|_{L_{64}}^4 \stackrel{\eqref{e4.5}}{\mik} 1. \]
Thus, condition (II) is also satisfied, which yields that $\langle \nu_e:e\in\calh\rangle$ is indeed
a $(4^{12},\eta,4)$-pseudorandom family for every $\eta>0$.

Our next goal is to show that the weighted graphs defined above cannot be realized as dense subgraphs of weighted graphs
which satisfy the aforementioned ``linear forms condition"\!, a pseudorandomness condition which forms the basis of the sparse
counting lemmas developed in \cite{CFZ1, Tao2}. The framework in \cite{CFZ1,Tao2} is asymptotic; consequently,
for every integer $N\meg 1$ we select, recursively, a positive integer $m_N$, three pairwise disjoint nonempty sets
$V_1^N,V_2^N,V_3^N$ and families $\langle A^N_{1,l}: l\in [N]\rangle$, $\langle A^N_{2,l}: l\in [N]\rangle$,
$\langle A^N_{3,l}: l\in [N]\rangle$ of nonempty sets such that the following hold.
\begin{enumerate}
\item[($\mathcal{P}$1)] We have $m_{N+1}^{64} \meg 2^{N+2}\, m_N$; moreover, $m_1=2$.
\item[($\mathcal{P}$2)] For every $i\in\{1,2,3\}$ and every $l\in [N]$ we have $A^N_{i,l}\subseteq V^N_i$.
\item[($\mathcal{P}$3)] For every $e=\{i<j\}\in \calh$ and every $l\in [N]$ we have
\[ |A^N_{i,l} \times A^N_{j,l}| = \Big( \frac{1}{m^{64^2}_l} \Big)\, |V^N_i \times V^N_j|. \]
In particular, we have $\|\mathbf{1}_{A^N_{i,l}\times A^N_{j,l}}\|_{L_{64}}=1/m_l^{64}$.
\end{enumerate}
Given the above data, for every integer $N\meg 1$ and every $e=\{i<j\}\in \calh$ we define
$\boldsymbol{\varphi}_e^N,\boldsymbol{\lambda}^N_e,\boldsymbol{\nu}_e^N\colon V^N_i\times V^N_j\to\rr^+$ by setting
\begin{eqnarray}
\label{e4.7}
& &  \boldsymbol{\varphi}_e^N=\sum_{l=1}^{N} \frac{1}{2^l}\,
\frac{ \mathbf{1}_{A^N_{i,l}\times A^N_{j,l}} }{ \|\mathbf{1}_{A^N_{i,l}\times A^N_{j,l}}\|_{L_{64}}} =
\sum_{l=1}^{N} \frac{m_l^{64}}{2^l}\, \mathbf{1}_{A^N_{i,l}\times A^N_{j,l}},  \\
& & \ \ \ \ \ \ \ \ \ \ \ \ \ \ \boldsymbol{\lambda}^N_e=1 \ \text{ and } \ \boldsymbol{\nu}^N_e=1+\boldsymbol{\varphi}^N_e.  \nonumber
\end{eqnarray}
Since $\|\boldsymbol{\varphi}_e^N\|_{L_{64}}\mik 1$, by the previous discussion, we see that the weighted graph
 $\langle \nu^N_e:e\in\calh\rangle$ is a $(4^{12},\eta,4)$-pseudorandom family for every $\eta>0$ and every $N\meg 1$.

Now assume, towards a contradiction, that there exist a constant $M>0$ and a sequence $\langle \kappa_e^N:e\in\calh\rangle$
of weighted graphs\footnote{Specifically, for every $e=\{i<j\}\in \calh$ and every $N\meg 1$ we have
$\boldsymbol{\kappa}_e^N\colon V^N_i\times V^N_j\to\rr^+$, and $\kappa^N_e$ is as in \eqref{e1.5} for $\boldsymbol{\kappa}_e^N$.}
which satisfy the ``linear forms condition" (see \cite[Definition 2.8]{CFZ1} or \cite[Definition 2.8]{Tao2}) such that
$\nu^N_{e_0} \mik M \cdot \kappa_{e_0}^N$ for some $e_0=\{i<j\}\in\calh$ and infinitely many $N$.
Setting $l_0\coloneqq \min\{l\meg 1: M\mik m_l\}$, we thus have\footnote{Note that here we do not use the fact that
$\boldsymbol{\lambda}^N_e=1$ for every $e\in\calh$ and every integer $N\meg 1$. Actually, the same argument can be applied
if $\langle\boldsymbol{\lambda}^N_e:e\in\calh\rangle$ is \textit{any} sequence of weighted graphs which satisfy condition (I)
in Theorem \ref{t4.2} for $\ell=4$ and $\eta>0$ sufficiently small.}
\begin{equation} \label{e4.8}
0\mik \boldsymbol{\varphi}^N_{e_0} \mik m_{l_0} \cdot \boldsymbol{\kappa}_{e_0}^N \ \text{ for infinitely many $N$.}
\end{equation}
Next, set
\[ \sigma=\frac{1}{2}\, \frac{(m^{64}_{l_0+1}/ 2^{l_0+1}) - m_{l_0}}{m^{64^2}_{l_0+1}} \]
and notice that, by ($\mathcal{P}$1) above, we have $\sigma>0$. By \eqref{e4.8} and \cite[Lemma 2.15 and Theorem 2.16]{CFZ1},
for every integer $N\meg 1$ there exists $\boldsymbol{h}_{e_0}^N\colon V_i^N\times V_j^N\to \rr$
with $0\mik \boldsymbol{h}_{e_0}^N \mik m_{l_0}$ and such that
\begin{equation} \label{e4.9}
\!\!\sup\big\{ \big|\ave\big[(\boldsymbol{\varphi}_{e_0}^N-\boldsymbol{h}^N_{e_0})\,  \boldsymbol{1}_{A\times B}\big]\big| :
A\subseteq V_i^N, B\subseteq V_j^N \big\} \mik \sigma \ \text{ for infinitely many $N$.}
\end{equation}
On the other hand, by ($\mathcal{P}2$), ($\mathcal{P}3$), \eqref{e4.7} and the fact $0\mik \boldsymbol{h}_{e_0}^N \mik m_{l_0}$,
for every $N\meg l_0+1$ we have
\[ \frac{(m^{64}_{l_0+1}/ 2^{l_0+1}) - m_{l_0}}{ m^{64^2}_{l_0+1}} \mik
\big|\ave\big[ (\boldsymbol{\varphi}_{e_0}^N-\boldsymbol{h}^N_{e_0}) \, \boldsymbol{1}_{A^N_{i,l_0+1}\times A^N_{j,l_0+1}}\big]\big|  \]
which clearly leads to a contradiction by \eqref{e4.9} and the choice of $\sigma$.

\subsubsection{Example: weighted hypergraphs}

It is the hypergraph analogue of the previous example. Specifically, let $n\meg 3$ be an integer, and let
$V_1,\dots,V_n$ be pairwise disjoint nonempty sets which we view as discrete probability spaces equipped
with their uniform probability measures. Also let $\calh$ denote the graph $K^{(n-1)}_n$ (thus, $\calh$ is the complete
$(n-1)$-uniform hypergraph on $n$ vertices). For every $e\in\calh$ set $\mathbf{V}_e=\prod_{i\in e} V_i$, let
$\boldsymbol{\varphi}_e\colon \mathbf{V}_e\to \rr^+$ be a function satisfying
\begin{equation} \label{e4.10}
\|\boldsymbol{\varphi}_e\|_{L_{4^n}} = \ave \big[\boldsymbol{\varphi}_e(\bx_e)^{4^n} \, \big|\, \bx_e\in\mathbf{V}_e \big]^{1/4^n}\mik 1
\end{equation}
and define
\begin{equation} \label{e4.11}
\boldsymbol{\lambda}_e=1 \ \text{ and } \ \boldsymbol{\nu}_e=1+\boldsymbol{\varphi}_e.
\end{equation}
By Theorem \ref{t4.2}, H\"{o}lder's inequality and arguing precisely as in the previous example, we see that the weighted
hypergraph $\langle \nu_e:e\in\calh\rangle$ is a $(4^{4n},\eta,4)$-pseudorandom family for every $\eta>0$. Moreover,
a straightforward modification of the argument in the previous example shows that there exist weighted hypergraphs
of this form which cannot be realized as dense subhypergraphs of weighted hypergraphs satisfying the ``linear forms condition".

\subsection{The second main result}

Our second main result provides a somewhat different type of examples of pseudorandom families.
\begin{thm} \label{t4.3}
Let $n\in\nn$ with $n\meg 3$, $C\meg 1$ and $1<p\mik\infty$, and let $\ell$ be as in \eqref{e4.3}. Also let $0<\eta\mik 1/(n\ell)$
and let $\mathscr{H}=(n,\langle(X_i,\Sigma_i,\mu_i):i\in [n]\rangle,\calh)$ be a hypergraph system with $\calh=K^{(n-1)}_n={[n]\choose n-1}$.
$($Again observe that $\mathscr{H}$ is $(n-1)$-uniform.$)$ For every $e\in\calh$ let $\nu_e\in L_1(\bbx,\calb_e,\bmu)$ and
$\psi_e\in L_p(\bbx,\calb_e,\bmu)$ be nonnegative random variables, and let $\boldsymbol{\nu}_e$ and $\boldsymbol{\psi}_e$
be as in \eqref{e1.5} for $\nu_e$ and $\psi_e$ respectively. Assume that the following conditions are satisfied.
\begin{enumerate}
\item[(I)] We have
\begin{equation} \label{e4.12}
1-\eta \mik \ave\Big[ \prod_{e \in \calh} \ \prod_{\omega \in \{0,\ldots,\ell -1 \}^e }\!\! \boldsymbol{\psi}_e ^{n_{e,\omega}}
(\bx_e^{(\omega)})\, \Big|\, \bx^{(0)},\dots,\bx^{(\ell-1)}\in \boldsymbol{X}\Big]\mik C + \eta
\end{equation}
for any choice of $n_{e,\omega}\in \{0,1\}$.
\item[(II)] For every $e\in\calh$ we have $1\mik\|\boldsymbol{\nu}_e\|_{\square_{\ell,p}(\bbx_e)}<\infty$,
$\|\boldsymbol{\psi}_e\|_{\square_{\ell,p}(\bbx_e)}\mik C$ and
\begin{equation} \label{e4.13}
\|\boldsymbol{\nu}_e-\boldsymbol{\psi}_e\|_{\square_\ell(\bbx_e)} \mik \eta\,(C\cdot M)^{-(n-1)\ell}
\end{equation}
where $M=\max\{\|\boldsymbol{\nu}_e\|_{\square_{\ell,p}(\bbx_e)}:e\in\calh\}$.
\end{enumerate}
Then the family $\langle \nu_e \colon e \in \calh\rangle$ is $(C,\eta',p)$-pseudorandom where $\eta'=n\ell\eta$.
\end{thm}
Notice that in Theorem \ref{t4.3} each $\nu_e$ is decomposed as $\psi_e+(\nu_e-\psi_e)$. Here, the condition on the first
components---that is, condition \eqref{e4.12}---is weaker than that in Theorem \ref{t4.2}, but this is offset by making stronger
the condition on the pseudorandom components. We also remark that Theorem \ref{t4.3} was motivated by \cite[Lemmas 5 and 6]{CFZ2}
which dealt\footnote{We notice that if $\psi_e=1$ for every $e\in\calh$, then a slight weakening of \eqref{e4.13} is only needed.
Specifically, one can assume that $\|\boldsymbol{\nu}_e-1\|_{\square_2(\bbx_e)} \mik \eta M^{-(n-1)}$ where
$M=\max\{\|\boldsymbol{\nu}_e\|_{L_\infty}:e\in\calh\}$; see \cite{CFZ2} for details.} with the case $C=1$, $p=\infty$ and $\psi_e=1$
for every~$e\in\calh$. Its proof is given in Section 6.

%-----------------Proof of Theorem \ref{t4.2}-----------------%

\section{Proof of Theorem \ref{t4.2}}

\numberwithin{equation}{section}

Let $n, C, p, \ell$ and $\eta$ be as in the statement of the theorem and set
\begin{equation} \label{e5.1}
\bar{C}=(2C)^{n\ell}, \ \ \bar{\eta}=(2C)^{n\ell}\,\eta^{1/\ell^{n-1}}, \ \ C'=(4C)^{n\ell} \ \text{ and } \ \eta' = (4C)^{n\ell}\eta^{1/\ell^{n-1}}.
\end{equation}
Also let $\mathscr{H}=(n,\langle(X_i,\Sigma_i,\mu_i):i\in [n]\rangle,\calh)$ be a hypergraph system with $\calh\!=\!{n\choose n-1}$ and
for every $e\in\calh$ let $\lambda_e\in L_1(\bbx,\calb_e,\bmu)$ and $\varphi_e\in L_p(\bbx,\calb_e,\bmu)$ be nonnegative random variables
satisfying (I) and (II).

We will need the following lemma. Its proof is given in Subsection 5.1.
\begin{lem} \label{l5.1}
Let $e\in\calh$ and let $i\in [n]$ be the unique integer such that $e=[n]\setminus\{i\}$. For every $e'\in\calh\setminus\{e\}$ and every
$\omega\in\{0,\dots,\ell-1\}$ let $g_{e'}^{(\omega)}\in L_1(\bbx,\calb_{e'},\bmu)$ such that either: \emph{(i)} $0\mik g_{e'}^{(\omega)}\mik \lambda_{e'}$,
or \emph{(ii)} $0 \mik g_{e'}^{(\omega)} \mik \varphi_{e'}$, or \emph{(iii)} $0 \mik g_{e'}^{(\omega)} \mik 1$.~Then~we~have
\begin{equation} \label{e5.2}
\Big|\ave\Big[ (\boldsymbol{\lambda}_{e}-1)(\bx_{e}) \prod_{\omega=0}^{\ell-1} \prod_{e'\in\calh\setminus\{e\}\!\!}
g_{e'}^{(\omega)}(\bx_{e}, x_{i}^{(\omega)})\, \Big|\, \bx_{e}\in \bbx_{e},x_i^{(0)}, x_i^{(1)}\in X_i\Big]\Big|\mik \bar{\eta}
\end{equation}
and
\begin{equation} \label{e5.3}
\ave\Big[ \prod_{\omega=0}^{\ell-1} \prod_{e'\in\calh\setminus\{e\}}\!\! g_{e'}^{(\omega)}(\bx_{e},x^{(\omega)}_{i})\, \Big|\,
\bx_{e}\in \bbx_{e},x_i^{(0)},\dots, x_i^{(\ell-1)}\in X_i \Big] \mik \bar{C}
\end{equation}
where $\bar{\eta}$ and $\bar{C}$ are as in \eqref{e5.1}.
\end{lem}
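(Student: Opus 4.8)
The plan is to reduce both \eqref{e5.2} and \eqref{e5.3} to the single linear forms hypothesis (I) by writing the two inner expectations as averages of the form appearing in \eqref{e4.4}. First I would observe that since $e = [n]\setminus\{i\}$ and each $e' \in \calh\setminus\{e\}$ equals $[n]\setminus\{i'\}$ for some $i' \neq i$, we have $i \in e'$ for every such $e'$; thus each $g_{e'}^{(\omega)}$ depends on the coordinate $x_i$ (through $x_i^{(\omega)}$) together with coordinates in $e'\setminus\{i\} \subseteq e$. So the product $\prod_{\omega}\prod_{e'} g_{e'}^{(\omega)}(\bx_e, x_i^{(\omega)})$ is genuinely a function of $\bx_e$ and of $x_i^{(0)},\dots,x_i^{(\ell-1)}$, i.e.\ of an $\ell$-tuple of points of $\bbx$ sharing all coordinates except the $i$-th — which is exactly the shape of the tuple $\bx^{(0)},\dots,\bx^{(\ell-1)}$ in \eqref{e4.4} when we only vary coordinate $i$. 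The key domination step: for each $e'$ and each $\omega$, the hypothesis gives $0 \mik g_{e'}^{(\omega)} \mik \lambda_{e'}$, or $\mik \varphi_{e'}$, or $\mik 1$. I would bound $\varphi_{e'}$-type and $1$-type factors crudely and handle $\lambda_{e'}$-type factors via (I); more precisely, one wants to pass to an upper bound in which every factor is either $\boldsymbol{\lambda}_{e'}$ or $1$ so that \eqref{e4.4} applies directly.

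For \eqref{e5.3}: after the domination, the factors $g_{e'}^{(\omega)} \mik 1$ are replaced by $1$, the factors $g_{e'}^{(\omega)} \mik \lambda_{e'}$ by $\boldsymbol{\lambda}_{e'}$, and the remaining factors $g_{e'}^{(\omega)} \mik \varphi_{e'}$ must be controlled. Here I would integrate out the $\varphi_{e'}$ factors one coordinate direction at a time using condition (II), namely $\|\boldsymbol{\varphi}_{e'}\|_{\square_{\ell,p}(\bbx_{e'})} \mik C$, together with part (a) of Proposition~\ref{p2.3} and repeated applications of Hölder's inequality, exactly in the style of the proof of Theorem~\ref{t3.1}; each such step costs a factor bounded by a power of $C$, and the number of edges and the exponent $\ell$ are both controlled by $n$, so the accumulated constant is at most $(2C)^{n\ell} = \bar C$, with the $\boldsymbol{\lambda}$-factors that survive bounded above by $1+\eta \mik 2$ via \eqref{e4.4}. (The term $\eta^{1/\ell^{n-1}}$ does not enter \eqref{e5.3}; only the clean constant $\bar C$ appears.)

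For \eqref{e5.2}: this is the main obstacle, because one has the signed factor $\boldsymbol{\lambda}_e - 1$, which is not nonnegative, so a crude domination is not available — the whole point is cancellation. The strategy is the usual Gowers-type Cauchy--Schwarz / Hölder unfolding: bound the $\varphi_{e'}$- and $1$-type factors away as before (using (II) and powers of $C$) so as to be left with $\ave[(\boldsymbol{\lambda}_e - 1)\prod_\omega \prod_{e'} \boldsymbol{\lambda}_{e'}^{n_{e,\omega}}]$ up to a constant $\bar C$-ish factor and a small-error term; then expand $(\boldsymbol\lambda_e - 1) = \sum$ over the $\{0,1\}$-choice of whether the $e$-factor is present, and note that each resulting expectation is of the form in \eqref{e4.4}, hence lies in $[1-\eta, 1+\eta]$, so the difference is $O(\eta)$. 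One subtlety is that \eqref{e5.2} involves only $x_i^{(0)}, x_i^{(1)}$ (two points, as in the box norm $\square_2$), whereas \eqref{e4.4} ranges over $\ell$ points $\bx^{(0)},\dots,\bx^{(\ell-1)}$; I would reconcile this by viewing the $2$-point average as a specialization/marginal of the $\ell$-point one (setting the extra copies equal, or integrating out), and by a Hölder step that turns the $L_p$-control of the $\varphi$-factors into the $\ell$-th-power form needed to invoke (I) — this is where the exponent $1/\ell^{n-1}$ in $\bar\eta$ is produced, since we take an $\ell^{n-1}$-th root after iterating the Cauchy--Schwarz/Hölder unfolding across the $n-1$ edges of $\calh\setminus\{e\}$. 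Keeping careful track of the constants through these iterations, so that everything is absorbed into $\bar C = (2C)^{n\ell}$ and $\bar\eta = (2C)^{n\ell}\eta^{1/\ell^{n-1}}$, is the bookkeeping core of the argument.
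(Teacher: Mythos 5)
Your plan follows the same route as the paper: the paper's proof of Lemma \ref{l5.1} is indeed an iterated Cauchy--Schwarz/H\"older argument that peels off one coordinate $j\in e$ (equivalently one edge $e_j=[n]\setminus\{j\}$) at a time, bounds the $\varphi$-dominated factors for $e_j$ by the $(\ell,p)$-box-norm hypothesis (II), dominates the $\lambda$-dominated factors by $\boldsymbol{\lambda}_{e_j}$, and terminates at an expression covered directly by the linear forms hypothesis \eqref{e4.4}. So the strategy is correct.

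Two points where your write-up is imprecise in a way that would matter if you carried it out. First, the intermediate expression you describe for \eqref{e5.2}, namely $\ave\big[(\boldsymbol\lambda_e-1)\prod_{\omega}\prod_{e'}\boldsymbol\lambda_{e'}^{n_{e,\omega}}\big]$, is not what the iteration produces: each H\"older step with respect to $x_j$ simultaneously removes the $e_j$-factors \emph{and} raises the signed factor $(\boldsymbol\lambda_e-1)$ to the $\ell$-th power in a new $\omega$-direction, so after $n-1$ steps one arrives at the quantity $Q_e=\ave\big[\prod_{\omega\in\{0,\dots,\ell-1\}^e}(\boldsymbol\lambda_e-1)(\bx_e^{(\omega)})\prod_{e',\omega}\boldsymbol\lambda_{e'}^{n_{e',\omega}}(\bx_e^{(\omega)})\big]$ (which expands into $2^{\ell^{n-1}}$ terms of the form \eqref{e4.4}, giving $|Q_e|\mik 2\eta$ after cancellation), and the $\ell^{n-1}$-th root of this, not of a single-copy expression, is what produces $\bar\eta$. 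You gesture at this with the ``$\ell^{n-1}$-th root'' remark, but the displayed intermediate formula is not the one the argument delivers. Relatedly, the bounding-away of $\varphi$- and $1$-type factors cannot be done ``as before'' as a preliminary step; it is interleaved with the squaring of $(\boldsymbol\lambda_e-1)$ inside each H\"older application, and the paper packages this via the auxiliary quantities $Q_d,R_d$ ($d\subseteq e$) and the case-split definition of $g_{e',d,\omega}$: keep the genuine $\mathbf{g}_{e'}^{(\omega_i)}$ while $d\subseteq e'$, replace by $\boldsymbol\lambda_{e'}$ or $1$ once $d\nsubseteq e'$. Second, the discrepancy you noticed between the product over $\omega\in\{0,\dots,\ell-1\}$ and the expectation over only $x_i^{(0)},x_i^{(1)}$ in \eqref{e5.2} is a typo in the statement (the proof computes $Q_\emptyset$ as an average over $\bbx_e\times X_i^\ell$), so no ``$2$-point versus $\ell$-point'' reconciliation is needed.
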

After this preliminary discussion we are ready to enter into the main part of the proof. For every $e\in\calh$ set $\nu_e=\lambda_e+\varphi_e$
and let $\boldsymbol{\nu}_e$ be as in \eqref{e1.5} for $\nu_e$. Recall that we need to verify conditions (C1)--(C3)
in Definition \ref{d4.1} for the family $\langle \nu_e:e\in\calh\rangle$.

First, let $\calg\subseteq\calh$ be nonempty. Since $0\mik \lambda_e\mik \nu_e$, by \eqref{e4.4}, we have
\[ 1-\eta \mik \ave\Big[ \prod_{e \in \calg} \lambda_e\Big] \mik \ave\Big[\prod_{e \in \calg} \nu_e\Big] \]
and so, condition (C1) is satisfied.

Next, for every $e\in\calh$ let $\psi_e= \varphi_e+1$. By part (c.i) of Proposition \ref{p2.3},
\[ \|\psi_e\|_{L_p}\mik \|\varphi_e\|_{L_p}+1\mik \|\boldsymbol{\varphi}_e\|_{\square_{\ell, p}(\bbx_e)}+1\mik  C+1 \stackrel{\eqref{e5.1}}{\mik} C'.\]
Fix $e\in\calh$ and for every $f\in\partial e$ let $A_f\in\calb_f$. For every $e'\in\calh\setminus\{e\}$ and every $\omega\in\{0,\dots,\ell-1\}$
we define $g_{e'}^{(\omega)}\in L_1(\bbx,\calb_{e'},\bmu)$ by setting $g_{e'}^{(\omega)}=1$ if $\omega\in \{1,\dots,\ell-1\}$
and $g_{e'}^{(0)}=\mathbf{1}_{A_{f}}$ where $f=e'\cap e$. By \eqref{e5.2}, we have
\begin{equation} \label{e5.4}
\Big|\ave\Big[(\boldsymbol{\lambda}_{e}-1)(\bx_{e}) \prod_{f\in\partial e}\mathbf{1}_{A_{f}}(\bx_{e}, x_{i})\, \Big|\,
\bx_{e}\in \bbx_{e},x_i^{(0)}, x_i^{(1)}\in X_i \Big]\Big|\mik \bar{\eta}.
\end{equation}
Hence, by \eqref{e5.4}, the definition of the cut norm and the fact that $\nu_e-\psi_e=\lambda_e-1$, we conclude that
$\|\nu_e-\psi_e\|_{\cals_{\partial e}}\mik \eta'$. That is, condition (C2.a) is satisfied.

We proceed to verify condition (C2.b). Let $e\in\calh$ be arbitrary and let $i\in [n]$ be the unique integer such that $e=[n]\setminus\{i\}$.
Also let $\omega \in \{0,1\}$. For every $e'\in\calh\setminus\{e\}$ let $g_{e'}^{(\omega)}\in L_1(\bbx,\calb_{e'},\bmu)$ such that either
$0 \mik g_{e'}^{(\omega)}\mik\nu_{e'}$ or $0 \mik g_{e'}^{(\omega)} \mik 1$. We set
\[ \calg_{e,\nu}^{(\omega)}=\big\{e'\in \calh\setminus\{e\}: 0\mik g^{(\omega)}_{e'}\mik \nu_{e'}\big\} \ \text{ and } \
\calg_{e,1}^{(\omega)}=\big\{e'\in \calh\setminus\{e\}: 0\mik g^{(\omega)}_{e'}\mik 1\big\} \]
and for every $e'\in \calg_{e,\nu}^{(\omega)}$ let
\begin{equation} \label{e5.5}
g_{e',\lambda}^{(\omega)}=g_{e'}^{(\omega)}\,\mathbf{1}_{[g_{e'}^{(\omega)}\mik \lambda_{e'}]} \ \text{ and } \
g_{e',\varphi}^{(\omega)}=(g_{e'}^{(\omega)}-\lambda_{e'})\,\mathbf{1}_{[g_{e'}^{(\omega)}>\lambda_{e'}]}.
\end{equation}
Finally, for every  $\calg\subseteq \calg_{e,\nu}^{(\omega)}$ set
\begin{equation} \label{e5.6}
A_{\calg}^{(\omega)}=\prod_{e'\in\calg}g_{e', \lambda}^{(\omega)} \prod_{e'\in\calg_{e,\nu}^{(\omega)}\setminus\calg}g_{e',\varphi}^{(\omega)}
\prod_{e'\in \calg_{e,1}^{(\omega)}}g_{e'}^{(\omega)}.
\end{equation}
(Recall that, by convention, the product of an empty family of functions is equal to the constant function 1.)
The following properties are straightforward consequences of the relevant definitions.
\begin{enumerate}
\item[(a)] For every $\omega\in\{0,1\}$ and every $e'\in\calg_{e,\nu}^{(\omega)}$ we have
$g_{e',\lambda}^{(\omega)},\, g_{e',\varphi}^{(\omega)}\in L_1(\bbx,\calb_{e'},\bmu)$, $0\mik g_{e',\lambda}^{(\omega)} \mik \lambda_{e'}$,
$0\mik g_{e',\varphi}^{(\omega)} \mik \varphi_{e'}$ and $g_{e'}^{(\omega)}=g_{e',\lambda}^{(\omega)}+g_{e',\varphi}^{(\omega)}$.
\item[(b)] For every $\bx_{e}\in\bbx_{e}$ and every $x_i^{(0)}, x_i^{(1)}\in X_i$ we have
\begin{equation} \label{e5.7}
\prod_{\omega\in\{0,1\}}\prod_{e'\in\calh\setminus\{e\}} g_{e'}^{(\omega)}(\bx_{e}, x_i^{(\omega)}) = \sum_{\calg_0\subseteq \calg_{e,\nu}^{(0)}}\,
\sum_{\calg_1\subseteq \calg_{e,\nu}^{(1)}}\, \prod_{\omega\in\{0,1\}} A^{(\omega)}_{\calg_{\omega}}(\bx_{e},x_i^{(\omega)}).
\end{equation}
\end{enumerate}
By (a), we see that every factor of $A_{\calg}^{(\omega)}$ satisfies the assumptions of Lemma \ref{l5.1}. Therefore,
\[ \begin{split}
& \ave \Big[ (\boldsymbol{\nu}_{e}-\boldsymbol{\psi}_{e})(\bx_e) \prod_{\omega\in\{0,1\}} \ave\big[ \prod_{e'\in\calh\setminus\{e\}}
g_{e'}^{(\omega)}(\bx_{e},x_i) \, \big| \, x_i\in X_i\big]\,\Big|\,\bx_{e}\in \bbx_{e}\Big] \\
& \stackrel{ \ \ \ \ \,\,}{=} \,\ave\Big[ (\boldsymbol{\lambda}_{e}-1)(\bx_{e}) \prod_{\omega\in\{0,1\}} \prod_{e'\in\calh\setminus\{e\}}\!
g_{e'}^{(\omega)}(\bx_{e},x_i^{(\omega)})\, \Big|\, \bx_{e}\in\bbx_{e},x_i^{(0)},x_i^{(1)}\in X_i \Big]\\
& \stackrel{\eqref{e5.7}}{=} \sum_{\calg_0\subseteq\calg_{e,\nu}^{(0)}}\, \sum_{\calg_1\subseteq \calg_{e,\nu}^{(1)}}
\ave\Big[ (\boldsymbol{\lambda}_{e}-1)(\bx_{e}) \prod_{\omega\in\{0,1\}}\! A_{\calg_\omega}^{(\omega)}(\bx_{e}, x_i^{(\omega)})
\,\Big|\, \bx_{e}\in\bbx_{e}, x_i^{(0)}, x_i^{(1)}\in X_i \Big] \\
& \stackrel{\eqref{e5.2}}{\mik} \, 2^{|\calg_{e,\nu}^{(0)}|}\, 2^{|\calg_{e,\nu}^{(1)}|}\,\bar{\eta}\, \mik\, 4^{n-1}\bar{\eta}
\stackrel{\eqref{e5.1}}{\mik} \eta'
\end{split} \]
which implies, of course, that condition (C2.b) is satisfied.

It remains to verify condition (C3). Fix $e\in\calh$ and, as above, let $i\in [n]$ be the unique integer such that $e=[n]\setminus\{i\}$.
Also let $\calg\subseteq\calh\setminus \{e\}$ be nonempty and let $\boldsymbol{\nu}_{e,\calg}\colon \bbx_e\to\rr$ be as in Definition \ref{d4.1}.
Then notice that
\begin{eqnarray*}
\ave[\boldsymbol{\nu}_{e,\calg}^{\ell}] & = &
\ave\Big[ \ave\big[ \prod_{e'\in\calg}\nu_{e'}(\bx_e, x_i) \,\big|\, x_i\in X_i\big]^\ell\,\Big|\,\bx_e\in\bbx_e\Big] \\
& = & \ave\Big[ \prod_{\omega=0}^{\ell-1}\prod_{e'\in\calg}\nu_{e'}(\bx_e, x_i^{(\omega)}) \, \Big| \,
\bx_e\in\bbx_e,x_i^{(0)},\dots, x_i^{(\ell-1)}\in X_i \Big].
\end{eqnarray*}
Next, observe that for every $\bx_{e}\in\bbx_{e}$ and every $x_i^{(0)}, \dots, x_i^{(\ell-1)}\in X_i$ we have
\begin{multline*}
\ \ \ \ \ \prod_{\omega=0}^{\ell-1}\prod_{e'\in\calg}\nu_{e'}(\bx_e, x_i^{(\omega)}) =
\prod_{\omega=0}^{\ell-1} \prod_{e'\in\calg} (\lambda_{e'}+\varphi_{e'})(\bx_e, x_i^{(\omega)}) = \\
= \sum_{\calg_0\subseteq \calg} \cdots \sum_{\calg_{\ell-1}\subseteq \calg}\, \prod_{\omega=0}^{\ell-1} \prod_{e'\in\calg_\omega}\lambda_{e'}
(\bx_e, x_i^{(\omega)})\!\prod_{e'\in\calg\setminus \calg_\omega}\varphi_{e'}(\bx_e, x_i^{(\omega)}). \ \ \ \ \ \ \
\end{multline*}
Therefore, setting
\[ B_{\calg'}=\prod_{e'\in\calg'}\lambda_{e'} \prod_{e'\in\calg\setminus \calg'}\varphi_{e'} \]
for every $\calg'\subseteq \calg$, we obtain that
\begin{eqnarray*}
\ave[\boldsymbol{\nu}_{e,\calg}^{\ell}]\!\! & = & \!\!
\sum_{\calg_0\subseteq \calg} \cdots \sum_{\calg_{\ell-1}\subseteq \calg} \ave\Big[\prod_{\omega=0}^{\ell-1}
B_{\calg_{\omega}}(\bx_e, x_i^{(\omega)}) \, \Big| \, \bx_e\in\bbx_e, x_i^{(0)},\dots, x_i^{(\ell-1)}\in X_i \Big] \\
& \stackrel{\eqref{e5.3}}{\mik} & 2^{|\calg|\ell} \bar{C} \, \mik \, 2^{(n-1)\ell} \bar{C} \stackrel{\eqref{e5.1}}{\mik} C'.
\end{eqnarray*}
This shows that condition (C3) is satisfied, and so the entire proof of Theorem \ref{t4.2} is completed.

\subsection{Proof of Lemma \ref{l5.1}}

The argument is similar to that in the proofs of \cite[~Lemma 6.3]{CFZ1} and \cite[Proposition 5.1]{Tao2}.
Proofs of this sort originate from the work of Green and Tao \cite{GT1,GT2}.

We proceed to the details. First we need to introduce some pieces of notation. Let $d$ be a (possibly empty) subset of $e$ and write
$\bbx~=~\bbx_{e\setminus d}~\times~\bbx_{d\cup\{i\}}$. (Recall that $i\in [n]$ is the unique integer such that $e=[n]\setminus\{i\}$.)
Notice that every element of the space $\bbx_{e\setminus d}\times \bbx_{d\cup\{i\}}^\ell$ is written as
$(\bx_{e\setminus d},\bx_{d\cup \{i\}}^{(0)},\dots, \bx_{d\cup\{i\}}^{(\ell-1)})$ where $\bx_{e\setminus d}~\in~\bbx_{e\setminus d}$ and
$\bx_{d\cup\{i\}}^{(0)},\dots,\bx_{d\cup\{i\}}^{(\ell-1)}\in \bbx_{d\cup\{i\}}$. On the other hand, for every
$\bx_{d\cup\{i\}}^{(0)}=(x_j^{(0)})_{j\in d\cup\{i\}},\dots, \bx_{d\cup\{i\}}^{(\ell-1)}=(x_j^{(\ell-1)})_{j\in d\cup\{i\}}\in \bbx_{d\cup\{i\}}$,
every $d'~\subseteq~d~\cup~\{i\}$ and every $\omega=(\omega_j)_{j\in d'}\in\{0,\dots, \ell-1\}^{d'}$ by $\bx_{d'}^{(\omega)}$ we shall denote
the unique element of $\bbx_{d'}$ defined by the rule
\begin{equation} \label{e5.8}
\bx_{d'}^{(\omega)}=(x_j^{(\omega_j)})_{j\in d'}.
\end{equation}

Next, for every $d\subseteq e$ we define $F_d, G_d\colon \bbx_{e\setminus d}\times \bbx_{d\cup\{i\}}^\ell\to \rr$
as follows. First, set
\begin{equation} \label{e5.9}
F_d(\bx_{e\setminus d},\bx_{d\cup\{i\}}^{(0)},\dots,\bx_{d\cup\{i\}}^{(\ell-1)})=\prod_{\omega\in\{0,\dots,\ell-1\}^d}\!\!
\big(\boldsymbol{\lambda}_{e}(\bx_{e\setminus d},\bx_d^{(\omega)})-1\big)
\end{equation}
and notice that $F_d$ does not depend on the value of $x_i^{(0)},\dots,x_i^{(\ell-1)}\in X_i$. The definition of the function $G_d$
is somewhat more involved. For every $e'~\in~\calh~\setminus~\{e\}$ and every $\omega~\in~\{0,\dots,\ell-1\}$ let $g_{e'}^{(\omega)}$
be as in the statement of the~lemma~and~let~$\mathbf{g}_{e'}^{(\omega)}$ be as in \eqref{e5.1} for $g_{e'}^{(\omega)}$. Given
$e'\in\calh\setminus\{e\}$ and $\omega\in\{0,\dots,\ell-1\}^{e'\cap(d\cup\{i\})}$ it is convenient to introduce an auxiliary
function $g_{e',d,\omega}\colon\bbx_{e\setminus d}\times \bbx_{d\cup\{i\}}^\ell\to \rr$ by defining
$g_{e',d,\omega}(\bx_{e\setminus d},\bx_{d\cup \{i\}}^{(0)},\dots,\bx_{d\cup \{i\}}^{(\ell-1)})$
according to the following cases
\begin{equation} \label{e5.10}
\begin{cases}
\mathbf{g}_{e'}^{(\omega_i)}(\bx_{e'\setminus (d\cup\{i\})},\bx_{d\cup\{i\}}^{(\omega)})
& \!\!\text{if } d\subseteq e',\\
\boldsymbol{\lambda}_{e'}(\bx_{e'\setminus (d\cup\{i\})},\bx_{e'\cap (d\cup\{i\})}^{(\omega)})
& \!\!\text{if } d\nsubseteq e' \text{ and } g_{e'}^{(\omega_i)}\mik\lambda_{e'}, \\
1 & \!\!\text{if } d\nsubseteq e' \text{ and either } g_{e'}^{(\omega_i)}\mik\varphi_{e'} \text{ or } g_{e'}^{(\omega_i)}\mik 1.
\end{cases}
\end{equation}
(Here, $\omega_i$ is the $i$-th coordinate of $\omega$. Moreover, $\bx_{e'\setminus (d\cup\{i\})}$ stands for the natural
projection of $\bx_{e\setminus d}$ into $\bbx_{e'\setminus (d\cup\{i\})}$; note that this projection is well-defined since
$e'~\setminus~(d\cup\{i\})\subseteq e\setminus d$.) We now define
\begin{equation} \label{e5.11}
G_d=\prod_{e'\in\calh\setminus\{e\}}\, \prod_{\omega\in \{0,\dots,\ell-1\}^{e'\cap(d\cup\{i\})}} g_{e',d,\omega}.
\end{equation}
Finally, we set
\begin{equation} \label{e5.12}
Q_d=\ave[F_d \, G_d] \ \text{ and } \ R_d=\ave[G_d].
\end{equation}
\begin{claim} \label{c5.2}
The following hold.
\begin{enumerate}
\item[(a)] We have that $Q_\emptyset$ and $R_\emptyset$ coincide with the quantities appearing in the left-hand side
of \eqref{e5.2} and \eqref{e5.3} respectively.
\item[(b)] We have $|Q_e|\mik 2\eta$ and $0\mik R_e\mik 1+\eta$.
\end{enumerate}
\end{claim}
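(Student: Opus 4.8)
\textbf{Proof plan for Claim \ref{c5.2}.}

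The plan is to compute $Q_\emptyset$ and $R_\emptyset$ directly from the definitions, and then to evaluate $Q_e$ and $R_e$ by exploiting the telescoping structure of the products $F_e$ and $G_e$.

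\emph{Part (a).} When $d=\emptyset$, the exponent set $\{0,\dots,\ell-1\}^{\emptyset}$ is a singleton (the empty tuple), so \eqref{e5.9} collapses to $F_\emptyset(\bx_e, x_i^{(0)},\dots,x_i^{(\ell-1)})=\boldsymbol{\lambda}_e(\bx_e)-1$, which does not depend on the $x_i^{(\omega)}$. Likewise, for each $e'\in\calh\setminus\{e\}$ we have $e'\cap(\emptyset\cup\{i\})=e'\cap\{i\}=\{i\}$ (since every $e'\in\binom{[n]}{n-1}$ with $e'\neq e=[n]\setminus\{i\}$ must contain $i$), so $\{0,\dots,\ell-1\}^{e'\cap\{i\}}$ is indexed by a single coordinate $\omega_i\in\{0,\dots,\ell-1\}$. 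In case $d\nsubseteq e'$ the first branch of \eqref{e5.10} never triggers, and the auxiliary function $g_{e',\emptyset,\omega_i}$ is just $\mathbf{g}_{e'}^{(\omega_i)}$ evaluated at $(\bx_{e'\setminus\{i\}},x_i^{(\omega_i)})$ — wait, one must be careful: when $d=\emptyset$, $d\subseteq e'$ holds vacuously, so the \emph{first} branch applies and $g_{e',\emptyset,\omega_i}=\mathbf{g}_{e'}^{(\omega_i)}(\bx_{e'\setminus\{i\}},x_i^{(\omega_i)})$. Hence $G_\emptyset=\prod_{e'\in\calh\setminus\{e\}}\prod_{\omega_i=0}^{\ell-1}\mathbf{g}_{e'}^{(\omega_i)}(\bx_{e'\setminus\{i\}},x_i^{(\omega_i)})=\prod_{\omega=0}^{\ell-1}\prod_{e'\in\calh\setminus\{e\}}g_{e'}^{(\omega)}(\bx_e,x_i^{(\omega)})$ after identifying $\mathbf g_{e'}^{(\omega)}\circ\pi_{e'}=g_{e'}^{(\omega)}$ via \eqref{e1.5}. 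Taking expectations, $Q_\emptyset=\ave[(\boldsymbol\lambda_e-1)\,G_\emptyset]$ and $R_\emptyset=\ave[G_\emptyset]$ are exactly the left-hand sides of \eqref{e5.2} and \eqref{e5.3}; this is a routine unwinding of notation.

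\emph{Part (b).} When $d=e$, the set $d\cup\{i\}=[n]$, so we are integrating over $\bbx^\ell=\bbx_{[n]}^\ell$ (the factor $\bbx_{e\setminus e}=\bbx_\emptyset$ disappears), and for every $e'\in\calh$ we have $e'\subseteq e=d$ is false (since $|e'|=|e|$ but $e'\neq e$) — actually $e'\cap(d\cup\{i\})=e'\cap[n]=e'$, and $d=e\nsubseteq e'$ unless $e'=e$, which is excluded. So the first branch of \eqref{e5.10} never applies for $d=e$, and $g_{e',e,\omega}$ is either $\boldsymbol\lambda_{e'}(\bx_{e'}^{(\omega)})$ (in case (i)) or the constant $1$ (in cases (ii), (iii)). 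Thus $G_e=\prod_{e'\in\calh\setminus\{e\}:\,g_{e'}\mik\lambda_{e'}}\ \prod_{\omega\in\{0,\dots,\ell-1\}^{e'}}\boldsymbol\lambda_{e'}(\bx_{e'}^{(\omega)})$, which is precisely a product of the form appearing inside \eqref{e4.4} with exponents $n_{e',\omega}\in\{0,1\}$ (equal to $1$ exactly on those $e'$ in the restricted set, $0$ otherwise). Meanwhile $F_e=\prod_{\omega\in\{0,\dots,\ell-1\}^e}(\boldsymbol\lambda_e(\bx_e^{(\omega)})-1)$. Expanding this product, $F_e=\sum_{S\subseteq\{0,\dots,\ell-1\}^e}(-1)^{|S^c|}\prod_{\omega\in S}\boldsymbol\lambda_e(\bx_e^{(\omega)})$, so $F_e\,G_e$ is, up to signs, a sum of $2^{\ell^{|e|}}$ products each of which is of the exact shape appearing in \eqref{e4.4} (with the $\boldsymbol\lambda_e$-exponents being $\mathbf 1_S$ and the other exponents being $n_{e',\omega}$). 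Therefore $R_e=\ave[G_e]\mik 1+\eta$ by the upper bound in \eqref{e4.4}, and $R_e\meg 0$ since all factors are nonnegative, giving the second assertion. For $Q_e=\ave[F_eG_e]$, I apply \eqref{e4.4} termwise: each of the $2^{\ell^{|e|}}$ expectations lies in $[1-\eta,1+\eta]$, and the signed sum telescopes — $\sum_S(-1)^{|S^c|}\cdot 1=0$ — so $|Q_e|\mik\sum_S|(-1)^{|S^c|}|\cdot\eta$ is too crude; instead one writes $Q_e=\sum_S(-1)^{|S^c|}(\ave[\prod_{\omega\in S}\boldsymbol\lambda_e(\bx_e^{(\omega)})\,G_e]-1)$ using $\sum_S(-1)^{|S^c|}=0$, and bounds $|Q_e|\mik 2^{\ell^{|e|}}\eta$. \textbf{The main obstacle} is that this bound gives $2^{\ell^{|e|}}\eta$ rather than the claimed $2\eta$; to get the sharp constant one should \emph{not} fully expand $F_e$ but instead peel off one factor at a time. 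Concretely, writing $\{0,\dots,\ell-1\}^e=\{\omega_1,\dots,\omega_N\}$ with $N=\ell^{|e|}$ and setting $F_e^{(k)}=\prod_{j\le k}(\boldsymbol\lambda_e(\bx_e^{(\omega_j)})-1)$, one shows inductively that $|\ave[F_e^{(k)}\,H]|\mik 2\eta$ whenever $H$ is a nonnegative product of $\boldsymbol\lambda_{e'}$-factors and $\boldsymbol\lambda_e(\bx_e^{(\omega_j)})$-factors with $j>k$, subject to the relevant $\ave[H]\in[1-\eta,1+\eta]$ (from \eqref{e4.4}) and $\ave[\boldsymbol\lambda_e(\bx_e^{(\omega_{k})})H]\in[1-\eta,1+\eta]$: indeed $\ave[F_e^{(k)}H]=\ave[F_e^{(k-1)}\boldsymbol\lambda_e(\bx_e^{(\omega_k)})H]-\ave[F_e^{(k-1)}H]$, and provided $k-1$ cancellations have already been absorbed one is left with a difference of two quantities each within $\eta$ of the same reference value. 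Carrying this induction from $k=N$ down (base case $F_e^{(0)}=1$, bound $|\ave[H]-1|\mik\eta$) yields $|Q_e|\mik 2\eta$. I expect the bookkeeping of exactly which intermediate products satisfy the hypotheses of \eqref{e4.4} to be the fiddly part, but it is purely combinatorial once the inductive scheme is set up.
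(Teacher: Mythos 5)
Part (a) of your proof is correct and matches the paper: when $d=\emptyset$ the index set $\{0,\dots,\ell-1\}^\emptyset$ is a singleton so $F_\emptyset=\boldsymbol{\lambda}_e-1$, and since every $e'\in\calh\setminus\{e\}$ contains $i$ the condition $\emptyset\subseteq e'$ always triggers the first branch of \eqref{e5.10}, yielding exactly the integrands of \eqref{e5.2} and \eqref{e5.3}. Your unwinding of $F_e$ and $G_e$ in part (b), and the identification of $R_e$ with an expectation of the form \eqref{e4.4} (so that $0\mik R_e\mik 1+\eta$), are also fine.

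Your concern about the bound $|Q_e|\mik 2\eta$ is well placed, but the telescoping you propose does not close. In the inductive step $\ave[F_e^{(k)}H]=\ave[F_e^{(k-1)}\boldsymbol\lambda_e(\bx_e^{(\omega_k)})H]-\ave[F_e^{(k-1)}H]$ you write that the two terms are ``each within $\eta$ of the same reference value,'' but this is only true at $k=1$, where the reference value is $1$ and \eqref{e4.4} controls both $\ave[\lambda H]$ and $\ave[H]$. For $k\meg 2$ the inductive hypothesis only tells you that each of the two quantities lies in $[-2\eta,2\eta]$; there is no common centre closer than $0$, so the difference is only bounded by $4\eta$. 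Unrolled, your induction gives $|\ave[F_e^{(k)}H]|\mik 2^k\eta$, hence $|Q_e|\mik 2^{\ell^{n-1}}\eta$ — exactly the crude bound you were trying to improve. The paper's own proof is essentially just ``by \eqref{e4.4}'' applied after displaying the integrand, and this argument too only yields $2^{\ell^{n-1}}\eta$ when carried out: expanding $\prod_\omega(\boldsymbol\lambda_e(\bx_e^{(\omega)})-1)$ gives $2^{\ell^{n-1}}$ signed terms, each an expectation in $[1-\eta,1+\eta]$, and the signs cancel only in the ``main'' part, leaving $\sum_S|\ave[\cdots]_S-1|\mik 2^{\ell^{n-1}}\eta$. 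So the stated constant $2\eta$ appears to be an overstatement. The good news is that the weaker bound is entirely sufficient: the only downstream use is in \eqref{e5.26}, where $Q_e$ enters as $Q_e^{1/\ell^{n-1}}$, and $(2^{\ell^{n-1}}\eta)^{1/\ell^{n-1}}=2\,\eta^{1/\ell^{n-1}}$, which is still absorbed into $\bar\eta=(2C)^{n\ell}\eta^{1/\ell^{n-1}}$ since $2\mik(2C)^{\ell}$. In short: do not chase the $2\eta$ constant; state and prove $|Q_e|\mik 2^{\ell^{n-1}}\eta$ by the naive expansion, and note that this suffices.
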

\begin{proof}[Proof of Claim \emph{\ref{c5.2}}]
For part (a) it is enough to observe that
\[ F_\emptyset(\bx_e, x_i^{(0)},\dots, x_i^{(\ell-1)})=\boldsymbol{\lambda}_e(\bx_e)-1 \]
and
\begin{eqnarray*}
G_\emptyset(\bx_e, x_i^{(0)},\dots, x_i^{(\ell-1)}) & = & \prod_{e'\in\calh\setminus\{e\}} \prod_{\omega\in\{0,\dots,\ell-1\}^{\{i\}}}
\mathbf{g}_{e'}^{(\omega_i)}(\bx_{e'\setminus \{i\}}, x_i^{(\omega)}) \\
& = & \prod_{\omega=0}^{\ell-1} \prod_{e'\in\calh\setminus\{e\}}g_{e'}^{(\omega)}(\bx_{e}, x^{(\omega)}_{i}).
\end{eqnarray*}
For part (b) notice that
\[ Q_{e}= \ave\Big[\!\!\!\!\!\prod_{\omega\in\{0,\dots,\ell-1\}^{e}}\!\!\!\!\!\!\!\big(\mathbf{\lambda}_{e} (\bx_{e}^{(\omega)})-1\big)\!\!\!\!
\prod_{e' \in \calh\setminus\{e\}} \prod_{\omega\in\{0,\dots,\ell-1\}^{e'}} \!\!\!\!\!\!\!\!\!\boldsymbol{\lambda}_{e'}^{n_{e',\omega}}(\bx_e^{(\omega)})
\,\Big|\, \bx^{(0)},\dots,\bx^{(\ell-1)}\in\bbx\Big] \]
and
\[ R_{e}= \ave\Big[\prod_{e' \in \calh\setminus\{e\}}
\prod_{\omega\in\{0,\dots,\ell-1\}^{e'}}\!\!\!\!\!\boldsymbol{\lambda}_{e'}^{n_{e',\omega}}(\bx_e^{(\omega)})
\,\Big|\, \bx^{(0)},\dots,\bx^{(\ell-1)}\in\bbx\Big] \]
where for every $e'\in\calh\setminus\{e\}$ and every $\omega\in \{0,\dots,\ell-1\}^{e'}$ we have $n_{e',\omega}\in\{0,1\}$ and $n_{e',\omega}=1$
if and only if $g_{e'}^{(\omega_i)}\mik \lambda_{e'}$. Therefore, by \eqref{e4.4}, we conclude that $|Q_e|\mik 2\eta$ and $0\mik R_e\mik 1+\eta$.
\end{proof}
The following claim is the last step of the proof.
\begin{claim} \label{c5.3}
For every $d\varsubsetneq e$ and every  $j\in e\setminus d$ we have $Q_{d\cup \{j\}}\meg 0$, and
\begin{equation} \label{e5.13}
|Q_d|^{1/\ell^{|d|}} \mik (2C)^{\ell}\, Q_{d \cup \{j\}}^{1/\ell^{|d|+1}} \ \text{ and } \
R_d^{1/\ell^{|d|}} \mik (2C)^{\ell} \, R_{d \cup \{j\}}^{1/\ell^{|d|+1}}.
\end{equation}
\end{claim}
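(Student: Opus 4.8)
The plan is to prove \eqref{e5.13} by a single Cauchy--Schwarz step that duplicates the variable $x_j$ into $\ell$ copies; the duplicated expression will reassemble into $Q_{d\cup\{j\}}$ (resp.\ $R_{d\cup\{j\}}$) up to a multiplicative error that is absorbed into $(2C)^\ell$. Throughout I fix $j\in e\setminus d$ and isolate the unique member $e'$ of $\calh\setminus\{e\}$ with $j\notin e'$, namely $e'=[n]\setminus\{j\}$ (it is unique since $\calh={n\choose n-1}$). I would first read off from \eqref{e5.9}--\eqref{e5.11} that $F_d$ and all factors of $G_d$ indexed by edges containing $j$ depend on $x_j$, whereas the block $\Psi$ of $G_d$ coming from $e'$ does not (here $d\subseteq e'$, so these factors are the genuine functions $\mathbf{g}^{(\cdot)}_{e'}$). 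Writing $G_d=G^{(\ni j)}_d\,\Psi$ and conditioning on $x_j$ gives $Q_d=\ave\big[\Psi\cdot\ave[F_dG^{(\ni j)}_d\mid x_j\in X_j]\big]$.

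Next I split $\Psi=\Psi_\lambda\Psi_\varphi\Psi_1$ according to whether the underlying $g^{(\cdot)}_{e'}$ is bounded by $\lambda_{e'}$, by $\varphi_{e'}$, or by $1$. Using $\Psi_1\mik 1$ and $\Psi\meg 0$, factoring $\Psi_\lambda=\Psi_\lambda^{1/\ell'}\cdot\Psi_\lambda^{1/\ell}$ with $\ell'$ the conjugate exponent of $\ell$, and applying H\"older, I obtain
\[ |Q_d|\mik \ave\big[\Psi_\varphi^{\ell'}\Psi_\lambda\big]^{1/\ell'}\cdot\ave\Big[\Psi_\lambda\cdot\big(\ave[F_dG^{(\ni j)}_d\mid x_j\in X_j]\big)^{\ell}\Big]^{1/\ell}. \]
For the second factor I would note that $\Psi_\lambda$ is dominated pointwise by $\bar\Psi$, the block of $G_{d\cup\{j\}}$ coming from $e'$, which (since $d\cup\{j\}\nsubseteq e'$) is a product of functions $\boldsymbol{\lambda}_{e'}$ and $1$. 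Since $\ell$ is even and $\bar\Psi,\Psi_\lambda$ are free of $x_j$,
\[ \ave\Big[\Psi_\lambda\big(\ave[F_dG^{(\ni j)}_d\mid x_j\in X_j]\big)^{\ell}\Big]\mik\ave\Big[\bar\Psi\big(\ave[F_dG^{(\ni j)}_d\mid x_j\in X_j]\big)^{\ell}\Big]=\ave\big[\bar\Psi\, F_{d\cup\{j\}}G^{(\ni j)}_{d\cup\{j\}}\big]=Q_{d\cup\{j\}}, \]
the middle identity being the routine reassembly $\prod_{m=0}^{\ell-1}\big(F_dG^{(\ni j)}_d\big)\big|_{x_j\mapsto x_j^{(m)}}=F_{d\cup\{j\}}G^{(\ni j)}_{d\cup\{j\}}$ read off from \eqref{e5.9}, \eqref{e5.10}, \eqref{e5.11}; nonnegativity of the last integrand also yields $Q_{d\cup\{j\}}\meg 0$. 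Deleting $F_d$ and $F_{d\cup\{j\}}$ everywhere gives the identical chain for $R_d$.

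It then remains to show $\ave[\Psi_\varphi^{\ell'}\Psi_\lambda]\mik\ave[\Psi_\varphi^{\ell'}\bar\Psi]\mik(2C)^{\ell'\ell^{|d|+1}}$, and this is the crux. Writing $\by$ for the undoubled variables $\bx_{e'\setminus(d\cup\{i\})}$, the quantity $\ave[\Psi_\varphi^{\ell'}\bar\Psi]$ equals $\ave_{\by}\big[\ave[\prod_{\omega}h_\omega(\bx_{d\cup\{i\}}^{(\omega)})\mid \bx^{(0)}_{d\cup\{i\}},\dots,\bx^{(\ell-1)}_{d\cup\{i\}}\in\bbx_{d\cup\{i\}}]\big]$, a full product over $\omega\in\{0,\dots,\ell-1\}^{d\cup\{i\}}$ in which each $h_\omega$ is $\boldsymbol{\lambda}_{e'}(\by,\cdot)$ (on a set $B$), $|\boldsymbol{\varphi}_{e'}(\by,\cdot)|^{\ell'}$ (on a set $A$), or $1$ (on the complement). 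For fixed $\by$, I bound the inner expectation by the Gowers--Cauchy--Schwarz inequality (Proposition~\ref{p2.1}(a)) on $\bbx_{d\cup\{i\}}$, getting $\|\boldsymbol{\lambda}_{e'}(\by,\cdot)\|_{\square_\ell(\bbx_{d\cup\{i\}})}^{|B|}\cdot\||\boldsymbol{\varphi}_{e'}(\by,\cdot)|^{\ell'}\|_{\square_\ell(\bbx_{d\cup\{i\}})}^{|A|}$, and then apply H\"older in $\by$ with exponents $\ell^{|d|+1}/|B|$, $\ell^{|d|+1}/|A|$, and $\ell^{|d|+1}/(\ell^{|d|+1}-|A|-|B|)$ (the last slot carrying the constant $1$). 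The $\lambda$-contribution $\ave_{\by}\big[\|\boldsymbol{\lambda}_{e'}(\by,\cdot)\|^{\ell^{|d|+1}}_{\square_\ell(\bbx_{d\cup\{i\}})}\big]=\ave\big[\prod_{\omega\in\{0,\dots,\ell-1\}^{d\cup\{i\}}}\boldsymbol{\lambda}_{e'}(\by,\bx^{(\omega)}_{d\cup\{i\}})\big]$ is $\mik 1+\eta$ by condition~(I) (take $n_{e',\omega}=1$ precisely for those $\omega$ vanishing on $e'\setminus(d\cup\{i\})$, and all other $n_{e'',\omega}=0$), while the $\varphi$-contribution $\ave_{\by}\big[\||\boldsymbol{\varphi}_{e'}(\by,\cdot)|^{\ell'}\|^{\ell^{|d|+1}}_{\square_\ell(\bbx_{d\cup\{i\}})}\big]=\ave\big[\prod_{\omega}|\boldsymbol{\varphi}_{e'}|^{\ell'}(\by,\bx^{(\omega)}_{d\cup\{i\}})\big]$ is $\mik \|\boldsymbol{\varphi}_{e'}\|^{\ell'\ell^{|d|+1}}_{\square_{\ell,\ell'}(\bbx_{e'})}\mik C^{\ell'\ell^{|d|+1}}$ by Proposition~\ref{p2.3}(a) (applied on $\bbx_{e'}$ with exponent $\ell'$ and the remaining functions equal to $1$), Proposition~\ref{p2.3}(c.ii), and condition~(II), using that $\ell\meg q$ forces $\ell'\mik p$. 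Multiplying these gives $\ave[\Psi_\varphi^{\ell'}\bar\Psi]\mik(1+\eta)\,C^{\ell'\ell^{|d|+1}}\mik(2C)^{\ell'\ell^{|d|+1}}$, whence $|Q_d|\mik(2C)^{\ell^{|d|+1}}Q_{d\cup\{j\}}^{1/\ell}$, which is \eqref{e5.13}; the argument for $R_d$ is verbatim.

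The main obstacle is precisely this last estimate. Since $\lambda_{e'}$ lies only in $L_1$, one cannot decouple $\Psi_\varphi^{\ell'}$ from $\bar\Psi$ by an ordinary H\"older inequality (which would raise $\lambda_{e'}$ to a power exceeding $1$), nor can one apply Gowers--Cauchy--Schwarz on the full space $\bbx_{e'}$ (which would require bounding $\|\boldsymbol{\lambda}_{e'}\|_{\square_\ell(\bbx_{e'})}$). The separation must be carried out over the doubled coordinates $d\cup\{i\}$ only, so that the $\lambda$-factors stay linear and the product of $\ell^{|d|+1}$ of them (averaged over the undoubled variable $\by$) is controlled exactly by the linear-forms condition~(I); keeping track that the $\lambda$-part of this block is literally the block $\bar\Psi$ appearing in $G_{d\cup\{j\}}$ is the bookkeeping point that makes the reassembly produce $Q_{d\cup\{j\}}$ and not some larger quantity.
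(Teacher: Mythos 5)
Your proposal is correct and follows essentially the same route as the paper: you perform the same conditioning on $x_j$ isolating the $e_j=[n]\setminus\{j\}$ block, the same H\"older step with $\Psi_\lambda=\Psi_\lambda^{1/\ell}\Psi_\lambda^{1/\ell'}$ and $\Psi_1\mik 1$ (the paper's \eqref{e5.19}), the same pointwise domination of $\Psi_\lambda,\Psi_\varphi$ by the corresponding $\overline{\mathbf{G}}$-blocks, the same reassembly identity giving $Q_{d\cup\{j\}}$ (the paper's \eqref{e5.23}) and its nonnegativity, and the same Gowers--Cauchy--Schwarz estimate for $\ave[\overline{\mathbf{G}}_{j,d,\lambda}\,\overline{\mathbf{G}}_{j,d,\varphi}^{\ell'}]$, using condition (I) for the $\lambda$-norm and Proposition \ref{p2.3}(c.ii) with $\ell'<p$ for the $\varphi$-norm. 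The only difference is presentational: you make explicit the ``freeze $\bx_f$, apply GCS on $\bbx_{d\cup\{i\}}$, then H\"older in $\bx_f$'' structure behind the paper's single invocation of Proposition \ref{p2.1}(a) in \eqref{e5.24}.
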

\begin{proof}[Proof of Claim \emph{\ref{c5.3}}]
We will only show that $|Q_d|^{1/\ell^{|d|}} \mik (2C)^{\ell}\,Q_{d \cup \{j\}}^{1/\ell^{|d|+1}}$. The proof of the corresponding inequality
for $R_d$ is identical. (In particular, it follows by setting $F_d=1$ below.)

Fix $d\varsubsetneq e$ and $j\in e\setminus d$, and set $e_j=[n]\setminus\{j\}$ and $f=[n]\setminus(d\cup\{i,j\})$. Also write
$\bbx_{e_j}=\bbx_{f}\times \bbx_{d\cup \{i\}}$ and $\bbx_{e\setminus d}\times \bbx_{d\cup\{i\}}^\ell=X_j\times\bbx_{f}\times \bbx_{d\cup\{i\}}^\ell$,
and let $\pi\colon \bbx_{e\setminus d}\times \bbx_{d\cup\{i\}}^\ell\to \bbx_{f}\times \bbx_{d\cup\{i\}}^\ell$ denote the natural projection.

For every $\omega\in\{0,\dots,\ell-1\}^{d\cup\{i\}}$ we define $\mathbf{g}_{e_j,d,\omega}\colon\bbx_{f}\times \bbx_{d\cup\{i\}}^\ell\to \rr$ by
\begin{equation} \label{e5.14}
\mathbf{g}_{e_j,d,\omega}(\bx_{f},\bx_{d\cup \{i\}}^{(0)},\dots,\bx_{d\cup \{i\}}^{(\ell-1)})=
\mathbf{g}_{e_j}^{(\omega_i)}(\bx_{f}, \bx_{d\cup\{i\}}^{(\omega)}).
\end{equation}
(Recall that $\omega_i$ is the $i$-th coordinate of $\omega$ and  $\mathbf{g}_{e_j}^{(\omega_i)}$ is as in \eqref{e5.1} for
$g_{e_j}^{(\omega_i)}$.) Observe that, by \eqref{e5.10}, for every $\omega\in \{0,\dots,\ell-1\}^{d\cup\{i\}}$ we have
\begin{equation} \label{e5.15}
g_{e_j,d,\omega}=\mathbf{g}_{e_j,d,\omega}\circ \pi.
\end{equation}
Next, let $\Omega_\lambda$, $\Omega_\varphi$ and $\Omega_1$ denote the subsets of $\{0,\dots,\ell-1\}^{d\cup\{i\}}$ consisting of all $\omega$
such that $g_{e_j}^{(\omega_i)}\mik \lambda_{e_j}$, $g_{e_j}^{(\omega_i)}\mik \varphi_{e_j}$ and  $g_{e_j}^{(\omega_i)}\mik 1$ respectively. Set
\begin{equation} \label{e5.16}
\mathbf{G}_{j,d,\lambda}=\prod_{\omega\in \Omega_\lambda}
\mathbf{g}_{e_j,d,\omega}, \ \ \mathbf{G}_{j,d,\varphi}=\prod_{\omega\in \Omega_\varphi}
\mathbf{g}_{e_j,d,\omega} \ \text{ and } \ \mathbf{G}_{j,d,1}=\prod_{\omega\in \Omega_1}
\mathbf{g}_{e_j,d,\omega}
\end{equation}
and notice that these functions are defined on $\bbx_{f}\times \bbx_{d\cup\{i\}}^\ell$. We also define
$G_{j,d}'\colon\bbx_{e\setminus d} \times \bbx_{d\cup\{i\}}^\ell\to \rr$ by the rule
\begin{equation} \label{e5.17}
G_{j,d}'=\prod_{e'\in\calh\setminus\{e,e_j\}}\prod_{\omega\in\{0,\dots,\ell-1\}^{e'\cap(d\cup\{i\})}}
g_{e',d,\omega}.
\end{equation}
By \eqref{e5.11} and \eqref{e5.15}--\eqref{e5.17}, we have
$G_d=G_{j,d}'\cdot \big[(\mathbf{G}_{j,d, \lambda}\,\mathbf{G}_{j,d,\varphi}\,\mathbf{G}_{j,d,1})\circ \pi\big]$ and so
\begin{equation} \label{e5.18}
Q_d=\ave\big[\ave[F_d \,G_{j,d}'\,|\, x_j\in X_j] \cdot (\mathbf{G}_{j,d,\lambda}\,\mathbf{G}_{j,d,\varphi}\,\mathbf{G}_{j,d,1})\big]
\end{equation}
where the outer expectation is over the space $\bbx_{f}\times \bbx_{d\cup\{i\}}^\ell$. Denote by $\ell'$ the conjugate exponent of $\ell$ and recall that
$\ell$ is an even positive integer. By \eqref{e5.18}, H\"older's inequality and the fact that $0\mik \mathbf{G}_{j,d,1}\mik 1$, we obtain that
\begin{eqnarray} \label{e5.19}
|Q_d| & = & \big|\ave \big[ \ave[F_d\,G_{j,d}'\,|\, x_j\in X_j] \cdot
\big( (\mathbf{G}_{j,d,\lambda}^{1/\ell}\,\mathbf{G}_{j,d,\lambda}^{1/\ell'})\, \mathbf{G}_{j,d,\varphi}\,\mathbf{G}_{j,d,1}\big)\big]\big| \\
& \mik & \ave\big[ \ave[F_d\,G_{j,d}' \, |\, x_j\in X_j ]^\ell \cdot \mathbf{G}_{j,d,\lambda}\big]^{1/\ell} \cdot
\ave[\mathbf{G}_{j,d,\lambda} \, \mathbf{G}_{j,d,\varphi}^{\ell'}]^{1/\ell'}. \nonumber
\end{eqnarray}
Now define $\overline{\mathbf{G}}_{j,d, \lambda}, \overline{\mathbf{G}}_{j,d,\varphi}\colon\bbx_{f}\times \bbx_{d\cup\{i\}}^\ell\to \rr$ by
\begin{equation} \label{e5.20}
\overline{\mathbf{G}}_{j,d,\lambda}(\bx_{f},\bx_{d\cup\{i\}}^{(0)},\dots,\bx_{d\cup\{i\}}^{(\ell-1)})=
\prod_{\omega\in\Omega_\lambda} \boldsymbol{\lambda}_{e_j}(\bx_{f},\bx_{d\cup\{i\}}^{(\omega)})
\end{equation}
and
\begin{equation} \label{e5.21}
\overline{\mathbf{G}}_{j,d,\varphi}(\bx_{f},\bx_{d\cup\{i\}}^{(0)},\dots,\bx_{d\cup\{i\}}^{(\ell-1)})=
\prod_{\omega\in\Omega_\varphi} \boldsymbol{\varphi}_{e_j}(\bx_{f},\bx_{d\cup\{i\}}^{(\omega)}).
\end{equation}
By \eqref{e5.14} and \eqref{e5.16}, we see that $0\mik \mathbf{G}_{j,d,\lambda}\mik \overline{\mathbf{G}}_{j,d,\lambda}$ and
$0\mik \mathbf{G}_{j,d,\varphi}\mik \overline{\mathbf{G}}_{j,d,\varphi}$. Therefore, by \eqref{e5.19}, we have
\begin{equation} \label{e5.22}
|Q_d|\mik \ave\big[\ave[F_d \,G_{j,d}'\,|\, x_j\in X_j]^\ell \cdot \overline{\mathbf{G}}_{j,d,\lambda}\big]^{1/\ell} \cdot
\ave[\overline{\mathbf{G}}_{j,d,\lambda}\,\overline{\mathbf{G}}_{j,d,\varphi}^{\ell'}]^{1/\ell'}.
\end{equation}
It is easy to see that
\begin{equation} \label{e5.23}
\ave\big[ \ave[F_d \,G_{j,d}'\,|\, x_j\in X_j]^\ell \cdot \overline{\mathbf{G}}_{j,d,\lambda}\big]=Q_{d\cup\{j\}}
\end{equation}
which implies, in particular, that $Q_{d\cup\{j\}}\meg 0$. On the other hand, by \eqref{e5.20}, \eqref{e5.21} and part (a)
of Proposition \ref{p2.1}, we obtain that
\begin{equation} \label{e5.24}
\ave[\overline{\mathbf{G}}_{j,d,\lambda}\, \overline{\mathbf{G}}_{j,d,\varphi}^{\ell'}] \mik
\|\boldsymbol{\lambda}_{e_j}\|^{|\Omega_\lambda|}_{\square_\ell(\bbx_{e_j})}\cdot
\|\boldsymbol{\varphi}_{e_j}^{\ell'}\|^{|\Omega_\varphi|}_{\square_\ell(\bbx_{e_j})}.
 \end{equation}
By \eqref{e4.4}, it is clear that $\|\boldsymbol{\lambda}_{e_j}\|_{\square_\ell(\bbx_{e_j})}\mik 1+\eta\mik 2$. Moreover, by \eqref{e4.3},
we see that $1<\ell'<p$. Hence, by part (c.ii) of Proposition \ref{p2.3} and condition (II) in Theorem~\ref{t4.2}, we have
$\|\boldsymbol{\varphi}_{e_j}^{\ell'}\|_{\square_\ell(\bbx_{e_j})}\mik
\|\boldsymbol{\varphi}_{e_j}\|_{\square_{\ell,p}(\bbx_{e_j})}^{\ell'}\mik C^{\ell'}$.
Thus, by \eqref{e5.24},
\begin{equation} \label{e5.25}
\ave[\overline{\mathbf{G}}_{j,d,\lambda}\,\overline{\mathbf{G}}_{j,d,\varphi}^{\ell'}]^{1/\ell'} \mik
2^{|\Omega_\lambda|/\ell'}\cdot C^{|\Omega_\varphi|} \mik (2C)^{|\Omega_\lambda|+|\Omega_\varphi|} \mik (2C)^{\ell^{|d|+1}}.
\end{equation}
Combining \eqref{e5.22}, \eqref{e5.23} and \eqref{e5.25}, we get that $|Q_d |\mik Q_{d\cup\{j\}}^{1/\ell} (2C)^{\ell^{|d|+1}}$
which is equivalent to saying that $|Q_d|^{1/\ell^{|d|}} \mik (2C)^{\ell} \, Q_{d \cup \{j\}}^{1/\ell^{|d|+1}}$.
The proof of Claim \ref{c5.3} is completed.
\end{proof}
By induction and using Claim \ref{c5.3}, we see that
\begin{equation} \label{e5.26}
|Q_\emptyset|\mik (2C)^{(n-1)\ell}\, Q_{e}^{1/\ell^{n-1}} \ \text{ and } \ R_\emptyset\mik (2C)^{(n-1)\ell}\, R_{e}^{1/\ell^{n-1}}.
\end{equation}
Invoking \eqref{e5.26} and Claim \ref{c5.2}, we conclude that \eqref{e5.2} and \eqref{e5.3} are satisfied, and so the proof of Lemma \ref{l5.1}
is completed.

%-----------------Proof of Theorem \ref{t4.3}-----------------%

\section{Proof of Theorem \ref{t4.3}}

\numberwithin{equation}{section}

Let $\mathscr{H}$ and $\langle\nu_e,\psi_e:e\in\calh\rangle$ be as in the statement of the theorem,
and for every $e\in\calh$ let $\boldsymbol{\nu}_e$ and $\boldsymbol{\psi}_e$ be as in \eqref{e1.5} for
$\nu_e$ and $\psi_e$ respectively.

The following lemma is the first main step of the proof.
\begin{lem} \label{l6.1}
Let $e\in\calh$ and let $i\in [n]$ be the unique integer such that $e=[n]\setminus\{i\}$. For every $e'\in\calh\setminus\{e\}$ and every
$\omega\in\{0,\dots,\ell-1\}$ let $g_{e'}^{(\omega)}\in L_1(\bbx,\calb_{e'},\bmu)$ such that either: \emph{(i)} $0\mik g_{e'}^{(\omega)}\mik \nu_{e'}$,
or \emph{(ii)} $0 \mik g_{e'}^{(\omega)} \mik \psi_{e'}$, or \emph{(iii)} $0 \mik g_{e'}^{(\omega)} \mik 1$. Then
\begin{equation} \label{e6.1}
\!\!\!\Big|\ave\Big[(\bnu_{e}-\boldsymbol{\psi}_e)(\bx_{e})\!\prod_{\omega=0}^{\ell-1}\prod_{e'\in\calh\setminus\{e\}}\!\!\!\!\!
g_{e'}^{(\omega)}(\bx_{e}, x_{i}^{(\omega)}) \Big|\, \bx_{e}\in\bbx_{e},x_i^{(0)},\dots, x_i^{(\ell-1)}\! \in X_i\Big] \Big|\!\mik\!\eta.
\end{equation}
\end{lem}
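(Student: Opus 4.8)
The plan is to establish \eqref{e6.1} by the Cauchy--Schwarz ``densification'' scheme used in the proof of Lemma~\ref{l5.1}, with the constant function $1$ there replaced throughout by $\bpsi_e$ --- indeed, in the notation of Lemma~\ref{l5.1} the distinguished function $\bnu_e-\bpsi_e$ \emph{is} the function $\boldsymbol{\lambda}_e-1$. The one essential structural difference is that here no linear forms hypothesis on the weights $\nu_{e'}$ is available; we only have the integrability bounds $\|\bnu_{e'}\|_{\square_{\ell,p}(\bbx_{e'})}\mik M$ and $\|\bpsi_{e'}\|_{\square_{\ell,p}(\bbx_{e'})}\mik C$ from condition~(II) of Theorem~\ref{t4.3}. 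Accordingly, at every step of the iteration I would absorb \emph{all} of the auxiliary factors attached to the edge currently being processed --- whether dominated by $\bnu_{e'}$, by $\bpsi_{e'}$, or by $1$ --- entirely into the error term, rather than letting any of them persist. The iteration then terminates with $Q_e$ equal to a pure power of $\|\bnu_e-\bpsi_e\|_{\square_\ell(\bbx_e)}$, the accumulated constants telescope to $(CM)^{(n-1)\ell}$, and \eqref{e4.6} delivers the bound $\eta$.

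Concretely, for each $d\subseteq e$ (recall $e=[n]\setminus\{i\}$) I would work on $\bbx_{e\setminus d}\times\bbx_{d\cup\{i\}}^\ell$ and put
\[ F_d=\prod_{\omega\in\{0,\dots,\ell-1\}^d}\big(\bnu_e-\bpsi_e\big)(\bx_{e\setminus d},\bx_d^{(\omega)}),\qquad G_d=\prod_{\substack{e'\in\calh\setminus\{e\}\\ d\subseteq e'}}\ \prod_{\omega\in\{0,\dots,\ell-1\}^{d\cup\{i\}}}\mathbf{g}_{e'}^{(\omega_i)}(\bx_{e'\setminus(d\cup\{i\})},\bx_{d\cup\{i\}}^{(\omega)}), \]
with $\mathbf{g}_{e'}^{(\omega)}$ as in \eqref{e1.5}, and set $Q_d=\ave[F_d\,G_d]$. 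Two routine points then have to be recorded: $Q_\emptyset$ is exactly the quantity inside the absolute value in \eqref{e6.1}; and, since there is no edge $e'\neq e$ containing $e$, one has $G_e\equiv 1$ and hence $Q_e=\ave[F_e]=\|\bnu_e-\bpsi_e\|_{\square_\ell(\bbx_e)}^{\ell^{\,n-1}}$ --- here the coordinates indexed by $i$ do not occur in $F_e$ and may be integrated out, leaving precisely the definition \eqref{e1.2} of the $\ell$-box norm on $\bbx_e$ (which is legitimate since $\|\bnu_e-\bpsi_e\|_{\square_\ell(\bbx_e)}<\infty$ by \eqref{e4.6} and $|e|=n-1\meg 2$).

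The heart of the matter is to show, for every $d\subsetneq e$ and every $j\in e\setminus d$, that $Q_{d\cup\{j\}}\meg 0$ and
\[ |Q_d|^{1/\ell^{|d|}}\mik (CM)^{\ell}\cdot Q_{d\cup\{j\}}^{1/\ell^{|d|+1}}. \]
I would argue exactly as in the proof of Claim~\ref{c5.3}: writing $e_j=[n]\setminus\{j\}$, observe that the factors of $G_d$ coming from $e_j$ do not involve $x_j$, split $G_d$ into these ``$e_j$-factors'' and the remaining factors $G_{j,d}'$, apply Fubini in $x_j$ and then H\"older with exponents $\ell$ and its conjugate $\ell'$, to get
\[ |Q_d|\mik \ave\big[\ave[F_d\,G_{j,d}'\mid x_j\in X_j]^{\ell}\big]^{1/\ell}\cdot\ave\big[(\text{$e_j$-factors})^{\ell'}\big]^{1/\ell'}. \]
The first factor unfolds, just as in the passage to \eqref{e5.23}, to $Q_{d\cup\{j\}}^{1/\ell}$, with $Q_{d\cup\{j\}}\meg 0$ because $\ell$ is even. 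For the second factor one dominates each $\mathbf{g}_{e_j}^{(\omega_i)}$ by $\bnu_{e_j}$, by $\bpsi_{e_j}$, or by $1$ according to the case it falls in, then applies the Gowers--Cauchy--Schwarz inequality (Proposition~\ref{p2.1}(a)) in the $\bbx_{d\cup\{i\}}$-coordinates and H\"older in the remaining coordinates; since the number of factors produced for each weight is at most $\ell^{|d|+1}$, the resulting ``partial'' box integrals are controlled by the genuine $\square_\ell(\bbx_{e_j})$-norms (the same comparison that underlies \eqref{e5.24}), so that the second factor is $\mik\|\bnu_{e_j}\|_{\square_{\ell,\ell'}(\bbx_{e_j})}^{|\Omega_\nu|}\|\bpsi_{e_j}\|_{\square_{\ell,\ell'}(\bbx_{e_j})}^{|\Omega_\psi|}$ with $|\Omega_\nu|+|\Omega_\psi|\mik\ell^{|d|+1}$. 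As $1<\ell'<p$ by the choice \eqref{e4.3} of $\ell$, part~(c.ii) of Proposition~\ref{p2.3} and condition~(II) of Theorem~\ref{t4.3} bound this by $M^{|\Omega_\nu|}C^{|\Omega_\psi|}\mik (CM)^{\ell^{|d|+1}}$, which yields the claimed inequality. Iterating it along a chain $\emptyset\subsetneq\{j_1\}\subsetneq\cdots\subsetneq e$ of length $n-1$ and invoking the two points of the previous paragraph gives $|Q_\emptyset|\mik (CM)^{(n-1)\ell}\|\bnu_e-\bpsi_e\|_{\square_\ell(\bbx_e)}$, which is $\mik\eta$ by \eqref{e4.6}.

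The step I expect to require the most care is the second half of the main inequality: the precise unfolding identifying $\ave[\ave[F_d\,G_{j,d}'\mid x_j]^{\ell}]$ with the correct $Q_{d\cup\{j\}}$, and --- more delicately --- the comparison of a ``partial'' $\ell$-box integral, in which only the $\bbx_{d\cup\{i\}}$-coordinates are duplicated while the remaining coordinates are merely averaged, with the full $\square_\ell(\bbx_{e_j})$-norm. This is exactly the comparison implicit in \eqref{e5.24} and should be isolated as a small separate lemma; it holds as soon as the number of factors attached to the norm does not exceed $\ell$ raised to the number of duplicated coordinates, which is precisely why no more integrability than that guaranteed by \eqref{e4.3} is needed.
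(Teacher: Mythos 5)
Your proposal is correct and reproduces the paper's own proof essentially step by step: the paper defines the same $F_d$, $G_d$ and $Q_d$ (cf.\ \eqref{e6.3}--\eqref{e6.5}), observes that $Q_\emptyset$ is the quantity to estimate and that $Q_e=\|\bnu_e-\bpsi_e\|_{\square_\ell(\bbx_e)}^{\ell^{n-1}}$ (cf.\ \eqref{e6.6}), and establishes exactly the descent inequality $|Q_d|^{1/\ell^{|d|}}\mik (CM)^\ell Q_{d\cup\{j\}}^{1/\ell^{|d|+1}}$ (Claim~\ref{c6.2}) by the same Fubini--H\"older step \eqref{e6.11} in which all $e_j$-factors are absorbed into the error term at once. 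The one point you flag as delicate --- bounding the ``partial box'' integral, where only the $\bbx_{d\cup\{i\}}$-coordinates are duplicated, by the full $\square_\ell(\bbx_{e_j})$-norm --- does in fact follow directly from the Gowers--Cauchy--Schwarz inequality of Proposition~\ref{p2.1}(a)/\ref{p2.3}(a) by taking the auxiliary functions indexed by $\omega\in\{0,\dots,\ell-1\}^{e_j}$ to equal $\bnu_{e_j}$ or $\bpsi_{e_j}$ when $\omega\restriction_f=0^f$ and the constant $1$ otherwise, so no separate lemma is actually needed; this is the justification behind \eqref{e6.15}.
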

\begin{proof}
It is similar to the proof of Lemma \ref{l5.1}. Specifically, let $d\subseteq e$ be arbitrary. For every
$e'~\in~\calh~\setminus\{e\}$ with $d \subseteq e'$ and every $\omega \in \{0,\dots,\ell-1\}^{d \cup \{i\}}$ we define
$g_{e',d,\omega}\colon \bbx_{e\setminus d}\times \bbx^\ell_{d\cup\{i\}}\to \rr$ by setting
\begin{equation} \label{e6.2}
g_{e',d,\omega}(\bx_{e\setminus d},\bx_{d\cup \{i\}}^{(0)},\dots,\bx_{d\cup \{i\}}^{(\ell-1)})=
\mathbf{g}_{e'}^{(\omega_i)}(\bx_{e'\setminus (d\cup\{i\})}, \bx_{d\cup\{i\}}^{(\omega)})
\end{equation}
where $\omega_i$ is the $i$-th coordinate of $\omega$ and $\mathbf{g}_{e'}^{(\omega_i)}$ is as in \eqref{e1.5} for $g_{e'}^{(\omega_i)}$.
Also define $F_d,G_d\colon\bbx_{e\setminus d}\times \bbx^\ell_{d\cup\{i\}}\to \rr$ by
\begin{equation} \label{e6.3}
F_d(\bx_{e\setminus d},\bx_{d\cup\{i\}}^{(0)},\dots,\bx_{d\cup\{i\}}^{(\ell-1)})=
\prod_{\omega\in\{0,\dots,\ell-1\}^d} (\boldsymbol{\nu}_{e}-\boldsymbol{\psi}_e)(\bx_{e\setminus d},\bx_d^{(\omega)})
\end{equation}
and
\begin{equation} \label{e6.4}
G_d=\prod_{\substack{e'\in\calh\setminus\{e\}\\ d\, \subseteq e'}}\, \prod_{\omega \in \{0,\dots,\ell-1\}^{d \cup \{i\}}} g_{e',d,\omega}.
\end{equation}
(Here, as in Section 3, we follow the convention that the product of an empty family of functions is equal to the constant function 1.) Finally, let
\begin{equation} \label{e6.5}
Q_d=\ave[F_d \, G_d].
\end{equation}
Note that $Q_\emptyset$ coincides with the quantity appearing in the left-hand side of \eqref{e6.1}. Moreover, we have
\begin{equation} \label{e6.6}
Q_{e}= \|\bnu_e-\boldsymbol{\psi}_e\|^{\ell^{n-1}}_{\square_\ell(\bbx_e)}.
\end{equation}
\begin{claim} \label{c6.2}
For every $d\varsubsetneq e$ and every $j\in e\setminus d$ we have $Q_{d\cup \{j\}}\meg 0$ and
\begin{equation} \label{e6.7}
|Q_d|^{1/\ell^{|d|}} \mik (C\cdot M)^{\ell} \cdot Q_{d \cup \{j\}}^{1/\ell^{|d|+1}}.
\end{equation}
\end{claim}
\begin{proof}[Proof of Claim \emph{\ref{c6.2}}]
As in the proof of Claim \ref{c5.3}, fix $d\varsubsetneq e$ and $j\in e\setminus d$,
and set $e_j=[n]\setminus\{j\}$ and $f=[n]\setminus (d\cup\{i,j\})$. Write $\bbx_{e_j}=\bbx_f\times\bbx_{d\cup \{i\}}$
and $\bbx_{e\setminus d}\times\bbx_{d\cup\{i\}}^\ell=X_j\times(\bbx_{f}\times \bbx_{d\cup\{i\}}^\ell)$, and for every
$\omega\in\{0,\dots,\ell-1\}^{d\cup\{i\}}$ define $\mathbf{g}_{e_j,d,\omega}\colon\bbx_{f}\times \bbx_{d\cup\{i\}}^\ell\to \rr$ by
\begin{equation} \label{e6.8}
\mathbf{g}_{e_j,d,\omega}(\bx_{f},\bx_{d\cup \{i\}}^{(0)},\dots,\bx_{d\cup \{i\}}^{(\ell-1)})=
\mathbf{g}_{e_j}^{(\omega_i)}(\bx_{f},\bx_{d\cup\{i\}}^{(\omega)}).
\end{equation}
If $\pi\colon\bbx_{e\setminus d}\times \bbx_{d\cup\{i\}}^\ell\to \bbx_{f}\times \bbx_{d\cup\{i\}}^\ell$ is the natural projection map, then,
by \eqref{e6.2}, we see that $g_{e_j,d,\omega}=\mathbf{g}_{e_j,d,\omega}\circ \pi$ for every $\omega\in \{0,\dots,\ell-1\}^{d\cup\{i\}}$.

Let $\Omega_\nu$, $\Omega_\psi$ and $\Omega_1$ denote the subsets of $\{0,\dots,\ell-1\}^{d\cup\{i\}}$ consisting of all $\omega$ such that
$g_{e_j}^{(\omega_i)}\mik \nu_{e_j}$, $g_{e_j}^{(\omega_i)}\mik \psi_{e_j}$ and $g_{e_j}^{(\omega_i)}\mik 1$ respectively. We set
\begin{equation} \label{e6.9}
\mathbf{G}_{j,d,\nu}=\prod_{\omega\in \Omega_\nu}\mathbf{g}_{e_j,d,\omega}, \ \
\mathbf{G}_{j,d,\psi}=\prod_{\omega\in \Omega_\psi}\mathbf{g}_{e_j,d,\omega} \ \text{ and } \
\mathbf{G}_{j,d,1}=\prod_{\omega\in \Omega_1}\mathbf{g}_{e_j,d,\omega}.
\end{equation}
Moreover, let $G_{j,d}'\colon\bbx_{e\setminus d}\times \bbx_{d\cup\{i\}}^\ell\to \rr$ be defined by
\begin{equation} \label{e6.10}
G_{j,d}'=\prod_{\substack{e'\in\calh\setminus\{e, e_j\}\\ d\,\subseteq e'}}\, \prod_{\omega\in\{0,\dots,\ell-1\}^{d\cup\{i\}}} g_{e',d,\omega}.
\end{equation}
Observe that $G_d=G_{j,d}'\cdot \big[ (\mathbf{G}_{j,d,\nu}\,\mathbf{G}_{j,d,\varphi}\,\mathbf{G}_{j,d,1})\circ \pi\big]$.
Hence, if $\ell'$ denotes the conjugate exponent of $\ell$, then, by H\"older's inequality, we have
\begin{eqnarray} \label{e6.11}
|Q_d| &= & \big|\ave\big[\ave[F_d \,G_{j,d}'\,|\, x_j\in X_j]\cdot (\mathbf{G}_{j,d,\nu}\,\mathbf{G}_{j,d,\psi}\,\mathbf{G}_{j,d,1}) \big]\big|\\
&\mik & \ave\big[ \ave[F_d \,G_{j,d}'\,|\, x_j\in X_j]^\ell\big]^{1/\ell} \cdot
\ave\big[ (\mathbf{G}_{j,d,\nu}\,\mathbf{G}_{j,d,\psi})^{\ell'}\big]^{1/ \ell'} \nonumber
\end{eqnarray}
where the outer expectation is over the space $\bbx_{f}\times \bbx_{d\cup\{i\}}^\ell$. Note that
\begin{equation} \label{e6.12}
\ave\big[\ave[F_d \,G_{j,d}'\,|\, x_j\in X_j]^\ell\big]=Q_{d\cup\{j\}}
\end{equation}
and, consequently, $Q_{d\cup\{j\}}\meg 0$. Next, define
$\overline{\mathbf{G}}_{j,d, \nu},\overline{\mathbf{G}}_{j,d,\psi}\colon \bbx_{f}\times \bbx_{d\cup\{i\}}^\ell\to \rr$ by
\begin{equation} \label{e6.13}
\overline{\mathbf{G}}_{j,d,\nu}(\bx_{f},\bx_{d\cup\{i\}}^{(0)},\dots,\bx_{d\cup\{i\}}^{(\ell-1)})=
\prod_{\omega\in\Omega_\nu} \boldsymbol{\nu}_{e_j}(\bx_{f},\bx_{d\cup\{i\}}^{(\omega)})
\end{equation}
and
\begin{equation} \label{e6.14}
\overline{\mathbf{G}}_{j,d,\psi}(\bx_{f},\bx_{d\cup\{i\}}^{(0)},\dots,\bx_{d\cup\{i\}}^{(\ell-1)})=
\prod_{\omega\in\Omega_\psi} \boldsymbol{\psi}_{e_j}(\bx_{f},\bx_{d\cup\{i\}}^{(\omega)}).
\end{equation}
By \eqref{e6.8} and \eqref{e6.9}, we see that $0\mik \mathbf{G}_{j,d,\nu}\mik \overline{\mathbf{G}}_{j,d,\nu}$ and
$0\mik \mathbf{G}_{j,d,\varphi}\mik \overline{\mathbf{G}}_{j,d,\varphi}$. On the other hand, by \eqref{e4.3}, we have $1<\ell'< p$.
Therefore, by \eqref{e6.13}, \eqref{e6.14} and parts~(a) and (c.ii) of Proposition \ref{p2.3}, we obtain that
\begin{eqnarray} \label{e6.15}
\ \ \ \ \ \ \ave\big[(\mathbf{G}_{j,d,\nu}\,\mathbf{G}_{j,d,\psi})^{\ell'}\big]^{1/\ell'}\!\!\!\! & \mik &
\!\!\!\ave\big[(\overline{\mathbf{G}}_{j,d,\nu}\,\overline{\mathbf{G}}_{j,d,\psi})^{\ell'}\big]^{1/\ell'} \\
& \mik & \!\!\! \|\boldsymbol{\nu}_{e_j}\|_{\square_{\ell,p}(\bbx_{e_j})}^{|\Omega_{\nu}|}  \cdot
\|\boldsymbol{\psi}_{e_j}\|_{\square_{\ell,p}(\bbx_{e_j})}^{|\Omega_{\psi}|}\mik (C\cdot M)^{\ell^{|d|+1}}. \nonumber
\end{eqnarray}
By \eqref{e6.11}, \eqref{e6.12} and \eqref{e6.15}, we see that $|Q_d |\mik Q_{d\cup\{j\}}^{1/\ell} (C\cdot M)^{\ell^{|d|+1}}$
and the proof of Claim \ref{c6.2} is completed.
\end{proof}
By the above claim, we have
\[ |Q_\emptyset|\mik (C\cdot M)^{(n-1)\ell}\, Q_{e}^{1/\ell^{n-1}}. \]
As we have noted, $Q_\emptyset$ coincides with the quantity appearing in the left-hand side of \eqref{e6.1}. Thus, combining the previous
estimate with \eqref{e4.13} and \eqref{e6.6}, we conclude that \eqref{e6.1} is satisfied, and so the proof of Lemma \ref{l5.1} is completed.
\end{proof}
We proceed with the following lemma which is the second main step of the proof.
\begin{lem} \label{l6.3}
Let $e\in\calh$ and let $i\in [n]$ be the unique integer such that $e~=~[n]~\setminus~\{i\}$. Also let $e'\in\calh\setminus\{e\}$
and $\omega' \in \{0,\dots,\ell-1\}$. For every $e''\in\calh\setminus\{e\}$ and every $\omega\in\{0,\dots,\ell-1\}$ with
$(e'',\omega)\neq (e',\omega')$ let $g_{e''}^{(\omega)}\in L_1(\boldsymbol{X},\calb_{e''},\bmu)$ such that either:
\emph{(i)} $0\mik g_{e''}^{(\omega)}\mik\nu_{e''}$, or \emph{(ii)} $0 \mik g_{e''}^{(\omega)} \mik \psi_{e''}$,
or \emph{(iii)} $0 \mik g_{e''}^{(\omega)} \mik 1$. Then
\begin{equation} \label{e6.16}
\Big| \ave \Big[(\bnu_{e'}-\bpsi_{e'})(\bx_{{e'}\setminus \{i\}},x_{i}^{(\omega')})\!\!
\prod_{\substack{{e''} \in \calh \setminus \{e\} \\ \omega \in \{0,\dots,\ell-1\} \\ ({e''},\omega) \neq ({e'},\omega')}}\!\!
\mathbf{g}_{{e''}}^{(\omega)}(\bx_{{e''}\setminus \{i\}},x_{i}^{(\omega)})\Big] \Big| \mik \eta
\end{equation}
where the expectation is over all $\bx_{e} \in \bbx_{e}$ and $x_{i}^{(0)},\dots,x_{i}^{(\ell-1)}\in X_{i}$. $($Here, $\bx_{e'\setminus \{i\}}$
and $\bx_{e''\setminus \{i\}}$ are the projections of\, $\bx_e$ into $\bbx_{e'\setminus \{i\}}$ and $\bbx_{e''\setminus \{i\}}$ respectively.$)$
\end{lem}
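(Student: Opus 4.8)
The plan is to run the telescoping scheme from the proof of Lemma~\ref{l5.1} (equivalently, of Lemma~\ref{l6.1}), while keeping track of the new feature that the singled-out edge $e'$ now \emph{contains} the distinguished vertex $i$. Indeed, since $\calh={n\choose n-1}$ and $e=[n]\setminus\{i\}$, every $e'\in\calh\setminus\{e\}$ has the form $e'=[n]\setminus\{j\}$ for a unique $j\neq i$, so $i\in e'$ while $e'\setminus\{i\}=[n]\setminus\{i,j\}$ consists of $n-2$ vertices. As the ambient space $\bbx_e\times X_i^\ell$ already carries $\ell$ copies of the $X_i$-coordinate, I would telescope only over the $n-2$ single-copy vertices of $e'$, namely over $e'\setminus\{i\}$, and handle the $X_i$-direction at the very end by a single application of the Gowers--Cauchy--Schwarz inequality (part~(a) of Proposition~\ref{p2.1}).

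Concretely, for every $d\subseteq e'\setminus\{i\}$ I would define $F_d,G_d$ on $\bbx_{e\setminus d}\times\bbx_{d\cup\{i\}}^\ell$ as in \eqref{e6.3}--\eqref{e6.4}, the special factor now being $\bnu_{e'}-\bpsi_{e'}$ evaluated with its $X_i$-coordinate frozen to $x_i^{(\omega')}$, and the single modification that in $G_d$ the factors attached to the edge $e'$ are indexed only by those $\omega$ whose $i$-th coordinate is distinct from $\omega'$ (the $\omega'$-slot being supplied by $F_d$). With $Q_d=\ave[F_dG_d]$ one checks that $Q_\emptyset$ equals the quantity inside the absolute value in \eqref{e6.16}. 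The telescoping step is verbatim as in Claim~\ref{c5.3}/Claim~\ref{c6.2}: for $d\varsubsetneq e'\setminus\{i\}$ and $j'\in(e'\setminus\{i\})\setminus d$, H\"older's inequality separates the factors depending on $x_{j'}$ from those that do not, the latter being exactly the factors carried by the edge $[n]\setminus\{j'\}\in\calh\setminus\{e,e'\}$, whose $\square_{\ell,p}$-norms are $\mik CM$ by condition~(II) and Proposition~\ref{p2.3}; this gives $|Q_d|^{1/\ell^{|d|}}\mik(CM)^\ell\,Q_{d\cup\{j'\}}^{1/\ell^{|d|+1}}$, and iterating over the $n-2$ vertices of $e'\setminus\{i\}$ yields $|Q_\emptyset|\mik(CM)^{(n-2)\ell}\,Q_{e'\setminus\{i\}}^{1/\ell^{n-2}}$.

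The heart of the matter is the terminal quantity. After the telescoping the expectation defining $Q_{e'\setminus\{i\}}$ is over $X_j\times\bbx_{e'}^\ell$; since neither $F_{e'\setminus\{i\}}$ nor the surviving (i.e.\ $e'$-)part of $G_{e'\setminus\{i\}}$ depends on $x_j$, the $X_j$-integration is trivial, and $F_{e'\setminus\{i\}}$ together with that $e'$-part assembles into a product $\prod_{\omega\in\{0,\dots,\ell-1\}^{e'}}h_\omega(\bx_{e'}^{(\omega)})$, where $h_\omega=\bnu_{e'}-\bpsi_{e'}$ when the $i$-th coordinate of $\omega$ equals $\omega'$ (there are $\ell^{n-2}$ such $\omega$) and $h_\omega=\mathbf{g}_{e'}^{(\omega_i)}$ otherwise. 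By part~(a) of Proposition~\ref{p2.1}, $|Q_{e'\setminus\{i\}}|\mik\|\bnu_{e'}-\bpsi_{e'}\|_{\square_\ell(\bbx_{e'})}^{\ell^{n-2}}\prod_{\omega:\omega_i\neq\omega'}\|\mathbf{g}_{e'}^{(\omega_i)}\|_{\square_\ell(\bbx_{e'})}$, and each $\|\mathbf{g}_{e'}^{(\omega_i)}\|_{\square_\ell(\bbx_{e'})}\mik\max\{\|\bnu_{e'}\|_{\square_{\ell,p}(\bbx_{e'})},\|\bpsi_{e'}\|_{\square_{\ell,p}(\bbx_{e'})},1\}\mik CM$ by monotonicity of $\|\cdot\|_{\square_\ell(\bbx_{e'})}$ under $0\mik g\mik f$ together with part~(c.ii) of Proposition~\ref{p2.3} and condition~(II) (here one uses that $\|\bnu_{e'}\|_{\square_{\ell,p}(\bbx_{e'})}<\infty$ forces $\bnu_{e'}\in L_p$). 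Since there are $\ell^{n-2}(\ell-1)$ factors in the product, collecting the powers of $CM$ and invoking \eqref{e4.6} gives $|Q_\emptyset|\mik(CM)^{(n-2)\ell+(\ell-1)-(n-1)\ell}\,\eta=(CM)^{-1}\eta\mik\eta$, which is \eqref{e6.16}.

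The main obstacle I anticipate is not a new idea but the careful bookkeeping in the definition of $F_d,G_d$, and in particular the correct treatment of the distinguished vertex $i$, which --- unlike in Lemma~\ref{l5.1} --- belongs to the singled-out edge. Because $x_i$ is pre-duplicated, the $X_i$-direction of the box norm of $\bnu_{e'}-\bpsi_{e'}$ cannot be built up by telescoping; instead it must be extracted from the \emph{mixed} box obtained after the telescoping (one slot carrying $\bnu_{e'}-\bpsi_{e'}$, the remaining $\ell-1$ carrying arbitrary majorizable functions), which is precisely what the Gowers--Cauchy--Schwarz inequality achieves. The stray vertex $j\notin e'$ is a harmless single-copy bystander that integrates out at the last step.
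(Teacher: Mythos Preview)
Your proposal is correct and follows essentially the same approach as the paper: telescope over subsets $d\subseteq e'\setminus\{i\}$ (an $(n-2)$-element set), obtain $|Q_\emptyset|\mik (CM)^{(n-2)\ell}\,Q_{e'\setminus\{i\}}^{1/\ell^{n-2}}$ via the H\"older step of Claim~\ref{c6.2}, and then bound the terminal quantity $Q_{e'\setminus\{i\}}$ by a single application of the Gowers--Cauchy--Schwarz inequality over $\bbx_{e'}^\ell$, picking up $\|\bnu_{e'}-\bpsi_{e'}\|_{\square_\ell(\bbx_{e'})}^{\ell^{n-2}}$ together with $(\ell-1)\ell^{n-2}$ factors of size at most $CM$. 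Your final arithmetic $(CM)^{(n-2)\ell+(\ell-1)-(n-1)\ell}\eta=(CM)^{-1}\eta\mik\eta$ matches the paper's (which simply majorizes $(n-2)\ell+(\ell-1)$ by $(n-1)\ell$).
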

\begin{proof}
Without loss of generality, and to simplify the exposition, we will assume that $\omega'=0$. For every $d \subseteq e'\setminus\{i\}$ let
$F_d,G_d \colon \bbx_{e\setminus d} \times \bbx^{\ell}_{d \cup \{i\}} \to \rr$ be defined by
\[ F_d(\bx_{e \setminus d},\bx_{d\cup\{i\}}^{(0)},\dots,\bx_{d\cup\{i\}}^{(\ell-1)})=
\prod_{\omega \in \{0,\dots,\ell-1\}^d} (\bnu_{e'}-\bpsi_{e'})(\bx_{e' \setminus (d \cup \{i\})},\bx_d^{(\omega)},x_{i}^{(0)}) \]
and
\[ G_d=\prod_{(e'',\omega)\in \Gamma_d} g_{e'',d,\omega} \]
where: (a) the set $\Gamma_d$ consists of all pairs $(e'',\omega)\in \calh\setminus\{e\}\times \{0,\dots,\ell-1\}^{d\cup\{i\}}$ such that
$d\subseteq e''\setminus\{i\}$ and $(e'',\omega_i)\neq (e',0)$, and (b) for every $(e'',\omega)\in\Gamma_d$ the function
$g_{{{e''}},d,\omega}\colon \bbx_{e\setminus d} \times \bbx^{\ell}_{d \cup \{i\}} \to \rr$ is defined by
\[ g_{{{e''}},d,\omega}(\bx_{e \setminus d },\bx_{d\cup\{i\}}^{(0)},\dots,\bx_{d\cup\{i\}}^{(\ell-1)}) =
\mathbf{g}_{e''}^{(\omega_i)}(\bx_{{{e''}}\setminus (d \cup \{i\})},\bx_{d }^{(\omega_d)}, x_{i}^{(\omega_{i})}). \]
(As before, $\omega_i$ is the $i$-th coordinate of $\omega$ and $\mathbf{g}_{e''}^{(\omega_i)}$ is as in \eqref{e1.5}
for $g_{e''}^{(\omega_i)}$. Moreover, $\bx_{e'\setminus (d\cup\{i\})}$ and $\bx_{e''\setminus (d\cup\{i\})}$ are the projections
of $\bx_{e\setminus d}$ into $\bbx_{e'\setminus (d\cup\{i\})}$ and $\bbx_{e''\setminus (d\cup\{i\})}$ respectively.)

Next, setting $Q_d=\ave[F_d\, G_d]$ and arguing precisely as in the proof of Lemma \ref{l6.1}, we obtain that $Q_{d\cup \{j\}}\meg 0$ and
$|Q_d|^{1/\ell^{|d|}} \mik (C\cdot M)^{\ell}\, Q_{d \cup \{j\}}^{1/\ell^{|d|+1}}$ for every $d\varsubsetneq e'\setminus\{i\}$ and every
$j\in e'\setminus (d\cup\{i\})$. Therefore,
\begin{equation} \label{e6.17}
|Q_\emptyset|\mik (C\cdot M)^{(n-2)\ell}\, Q_{e'\setminus\{i\}}^{1/\ell^{n-2}}.
\end{equation}
Now observe that $|Q_{\emptyset}|$ coincides with the quantity appearing in the left-hand side of \eqref{e6.16}.
On the other hand, we have
\[ Q_{e' \setminus \{i\}}=\ave[\mathbf{F}_{e'\setminus\{i\}}\,\mathbf{G}_{e'\setminus\{i\}}\,|\,
\bx_{e'}^{(0)},\dots,\bx_{e'}^{(\ell-1)}\in\bbx_{e'}] \]
where
\[ \mathbf{F}_{e'\setminus\{i\}}(\bx_{e'}^{(0)},\dots,\bx_{e'}^{(\ell-1)})=\!\!
\prod_{\omega \in \{0,\dots,\ell-1\}^{e' \setminus \{i\}}}\!\!\!\! (\bnu_{e'} - \bpsi_{e'}) (\bx_{e' \setminus \{i\}}^{(\omega)},x_{i}^{(0)}) \]
and
\[ \mathbf{G}_{e'\setminus\{i\}}(\bx_{e'}^{(0)},\dots,\bx_{e'}^{(\ell-1)})=\!\!
\prod_{\omega\in\{0,\dots,\ell-1\}^{e'\setminus\{i\}}\times [\ell-1]^{\{i\}}} \!\!\!\!
\mathbf{g}_{{{e'}}}^{(\omega_{i})}(\bx_{e'\setminus\{i\} }^{(\omega_{e'\setminus\{i\}})},x_{i}^{(\omega_{i})}). \]
(The arguments of the functions in the definitions of $\mathbf{F}_{e'\setminus\{i\}}$ and $\mathbf{G}_{e'\setminus\{i\}}$
follow from our previous conventions, mutatis mutandis.) Thus, by part (a) of Proposition \ref{p2.1}, we obtain that
\[ Q_{e' \setminus \{i\}}\mik \|\bnu_{e'}-\bpsi_{e'}\|_{\square_{\ell}(\bbx_{e'})}^{\ell^{n-2}} \cdot \,
\prod_{\omega=1}^{\ell-1} \|\mathbf{g}_{e'}^{(\omega)}\|_{\square_{\ell}(\bbx_{e'})}^{\ell^{n-2}} \]
and consequently, by \eqref{e6.17},
\begin{equation} \label{e6.18}
|Q_\emptyset|\mik (C\cdot M)^{(n-2)\ell} \cdot \|\bnu_{e'}-\bpsi_{e'}\|_{\square_{\ell}(\bbx_{e'})} \cdot \,
\prod_{\omega=1}^{\ell-1} \|\mathbf{g}_{e'}^{(\omega)}\|_{\square_{\ell}(\bbx_{e'})}.
\end{equation}
By part (c.ii) of Proposition \ref{p2.3}, for every $\omega\in [\ell-1]$ we have
\[ \|\mathbf{g}_{e'}^{(\omega)}\|_{\square_{\ell}(\bbx_{e'})}\mik \|\mathbf{g}_{e'}^{(\omega)}\|_{\square_{\ell, 1}(\bbx_{e'})}
\mik \|\mathbf{g}_{e'}^{(\omega)}\|_{\square_{\ell,p}(\bbx_{e'})}. \]
Hence, by \eqref{e6.18} and condition (II),
\begin{eqnarray*}
|Q_\emptyset| & \mik & (C\cdot M)^{(n-2)\ell} \cdot \|\bnu_{e'}-\bpsi_{e'}\|_{\square_{\ell}(\bbx_{e'})}\cdot (C\cdot M)^{\ell-1} \\
& \mik & (C\cdot M)^{(n-1)\ell}\cdot \|\bnu_{e'} - \bpsi_{e'}\|_{\square_{\ell}(\bbx_{e'})} \mik \eta
\end{eqnarray*}
and the proof of Lemma \ref{l6.3} is completed.
\end{proof}
We are now in a position to complete the proof of the theorem. Recall that we need to show that the family $\langle \nu_e:e\in\calh\rangle$
satisfies conditions (C1)--(C3) in Definition \ref{d4.1} for the constants $C$ and $\eta'= n\ell\eta$. For condition (C1) let
$\calg\subseteq \calh\setminus\{e\}$ be nonempty. Set $m=|\calg|$ and let $e'_1,\dots, e'_m$ be an enumeration of $\calg$. Notice that
\begin{eqnarray} \label{e6.19}
\,\, \Big|\ave\Big[\prod_{e'\in\calg} \nu_{e'}\Big]-\ave\Big[\prod_{e'\in\calg}\psi_{e'}\Big]\Big|\!\!\! & \mik &
\!\sum_{j=1}^{m}\, \Big|\ave\Big[\prod_{k<j} \psi_{e'_k} (\nu_{e'_j}-\psi_{e'_j}) \prod_{k>j} \nu_{e'_k}\Big]\Big| \\
& \stackrel{\eqref{e6.1}}{\mik} & \!(n-1)\,\eta \nonumber
\end{eqnarray}
and so, by condition (I), we obtain that
\[ \ave\Big[\prod_{e' \in \calg} \nu_{e'}\Big]\meg \ave\Big[\prod_{e' \in \calg } \psi_{e'}\Big]-(n-1)\cdot\eta\meg 1-\eta'.\]
That is, condition (C1) is satisfied. Condition (C2.a) follows arguing precisely as in the proof of Theorem \ref{t4.2},
while condition (C2.b) is an immediate consequence of \eqref{e6.1}. Finally, for condition (C3) fix $e\in\calh$ and let $i\in [n]$
be the unique integer such that $e=[n]\setminus\{i\}\in\calh$. Also let $\calg\subseteq\calh\setminus \{e\}$ be nonempty.
By the choice of~$\eta'$, it is enough to show that
\begin{equation} \label{e6.20}
\ave[\boldsymbol{\nu}_{e,\calg}^{\ell}]\mik C+ (|\calg|\cdot \ell +1)\,\eta.
 \end{equation}
To this end, set
\[ \Delta\coloneqq \Big|\ave\Big[\prod_{\omega=0}^{\ell-1} \prod_{e' \in \calg} \bnu_{e'}(\bx_{e' \setminus \{i\}},x_i^{(\omega)})\Big]
-\ave\Big[ \prod_{\omega=0}^{\ell-1} \prod_{e' \in \calg} \bpsi_{e'}(\bx_{e' \setminus \{i\}},x_i^{(\omega)})\Big]\Big| \]
(both expectations are over the space $\bbx_e\times X_i^\ell$) and note that, by condition (I),
\[ \ave[\boldsymbol{\nu}_{e,\calg}^{\ell}]\mik\Delta+C+\eta. \]
Next, by enumerating the set $\calg\times \{0,\dots,\ell-1\}$ and applying a telescoping argument as in \eqref{e6.19},
we see that $\Delta$ is bounded by a sum of $|\calg|\cdot \ell$ terms each of which has the form of the quantity appearing
in the left-hand side of \eqref{e6.16}. Therefore, by Lemma~\ref{l6.3}, we conclude that \eqref{e6.20} is satisfied, and so
the entire proof of Theorem~\ref{t4.3} is completed.

%-------------------------------------------------------------------%
%                           Bibliography                            %
%-------------------------------------------------------------------%

\end{document}